\newtheorem{thm}{Theorem}[section]
\newtheorem{prop}[thm]{Proposition}
\newtheorem{lem}[thm]{Lemma}
\theoremstyle{definition}
\theoremstyle{remark}
\newtheorem{remark}[thm]{Remark}
\numberwithin{equation}{section}
\newcommand{\Q}{\mathbb{Q}}
\newcommand{\bP}{\mathbf{P}}
\newcommand{\G}{\mathfrak{G}}
\newcommand{\E}{\mathcal{E}}
\numberwithin{equation}{subsection}
\begin{document}

% Title
\title[Euler Characteristics of the Generalized Kloosterman sheaves]{Euler Characteristics of the Generalized Kloosterman sheaves for symplectic and orthogonal groups}
\author{Yu Fu}
\address{Department of Mathematics\\
California institute of technology\\
Pasadena, CA, 91125}
\email{yufu@caltech.edu}

\author{Miao (Pam) Gu}
\address{Department of Mathematics\\
University of Michigan\\
Ann Arbor, MI 48109-1043}
\email{pmgu@umich.edu}

\subjclass[2020]{Primary: 14D24 ; 22E57 Secondary: 11F70 ; 11L05}
\keywords{Kloosterman sheaves, Swan conductor, Langlands correspondence, epipelegic representations}

\begin{abstract}
We study the monodromy of certain $\ell$-adic local systems attached to the generalized Kloosterman sheaves constructed by Yun and calculate their Euler characteristics under standard representations in the cases of symplectic and split/quasi-split orthogonal groups. This provides evidence for the conjectural description of their Swan conductors at $\infty$ which is predicted by Reeder-Yu on the Langlands parameters attached to the epipelagic representations. 
\end{abstract}

\maketitle

\section{Introduction}

Let $k=\mathbb{F}_q$ be a finite field of characteristic $p$ and $F=k(t)$ be the rational function field over $k$. Places of F are in natural bijections with closed points of $X:= \mathbb{P}_{k}^1$. Let $G$ be a reductive quasi-split group over $F$. Let $K=F_{\infty}$ be the local field of $F$ at $\infty$. In \cite{ReederYu}, Reeder and Yu constructed a family of supercuspidal representations of $p$-adic groups called
\textit{epipelagic representations}, and later Yun \cite{YunEpipe} constructed automorphic representations for quasi-split groups over $F$, whose local components at $\infty$ are epipelagic representation. They also constructed the associated Galois representations from an admissible parahoric subgroup $\mathbf{P} \subset G(K)$ under the geometric Langlands correspondence. The Galois representations give ${}^{L}G$-local systems which generalizes the Kloosterman sheaves constructed earlier by Heinloth, Ngo and Yun \cite{HNY}. 

In this paper, we compute Euler characteristics of the $\ell$-adic local systems attached to the generalized Kloosterman sheaf and standard representation of $\widehat{G}$ in the cases that $G$ is some symplectic group or orthogonal group. This is equivalent to calculating the Swan conductor at $\infty$ and gives evidence for the prediction made by Reeder and Yu about the Langlands parameters of epipelagic representations. 

\subsection{The Kloosterman sheaves} \label{kloointro}

In \cite{HNY}, generalizing Deligne's and Katz's constructions of the Kloosterman sheaves attached to families of Kloosterman sums, Heinloth, Ng$\widehat{\mathrm{o}}$, and Yun constructed Kloosterman sheaves $\mathrm{Kl}_{\widehat{G}, \mathbf{I}}(\mathcal{K},\phi)$ for reductive groups $G$ and $\mathbf{I}$ the Iwahoric subgroup of $G$, which are $\widehat{G}$-local systems. The Kloosterman sheaves depend on $\mathcal{K}$ a character sheaf and $\phi$ a stable linear functional, which corresponds to simple supercuspidal representations of Gross-Reeder \cite{GrossReeder} on the automorphic side. 

Generalizing the construction in \cite{HNY}, Yun constructed in \cite{YunEpipe} a family of generalized Kloosterman sheaves  $\mathrm{Kl}_{\widehat{G},\mathbf{P}}(\mathcal{K},\phi)$ associated with parahoric subgroups $\mathbf{P}$. We recall some details of the construction below. 

We fix a prime $\ell$ different from $\mathrm{char}(k)$. We consider constructible $\overline{\mathbb{Q}}_{\ell}$-complexes over various algebraic stacks over $k$ or $\overline{k}$. Let $(L_{\mathbf{P}}, V_{\mathbf{P}})$ be $\theta$-groups studied by \cite{Vinberg}. When $\mathrm{char}(k)$ is large, let $\mathbf{P}:= \mathbf{P}_m$, where $m$ is a regular elliptic number uniquely associated with the admissible parahoric $\mathbf{P}$ (see \Cref{regular elliptic number}). 

Let $\pi(\chi,\phi)$ be a cuspidal automorphic representation whose local component at $\infty$ is an epipelegic supercuspidal representation as realized in \cite{YunEpipe}, where $\chi$ is a character of $\widetilde{L_\mathbf{P}}$ and $\phi$ is a stable linear functional $V_\mathbf{P} \to k$.

By the global geometric Langlands correspondence, it is associated with the Kloosterman sheaf $\mathrm{Kl}_{{}^{L}G,\mathbf{P}}(\mathcal{K},\phi)$ (which is a ${}^{L}G$-local system), where $\mathcal{K}$ is the character sheaf associated with $\chi$ under sheaf-to-function dictionary. 
Let $X^{\circ}=\mathbb{P}_k^1-\{0,\infty\}$. The local system attached to the generalized Kloosterman sheaves and stardard representation $V$ of $\widehat{G}$ can be interpreted using the following diagram 
\begin{equation*}
\begin{tikzcd}
 & X^{\circ} \times \mathfrak{G}_{\leq \lambda} \arrow{dl}[swap]{f_{\phi}} \arrow[dr, "\pi"] \\
\mathbb{A}^1  && X^{\circ}
\end{tikzcd}.
\end{equation*}
Here $\mathfrak{G}_{\leq \lambda}$ is the variety corresponding to the dominant coweight $\lambda$ associated with $V$ and $\pi$ is the projection map. When $G=\mathrm{Sp}_{2n}$, $\mathfrak{G}_{\leq \lambda} = \mathrm{Sym}^2_{\leq 1}(M)$, where $M$ is $2n$-dimensinal symplectic space. The $\overline{\mathbb{Q}}_{\ell}$-local system $\mathrm{Kl}^{\mathrm{St}}_{\widehat{G},\mathbf{P}}(\mathbf{1},\phi)$ asssoicated to the Kloosterman sheaves $\mathrm{Kl}_{\widehat{G},\mathbf{P}}(\mathbf{1},\phi)$ (the $\widehat{G}$-local system descended from $\mathrm{Kl}_{^{L}G,\mathbf{P}}(\mathcal{K},\phi)$) with the trivial character sheaf $\mathcal{K} = \mathbf{1}$ and the standard representation of $G$ are 
\[
\mathrm{Kl}^{\mathrm{St}}_{\widehat{G},\mathbf{P}}(\mathbf{1},\phi) \cong \pi ! f_{\phi}^{*} \mathrm{AS}_{\psi}[2n-1]\left(\frac{2 n-1}{2}\right).
\]
When $G$ is an orthogonal group, let $(M,q)$ be the associated quadratic spaces of dimension $2n$ or $2n+1$. The variety $\mathfrak{G}_{\leq \lambda} \subset \mathbb{P}(M)$ is defined by $Q(q)-\cup Q_{[i,m-i]}$, where $Q(q)$ is the quadric defined by $q=0$ and $Q_{[i,m-i]}$ is the quadric defined by $q=0$ when restricted to $M_{i}\oplus\cdots\oplus M_{m-i}$. We have
\[
\mathrm{Kl}^{\mathrm{St}}_{\widehat{G},\mathbf{P}}(\mathbf{1},\phi) \cong \pi ! f_{\phi}^{*} \mathrm{AS}_{\psi}[\mathrm{dim} M-2]\left(\frac{\mathrm{dim} M -2 }{2}\right).
\]

\subsection{Main Theorem}
In this paper we prove the following main theorems:
\begin{thm}\label{Echar symplectic}
	When $G=\mathrm{Sp}_{2n}$, we have 
	$$ - \chi_{c}(\widetilde{X}^{\circ}, \mathrm{Kl}^{\operatorname{st}}_{\widehat{G}, \mathbf{P}_{m}}(\chi , \phi))=  d, $$
	where $d=\frac{2n}{m}$.
\end{thm}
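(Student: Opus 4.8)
The plan is to evaluate the Euler characteristic from the geometric realization of $\mathrm{Kl}^{\operatorname{st}}_{\widehat G,\mathbf P_m}$ together with the Grothendieck--Ogg--Shafarevich formula. From the construction, $\mathrm{Kl}^{\operatorname{st}}_{\widehat G,\mathbf P_m}(\chi,\phi)$ is a local system on $\widetilde X^{\circ}$, lisse and tamely ramified above $0$, and changing the character sheaf $\mathcal K$ tensors it by a tame rank-one local system, which alters neither the rank nor the breaks and hence not $\chi_c$; so we may take $\mathcal K=\mathbf 1$ and use
\[
\mathrm{Kl}^{\operatorname{st}}_{\widehat G,\mathbf P_m}(\mathbf 1,\phi)\;\cong\;\pi_{!}\,f_{\phi}^{*}\mathrm{AS}_{\psi}[2n-1]\!\left(\tfrac{2n-1}{2}\right).
\]
The Tate twist does not affect Euler characteristics and the shift $[2n-1]$ contributes the sign $(-1)^{2n-1}=-1$, so by the additivity of $\chi_c$ along $\pi$,
\[
-\,\chi_c\bigl(\widetilde X^{\circ},\,\mathrm{Kl}^{\operatorname{st}}_{\widehat G,\mathbf P_m}(\chi,\phi)\bigr)
\;=\;\chi_c\bigl(\widetilde X^{\circ}\times\mathfrak G_{\le\lambda},\,f_{\phi}^{*}\mathrm{AS}_{\psi}\bigr),
\]
and the problem is to show the right-hand side equals $d=2n/m$. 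As a consistency check, feeding this back into Grothendieck--Ogg--Shafarevich on $\widetilde X^{\circ}$ — where the base has vanishing compactly supported Euler characteristic and the sheaf is tame above $0$ — identifies $d$ with the Swan conductor of $\mathrm{Kl}^{\operatorname{st}}$ at $\infty$, the quantity predicted by Reeder--Yu.

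Next I would make $\mathfrak G_{\le\lambda}$ and $f_{\phi}$ explicit. For $G=\mathrm{Sp}_{2n}$, $\mathfrak G_{\le\lambda}=\mathrm{Sym}^2_{\le1}(M)$ is the variety of rank-$\le1$ symmetric tensors, i.e. the image of the quadratic Veronese-type map $v\mapsto v\otimes v$ out of $M$. The admissible parahoric $\mathbf P_m$ equips $M$ with a $\mathbb Z/m$-grading $M=\bigoplus_{i\in\mathbb Z/m}M_i$ coming from a regular elliptic cocharacter, under which the $i$-th graded piece scales by a fixed power of the coordinate $t$ on $\widetilde X^{\circ}$. Writing $v=\sum_i v_i$ and pairing $v\otimes v$ against the stable functional $\phi\in V_{\mathbf P}^{\vee}$, the function $f_{\phi}$ on $\widetilde X^{\circ}\times\mathrm{Sym}^2_{\le1}(M)$ becomes an explicit sum of the finitely many graded components of $v\otimes v$ that $\phi$ sees, each weighted by a monomial in $t^{\pm1}$; this exhibits $f_{\phi}^{*}\mathrm{AS}_{\psi}$ as a concrete family of exponential sums in the coordinates of $v$ and in $t$.

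I would then compute $\chi_c\bigl(\widetilde X^{\circ}\times\mathrm{Sym}^2_{\le1}(M),f_{\phi}^{*}\mathrm{AS}_{\psi}\bigr)$ by a stratification, in the same spirit as the elementary inclusion--exclusion computation $\chi_c(\mathbb G_m^{\,r},(\textstyle\sum_i x_i)^{*}\mathrm{AS}_{\psi})=(-1)^{r}$ underlying the classical Kloosterman case. Stratify the source by the vanishing pattern of the graded components $v_i$ (and by the rank of $v\otimes v$). On any stratum carrying a free $\mathbb G_a$- or $\mathbb G_m$-action along which $f_{\phi}$ restricts to a nonconstant affine, respectively linear, function, the contribution to $\chi_c$ vanishes, since the fibres are copies of $\mathbb A^1$ or $\mathbb G_m$ and pair trivially with a nontrivial Artin--Schreier pullback; the stability of $\phi$ is exactly the input that forces the graded quadratic forms $v\mapsto\langle\phi,(v\otimes v)_{\bullet}\rangle$ to be nondegenerate in enough directions so that such an action exists on all but a distinguished locus. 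On that distinguished locus the family degenerates to a nondegenerate configuration, and a cyclic $\mu_m$-symmetry permuting the graded pieces organizes the surviving contributions into $\mu_m$-orbits; there are $2n/m=d$ of them, each contributing $1$ with a common sign, and assembling the strata gives $-\chi_c=d$.

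The main obstacle is carrying out the middle two steps rigorously for a general admissible $\mathbf P_m$, not only the Iwahori case: pinning down the precise form of $f_{\phi}$, proving that $\chi_c$ genuinely vanishes on all of the "generic" strata, and showing the leftover contribution is exactly $d$. This rests on a careful use of the stability of $\phi$ to pin down the degeneracy loci of the graded quadratic forms cut out of $v\otimes v$, on controlling the boundary strata where some component $v_i$ vanishes and the behaviour as $t\to0,\infty$, and on the combinatorics relating the $\mathbb Z/m$-grading, the symmetric-square structure of $\mathrm{Sym}^2_{\le1}(M)$, and the quantity $2n/m$ — it is this symmetric-square structure, in place of the quadric $\mathfrak G_{\le\lambda}\subset\mathbb P(M)$ of the orthogonal case, that produces the particular value $d=2n/m$.
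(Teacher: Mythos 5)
Your reduction to $\chi_c\bigl(\widetilde X^{\circ}\times\mathfrak G_{\le\lambda},f_{\phi}^{*}\mathrm{AS}_{\psi}\bigr)$ (up to the sign from the shift $[2n-1]$, and discarding $\chi$ since twisting by a tame rank-one sheaf does not change $\chi_c$) is correct and matches the paper's starting point. But the proposal stops exactly where the proof has to begin, and the two places you leave open are not routine. First, your guessed shape of $f_{\phi}$ is wrong: it is not a sum of graded components of $v\otimes v$ weighted by monomials in $t^{\pm1}$. In Yun's explicit formula only the term $\omega(\phi_m v_m,u_m)$ carries the coordinate $x$; the remaining terms are $\omega(\phi_i v_i,u_{m-i})/(1-\gamma_1-\cdots-\gamma_i)$, with denominators built from the functions $\gamma_i(u\cdot v)=\omega(v_{m+1-i},u_i)$. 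Consequently $\mathfrak G_{\le\lambda}$ is not all of $\mathrm{Sym}^2_{\le1}(M)$ but the open complement of the divisors $\Gamma_i=\{\gamma_1+\cdots+\gamma_i=1\}$, and any stratification argument has to account for these removed divisors. Your proposed strata (vanishing patterns of the $v_i$) and the vanishing mechanism (free $\mathbb G_a$ or $\mathbb G_m$ actions on generic strata) are therefore set up on the wrong space and against the wrong function.

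Second, the final count is not produced by a $\mu_m$-orbit argument. What the paper actually does is integrate out the $t$-direction first (splitting the answer into an open piece $U$ and the divisor $W=\{\omega(\phi_m v_m,u_m)=0\}$), then run an induction over $i$ that eliminates the pairs $(M_{i+1},M_{m-i-2})$ one at a time using the affine-fibration and Artin--Schreier vanishing arguments you gesture at; this collapses everything onto $\mathrm{Sym}^2_{\le1}(M_1\oplus M_m)$ and reduces the theorem to four concrete Euler characteristics of (intersections of) affine quadrics, namely $\chi_c(\mathrm{Sym}^2_{\le1}(M_1\oplus M_m))=1$, $\chi_c(\Gamma_1)=0$, $\chi_c(\Gamma_1')=1$ and $\chi_c(\Gamma_1\cap\Gamma_1')=-d$, the last computed via a fibration over $\mathbb P(M_1)\cong\mathbb P^{d-1}$ --- this is where $d$ enters, not through counting $\mu_m$-orbits of graded pieces. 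The case $m=2$ also needs a separate treatment (projectivizing and comparing with the affine pieces), which your outline does not anticipate. So the proposal is a reasonable plan in spirit but has a genuine gap: the explicit form of $f_{\phi}$, the inductive elimination of variables, and the terminal quadric computations are all missing, and the heuristic you offer for the final value $d$ does not survive contact with the actual geometry.
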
 

\begin{thm}\label{Echar orthogonal}
	When $G$ are split or quasi-split orthogonal groups whose root systems are of either type $B_n$, $D_n$, or $^{2}D_n$, we have 
	$$
-\chi_{c}(\widetilde{X}^{\circ}, \mathrm{Kl}^{\operatorname{st}}_{\widehat{G}, \mathbf{P}_{m}}(\chi , \phi)) = \begin{cases} 2d &  B_n, \\ 2d &  D_n, \text{ } \omega_1 \text{ is non-degenerate,} \\ 2(d+1) &  D_n, \text{ } \omega_1 \text{ is degenerate,} \\ 2d & ^{2}D_n,\text{ } \omega_1 \text{ is non-degenerate,} \\ 2(d+1) & ^{2}D_n,\text{ } \omega_1 \text{ is degenerate.}\end{cases}.
$$
Here $d=\frac{2n}{m}$ when $G$ is of type $B_n$, $d | n$ is even for type $D_n$, and $d | n$ is odd for type $^2D_n$; $d=\frac{2(n-1)}{d}$ is odd when $G$ is of type $D_n$ and even for type $^2D_n$. 
\end{thm}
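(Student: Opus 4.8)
The plan is to compute $-\chi_c(\widetilde{X}^{\circ}, \mathrm{Kl}^{\operatorname{st}}_{\widehat{G}, \mathbf{P}_m}(\chi,\phi))$ directly from the geometric model recalled in \Cref{kloointro}, following the same strategy as for \Cref{Echar symplectic} but now over the quadric. Since $\chi_c$ is insensitive to Tate twists and changes only by the sign $(-1)^{\dim M - 2}$ under the shift $[\dim M - 2]$, and since $\chi_c$ commutes with $\pi_!$, the isomorphism $\mathrm{Kl}^{\operatorname{st}}_{\widehat{G},\mathbf{P}_m}(\mathbf{1},\phi) \cong \pi_! f_\phi^{*}\mathrm{AS}_\psi[\dim M - 2](\tfrac{\dim M - 2}{2})$ reduces the problem to evaluating $\chi_c$ of the rank-one Artin–Schreier sheaf $f_\phi^{*}\mathrm{AS}_\psi$ on the total space $\widetilde{X}^{\circ} \times \mathfrak{G}_{\leq\lambda}$, where $\mathfrak{G}_{\leq\lambda} = Q(q)\setminus\bigcup_i Q_{[i,m-i]} \subset \mathbb{P}(M)$. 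The first real step is to write $f_\phi$ explicitly in coordinates adapted to the $\mathbb{Z}/m$-grading $M = \bigoplus_{i\in\mathbb{Z}/m} M_i$: it is a ratio of bidegree-$(1,1)$ nearest-neighbour forms $\omega_i(v_i,v_{i+1})$ built from $\phi$ and the quadratic form $q$, with exactly one term — the one that wraps around the cycle — carrying the coordinate $t$ of $\widetilde{X}^{\circ}$, and with denominator supported on $\bigcup_i Q_{[i,m-i]}$ so that $f_\phi$ is regular on $\mathfrak{G}_{\leq\lambda}$.

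Second, I would integrate out the $\widetilde{X}^{\circ}$-direction. For fixed $[v]\in\mathfrak{G}_{\leq\lambda}$ the restriction of $f_\phi$ to $\widetilde{X}^{\circ}$ is affine-linear in $t$, of the form $\alpha([v]) + t\,\beta([v])$ with $\beta$ the wrap-around coefficient. On the open locus $\{\beta\neq 0\}$ the map $t\mapsto \alpha([v])+t\beta([v])$ identifies $\widetilde{X}^{\circ}\cong\mathbb{G}_m$ with $\mathbb{A}^1$ minus a point, so $R^i\pi_!$ vanishes except $R^1\pi_! = \alpha^{*}\mathrm{AS}_\psi$ there; on $\{\beta = 0\}$ the function is $t$-independent and the fibrewise $\chi_c$ vanishes because $\chi_c(\mathbb{G}_m)=0$. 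Hence, with $\pi$ the projection to $\mathfrak{G}_{\leq\lambda}$,
\[
\chi_c(\widetilde{X}^{\circ}\times\mathfrak{G}_{\leq\lambda},\, f_\phi^{*}\mathrm{AS}_\psi) \;=\; -\,\chi_c\bigl(\{\beta\neq 0\}\subset\mathfrak{G}_{\leq\lambda},\, \alpha^{*}\mathrm{AS}_\psi\bigr),
\]
and the computation is now entirely about a twisted Artin–Schreier Euler characteristic on an open subvariety of the quadric $Q(q)$.

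Third, I would stratify $\{\beta\neq 0\}$ according to which of the remaining nearest-neighbour forms $\omega_i(v_i,v_{i+1})$ vanish and according to the position of $[v]$ with respect to the chain of subspaces $M_i\oplus\cdots\oplus M_{m-i}$ cutting out the $Q_{[i,m-i]}$, choosing the strata fine enough that the relevant higher direct images are locally constant. On each stratum the surviving part of $\alpha$ is again affine-linear in a suitable linear coordinate on one of the graded pieces $M_j$, so the dichotomy "$\mathbb{A}^{d_j}$ with $\chi_c(\mathbb{A}^{d_j},\mathrm{linear}^{*}\mathrm{AS}_\psi)=0$ versus $\mathbb{A}^{d_j}$ with the trivial function contributing $1$" telescopes: each peeling step contributes a controlled factor and lowers the dimension, and the process terminates at a core stratum which is (an open part of) a smooth affine quadric attached to the innermost graded pieces. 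Its Euler characteristic — hence the final count — is determined by the dimension and split type of that quadric, which is where the numerology $d=\frac{2n}{m}$ (type $B_n$), $d\mid n$ with $d$ even (type $D_n$), $d\mid n$ with $d$ odd (type $^{2}D_n$), the parity constraints relating $d$ and $m$, and the split/quasi-split alternative all enter. When the form $\omega_1$ attached to $\phi$ is non-degenerate the core quadric has the expected dimension and the tally is $2d$; when $\omega_1$ is degenerate its radical produces one extra line's worth of strata and shifts the answer to $2(d+1)$, as stated. Throughout, one tracks the signs $(-1)^{\dim M - 2}$ and the sign from each peeling step, and checks $\mathrm{Sw}_0 = 0$ and that $\widetilde{X}^{\circ}$ is a form of $\mathbb{G}_m$, so that $-\chi_c$ indeed equals $\mathrm{Sw}_\infty$ and matches the value predicted by Reeder–Yu.

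The main obstacle is the bookkeeping of this stratification. In contrast with \Cref{Echar symplectic}, where $\mathfrak{G}_{\leq\lambda}$ is essentially a projectivized cone and the strata are products, here the quadric relation $q=0$ couples $M_i$ with $M_{-i}$, the removed sub-quadrics $Q_{[i,m-i]}$ are nested, and one must verify that no stratum is accidentally empty or of the wrong dimension — in particular that the degenerate-$\omega_1$ locus contributes \emph{exactly} one extra unit and not more, and that the split/quasi-split distinction for the middle graded piece $M_{m/2}$ (equivalently, the parity of the relevant quadratic space) only moves the answer within the stated cases. A secondary, more routine point is to justify that the fibrewise Euler-characteristic computations glue, i.e.\ that $R\pi_!$ is computed stratum by stratum and $\chi_c$ of the total space equals the sum over strata of (fibre $\chi_c$) times ($\chi_c$ of the stratum); this follows from constructibility of $R\pi_!$ but requires the strata to be refined so that $\alpha$, $\beta$ and, inductively, their descendants behave uniformly on each.
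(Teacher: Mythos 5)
Your plan follows essentially the same route as the paper's proof: the same explicit model $\pi_! f_\phi^{*}\mathrm{AS}_\psi$ on $\widetilde{X}^{\circ}\times\mathfrak{G}_{\le\lambda}$, the same integration over the rotation $\mathbb{G}_m$ isolating the locus where the coefficient of $x$ vanishes, the same one-graded-piece-at-a-time peeling reducing everything to (intersections of) quadrics in $\mathbb{P}(M_0\oplus M_\ell)$, and the same finish via Euler characteristics of projective quadrics. The only substantive shortfall is that you assert rather than compute those terminal quadric Euler characteristics, which is exactly where the whole case analysis (the parities of $\dim M_0$ and $\dim M_\ell$, type $B_n$ versus $D_n$ versus ${}^2D_n$, degenerate versus non-degenerate $\omega_1$, and the sign $(-1)^{\dim M-2}$) is decided; that step still has to be carried out as in \Cref{Q_ell}, \Cref{three quadrics} and the accompanying table.
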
 

Using the Grothendieck-Ogg-Shafarevich formula, $-\chi_{c}(\widetilde{X}^{\circ}, \mathrm{Kl}^{\operatorname{st}}_{\widehat{G}, \mathbf{P}_{m}}(\chi , \phi))$ is equivalent to the Swan conductor of $\mathrm{Kl}^{\operatorname{st}}_{\widehat{G}, \mathbf{P}_{m}}(\chi , \phi)$ at $\infty$. Our main theorems then provide evidence for Reeder-Yu's conjecture \cite{ReederYu} about the Swan conductor of the Langlands parameter of epipelagic representations of $G$ in the function field case. 

\begin{remark}
As mentioned in \Cref{kloointro}, when $\mathbf{P}=\mathbf{I}$, the local system matches with the local system in \cite{HNY}. In that case, the corresponding regular elliptic number $m$ is the twisted Coxeter number (see \Cref{regular elliptic number}). It was shown in \cite[Corollary 5.1]{HNY} that $\mathrm{Swan}_{\infty}(\mathrm{Kl}^{V_{\theta^{\vee}}}_{\widehat{G}, \mathbf{I}}(\mathcal{K},\phi)) = r_s(\widehat{G})$, where $V_{\theta^{\vee}}$ is the unique quasi-minuscule representation of $\widehat{G}$ whose nonzero weights consist of short roots of $\widehat{\mathfrak{g}}$ and $r_s(\widehat{G})$ is the number of short simple roots of $\widehat{G}$. When $G=\mathrm{Sp}_{2n}$, the unique quasi-minuscule representation of $\widehat{G}$ is the standard representation of $\widehat{G}$ (see \cite{Ses78} for reference). In \Cref{Echar symplectic}, when $d=1$, $m$ is the twisted Coxeter number, and \Cref{Echar symplectic} matches up with this result. 
\end{remark}

\subsection{Outline of the paper}
We set up the notations and review details about the epipelagic representations and the construction of the generalized Kloosterman sheaves in \Cref{Sec2}. We discuss the relation between the Euler characteristics of the local system and the Swan conductor of the conjectural Langlands parameter of epipelagic representations in \Cref{Sec3}. In \Cref{Sec4} and \Cref{Sec5}, we prove the main theorems in the symplectic case and orthogonal case, respectively.

\section*{Acknowledgements}
This project was suggested to us by Zhiwei Yun in Arizona Winter School 2022. We would like to thank Zhiwei Yun for his continued support during our work on the project. We would like to thank the organizers of Arizona Winter School 2022 for an excellent event, and for giving us the opportunity to work on this project. The first author would like to thank Yi Ni for helpful discussions. The second author would like to thank Robert Cass for answering many questions on derived categories, and Charlotte Chan, Tasho Kaletha, Jianqiao Xia, and Zeyu Wang for helpful discussions.

\section{Preliminaries} \label{Sec2}

In this section, we recall some details from \cite{YunEpipe} about the realization of epipelagic representations as a local component of an automorphic representations of $G(\mathbb{A}_F)$ and the construction of the Kloosterman sheaves.

\subsection{The absolute group data and the Langlands dual group}

Let $\mathbb{G}$ be a a split reductive group over $k$ whose derived group is almost simple. Fix a pinning $\dagger = (\mathbb{B}, \mathbb{T}, \dots)$ of $\mathbb{G}$ for $\mathbb{B}$ a Borel subgroup and $\mathbb{T} \subset \mathbb{B}$ a split torus. Let $\mathbb{W}=N_{\mathbb{G}}(\mathbb{T})/\mathbb{T}$ be the Weyl group of $\mathbb{G}$ and let $\sigma \in \mathrm{Aut}^{\dagger}(\mathbb{G})$ be the image of $1$. Fix a cyclic subgroup $\mathbb{Z}/e\mathbb{Z}$ of the pinned automorphism group of $\mathbb{G}$. We assume $\mathrm{char}(k)$ is prime to $e$ and $k^{\times}$ contains $e$th roots of unity $\mu_e$.  

Fix a $\mu_e$-cover $\widetilde{X} \to X$ which is totally ramified over $0$ and $\infty$. $\widetilde{X}$ is isomorphic to $\mathbb{P}_k^1$ with affine coordinate $t^{1/e}$. We denote  
\[
X^{\circ} := X - \{0,\infty\}; \widetilde{X}^{\circ}:= \widetilde{X} - \{0,\infty\}.
\]

Let $\widehat{G}$ be the reductive group over $\overline{Q}_{\ell}$ whose root system is dual to that of $\mathbb{G}$. We define the Langlands dual group ${}^{L}G$ to be ${}^{L}G = \widehat{G} \rtimes \mu_{e}$.

\subsection{Admissible parahoric subgroups and regular elliptic numbers}\label{regular elliptic number}

We denote $K=F_{\infty}$ as the local field of $F$ at $\infty$. Let $\mathbf{P} \subset G(K)$ be a standard parahoric subgroup containing the standard Iwahori subgroup $\mathbf{I}$. Let $\mathfrak{A}$ be the apartment in the building of $G(K)$ corresponding to the maximal split torus of $G(K)$. We denote $\Psi_{\mathrm{aff}}$ as the set of affine roots of $G(K)$, which are certain affine functions on $\mathfrak{A}$.  $\mathbf{P}$ determines a facet $\mathfrak{F}_\mathbf{P}$ in $\mathfrak{A}$, and let $\xi \in \mathfrak{A}$ denotes its barycenter. Let $m=m(\mathbf{P})$ be the smallest positive integer such that $\alpha(\xi) \in \frac{1}{m}\mathbb{Z}$ for all affine roots $\alpha \in \Psi_{\mathrm{aff}}$ (see \cite[\S 3.3]{ReederYu}). 

Let $\mathbf{P}  \supset \mathbf{P}^{+} \supset \mathbf{P}^{++}$ be the first three steps in the Moy-Prasad filtration of $\mathbf{P}$. Here $\mathbf{P}^{+}$ is the pro-unipotent radical of $\mathbf{P}$ and $L_{\mathbf{P}}:=\mathbf{P}/\mathbf{P}^{+}$ is the Levi factor of $\mathbf{P}$. We denote $V_{\mathbf{P}}=\mathbf{P}^{+}/\mathbf{P}^{++}$.

Let $\mathbb{W}' = \mathbb{W} \rtimes \mu_e$. An element $w \in \mathbb{W}'$ is $\mathbb{Z}$-\textit{regular} if it permutes the roots freely and \textit{elliptic} if $\mathbb{X}^{\ast}(\mathbb{T}^{\mathrm{ad}})^{w}=0$. The order of a $\mathbb{Z}$-regular elliptic element in $\mathbb{W}\sigma \subset \mathbb{W}'$ is called a \textit{regular elliptic number} of the pair $(\mathbb{W}',\sigma)$. 

$\mathbf{P}$ is called admissible if there exists a closed orbit of $L_{\mathbf{P}}$ on the dual space $V_{\mathbf{P}}^{\ast}$ with finite stabilizers. When $\mathbf{P}$ is admissible and $\mathrm{char}(k)$ is large, it was shown in \cite[\S 2.6]{YunEpipe} that there is a bijection between admissible parahoric subgroups $\mathbf{P}$ and regular elliptic numbers $m$ using \cite[Proposition 1]{GLRY},\cite[Proposition 6.4(iv)]{Spr74} and \cite[Corollary 5.1]{ReederYu}. 

\begin{remark}
   In particular, in the case of \cite{HNY}, the regular elliptic number corresponding to $\mathbf{I}$ is the twisted Coxeter number $h_{\sigma}$. 
\end{remark}

\subsection{Epipelagic representations and automorphic representations} \label{Epipe and auto rep}

Let $\psi: k \to \mathbb{Q}_{\ell}(\nu_p)^{\times}$ be a nontrivial character and $\phi:V_{\mathbf{P}} \to k$ be a stable functional. As shown in \cite[Proposition 2.4]{ReederYu}, the simple summands of the compact induction 
\[c-\mathrm{Ind}_{\mathbf{P^{+}}}^{G(K)}(\psi\circ \phi)\]
are called epipelagic representations of $G(k)$ attached to the parahoric $\mathbf{P}$ and the stable functional $\phi$. 

Let $\mathbf{I}_0 \subset G(F_0)$ be the Iwahori subgroup corresponding to the opposite Borel $\mathbf{B}^{\mathrm{opp}}$ of $G$, and let $\mathbf{P}_0 \subset G(F_0)$ be the parahoric subgroup containing $\mathbf{I}_0$ of the same type as $\mathbf{P}_{\infty}$. Let $\widetilde{\mathbf{P}}_0$ be the normalizer of $\mathbf{P}_0$ in $G(F_0)$ and $\widetilde{L}_\mathbf{P} = \widetilde{\mathbf{P}}_0/ \mathbf{P}_0^{+}$. Let $L_{\mathbf{P}}^{\mathrm{sc}}$ be the simply connected cover of the derived group of $L_{\mathbf{P}}$, and let $L_{\mathbf{P}}(k)'=\mathrm{Im}(L_{\mathbf{P}}^{\mathrm{sc}}(k) \to L_{\mathbf{P}}(k))$.  We fix a character $\chi: \widetilde{L}_{\mathbf{P}}(k)/L_{\mathbf{P}}(k) \to \overline{\mathbb{Q}}_{\ell}^{\times}$.

Let $\pi = \otimes'_{x \in |X|} \pi_x$ of $G(\mathbb{A})$ be an automorphic representation whose local components satisfying the following conditions: $\pi_{\infty}$ is an epipelagic representation attached to $\mathbf{P}_{\infty}^{+}$ and $\phi$; $\pi_0$ has an eigenvector under $\widetilde{\mathbf{P}}_0$ on which it acts through $\chi$ via the quotient $\tilde{L}_{\mathbf{P}}(k)$; $\pi_x$ are unramified for $x\neq 0, \infty$. 

It was shown in \cite[Proposition 2.11]{YunEpipe} that there is a unique cuspidal automorphic representation $\pi = \pi(\chi,\phi)$ of $G(\mathbb{A}_F)$ satisfying the above conditions. 

 \subsection{The generalized Kloosterman sheaves.}
Let $\operatorname{Bun}=\operatorname{Bun}(\mathbf{P}_{0}^{opp},\mathbf{P}_{\infty}^{++})$ be the moduli stack of bundles with $\mathbf{P}^{opp}$-level structure at $0$ and $\mathbf{P}^{++}$-level structure at $\infty$. The Hecke correspondence for Bun classifies $(x, \mathcal{E}, \mathcal{E}^{\prime}, \tau)$ where $x \in \widetilde{X}^{\circ}$, $\mathcal{E}$, $\mathcal{E}^{\prime} \in \operatorname{Bun}$ and $\tau: \mathcal{E}|_{\widetilde{X}-x} \to \mathcal{E}^{\prime}|_{\widetilde{X}-x}$ is an isomorphism of $G$-torsors preserving the level structures at $0$ and $\infty$. The moduli stack $\operatorname{Bun}_{G}(\widetilde{\mathbf{P}}_{0}, \mathbf{P}^{+}_{\infty})$ has a unique relevant point $\mathcal{E}$ with trivial automorphism group. Let $j: V_{\mathbf{P}} \hookrightarrow \operatorname{Bun}$ be the open embedding defined by $\mathcal{E}$ and AS$_{\psi}$ the pullback along the canonical pairing $<-,->:V_{\mathbf{P}} \times V_{\mathbf{P}}^{*,st} \to \mathbb{G}_{a}$ of the Artin-Schreier sheaf on $\mathbb{G}_{a}$ corresponding to a fixed character $\psi: k \to \bar{\Q}_{\ell}$. According to \cite[Section 3]{YunEpipe}, the Hecke eigensheaf can be described as 
 $$\mathcal{A}=(j \times id_{V_{\mathbf{P}}^{*,st}})_{!}\operatorname{AS}_{\psi}.$$
 
 \noindent \textbf{The generalized Kloosterman sheaves.} Let $\mathfrak{G}$ be the group of automorphisms of $\E|_{X-{1}}$ preserving the level structure at $0$ and $\infty$. Let $\lambda$ be a dominant coroot of $G$ and $V_{\lambda}$ be the irreducible representation of $\widehat{G}$ of highest weight $\lambda$. We have an affine Schubert variety $\operatorname{Gr}_{\le \lambda}$ in the affine Grassmannian $\operatorname{Gr}=L\mathbb{G}/L^{+}\mathbb{G}$ defined by $\lambda$. Let $\G_{\le \lambda}$ be the preimage of $L^{+}\mathbb{G}\backslash\operatorname{Gr}_{\le \lambda} \subset L^{+}\mathbb{G}\backslash L\mathbb{G}\slash L^{+}\mathbb{G}$ under the evaluation map $\operatorname{ev}_{\widetilde{1}}$ at the preimage of $1 \in X$
 $$\operatorname{ev}_{\widetilde{1}}: \G \to L^{+}_{\widetilde{1}}\mathbb{G}\backslash L_{\widetilde{1}}\mathbb{G}\slash L^{+}_{\widetilde{1}}\mathbb{G}.$$
 The geometric Satake equivalence gives the intersection complex $IC_{\lambda}$ which corresponds to $V_{\lambda}$. Using the evaluation maps at $0$ and $\infty$ and compose with the projections $\widetilde{\bP}_{0} \to \widetilde{L}_{\bP}^{ab}$ and $\bP^{+}_{\infty} \to V_{\bP}$ we get 
 $$(f^{\prime},f^{\prime\prime}):\G_{\le \lambda} \to \widetilde{L}_{\bP}^{ab} \times V_{\bP}.$$
 By the existence of Hecke eigensheaves over the stable functionals $V_{\bP}^{*, st}$ with eigenvalue $\operatorname{Kl}_{\widehat{G}, \bP}^{V}(\phi)$, see \cite[Lemma 3.6, Corollary 3.7]{YunEpipe}, applying the Fourier-Deligne transform we have \cite[Proposition 3.9]{YunEpipe}:
  Let $\operatorname{Four}_{\psi}: D_{c}^{b}\left(V_{\mathbf{P}}\right) \rightarrow D_{c}^{b}\left(V_{\mathbf{P}}^{*}\right)$ be the Fourier-Deligne transform (without cohomological shift). We have
$$
\left.\mathrm{Kl}_{\widehat{G}, \mathbf{P}}^{V_{\lambda}}(\chi) \cong \operatorname{Four}_{\psi}\left(f_{!}^{\prime \prime}\left(f^{\prime *} \mathcal{L}_{\chi} \otimes \operatorname{ev}_{\widetilde{1}}^{*} \mathrm{IC}_{\lambda}\right)\right)\right|_{V_{\mathbf{P}}^{*, st}}.
$$

\section{Euler characteristic of the local system and Swan conductor of epipelagic parameters} \label{Sec3}

Let $K=F_{\infty}$ be the local field of $F$ at $\infty$. In this section, we relate the computation of the Euler characteristic of the local system corresponds to the generalized Kloosterman sheaves to Reeder-Yu's prediction on the Swan conductor of the Langlands parameter of epipelegic representations.

\subsection{Epipelegic representations and Kloosterman sheaves} \label{Sec3.1}

\textcolor{red}{}

Let $\pi_{\infty}$ be the epipelagic supercuspidal representation of $G(k)$ as the local component at $\infty$ of an automorphic representation $\pi(\chi,\phi)$ as realized in \cref{Epipe and auto rep}. As shown in \cite[Corollary 3.10]{YunEpipe}, the ${}^{L}G$-local system $\mathrm{Kl}_{{}^{L}G,\mathbf{P}}(\mathcal{K},\phi)$ is the global Langlands parameter attached to $\pi(\chi,\phi)$. Here $\mathcal{K}$ is the character sheaf corresponding to $\chi$ under sheaf-to-function correspondence.

\subsection{Expected Langlands parameter}

Let $\pi_{\infty}$ be as defined in the previous subsection. The local Langlands conjecture suggests that there should be a Galois representation $\rho_{\pi_{\infty}}: W(K^s/K) \to {}^{L}G(\overline{\mathbb{Q}}_{\ell})$ attached to $\pi_{\infty}$ as the conjectural Langlands parameter. Based on the conjecture relating adjoint gamma factors and formal degrees in \cite{GrossReeder}, Reeder and Yu predicted in \cite[Section 7.1]{ReederYu} that 
\[
\mathrm{Swan}(\widehat{\mathfrak{g}}) = \frac{\# \Phi}{m}. 
\]

\subsection{Swan conductor and the Euler characteristic} \label{Sec3.2}
We have the following lemma of the Swan conductor of the local system:
\begin{lem}
For $\tilde{X}^{\circ} \cong \mathbb{P}_k^1-\{0, \infty\}$, 
\[
\chi_{c}(\tilde{X}^{\circ}, \mathrm{Kl}^{V_{\lambda}}_{\widehat{G}, \mathbf{P}_{m}}(\chi , \phi)) = -\mathrm{Swan}_{\infty}(\mathrm{Kl}^{V_{\lambda}}_{\widehat{G}, \mathbf{P}_{m}})
\]
\end{lem}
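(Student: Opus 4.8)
The plan is to deduce the identity directly from the Grothendieck–Ogg–Shafarevich (Euler–Poincaré) formula applied to the lisse $\overline{\mathbb{Q}}_\ell$-sheaf $\mathrm{Kl}^{V_\lambda}_{\widehat{G},\mathbf{P}_m}(\chi,\phi)$ on the open curve $\tilde{X}^\circ \cong \mathbb{P}^1_k\setminus\{0,\infty\}$. First I would recall that GOS gives, for a lisse sheaf $\mathcal{F}$ of rank $r$ on a smooth connected affine curve $U$ which is the complement of a finite set $S$ of closed points in a smooth projective curve $\bar{U}$ of genus $g$,
\[
\chi_c(U,\mathcal{F}) \;=\; r\cdot\chi_c(U) \;-\; \sum_{x\in S}\mathrm{Swan}_x(\mathcal{F}),
\]
where $\chi_c(U)=2-2g-\#S$. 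The key geometric inputs are then that $\bar{U}=\tilde{X}\cong\mathbb{P}^1_k$ has genus $g=0$, that $S=\{0,\infty\}$ so $\#S=2$ and hence $\chi_c(\tilde{X}^\circ)=2-0-2=0$, and that the sheaf is (by Yun's construction, recalled in \Cref{Sec2}) \emph{tame at $0$}, so that $\mathrm{Swan}_0=0$. Plugging these in collapses the right-hand side to $-\mathrm{Swan}_\infty(\mathrm{Kl}^{V_\lambda}_{\widehat{G},\mathbf{P}_m})$, which is exactly the claimed formula; note that the rank $r$ drops out precisely because $\chi_c(\tilde{X}^\circ)=0$.

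More concretely, the steps in order are: (1) verify that $\mathrm{Kl}^{V_\lambda}_{\widehat{G},\mathbf{P}_m}(\chi,\phi)$ is a lisse sheaf on $\tilde{X}^\circ$ — this follows from its description via $\pi_! f_\phi^* \mathrm{AS}_\psi$ together with the properties of the Kloosterman sheaves in \cite{YunEpipe,HNY}, where it is shown the local system is lisse away from $0$ and $\infty$; (2) record that $\tilde{X}\to X$ is the totally ramified $\mu_e$-cover, so $\tilde{X}\cong\mathbb{P}^1_k$ and $\tilde{X}^\circ=\tilde{X}\setminus\{0,\infty\}$, giving $\chi_c(\tilde{X}^\circ)=0$; (3) invoke tameness at $0$: the level structure at $0$ is parahoric (of type $\widetilde{\mathbf{P}}_0$, not a deeper Moy–Prasad layer), so the local monodromy at $0$ is tame and $\mathrm{Swan}_0=0$; (4) apply GOS and simplify. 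One should also remark that $\mathrm{Kl}^{V_\lambda}_{\widehat{G},\mathbf{P}_m}$ and $\mathrm{Kl}^{V_\lambda}_{\widehat{G},\mathbf{P}_m}(\chi,\phi)$ have the same Swan conductor at $\infty$ since twisting by the tame character sheaf associated to $\chi$ does not change wild ramification, which is why the statement may suppress $(\chi,\phi)$ on the right-hand side.

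The only genuine subtlety — and the step I would be most careful about — is the vanishing of $\mathrm{Swan}_0$, i.e.\ tameness at $0$. Everything else is bookkeeping with the GOS formula and the genus-zero geometry. Tameness at $0$ is really a property established in the course of Yun's construction (the $\mathbf{P}_0^{\mathrm{opp}}$-level structure at $0$ forces the sheaf to be a successive extension built from tame objects, cf.\ the analysis of local monodromy at $0$ in \cite{HNY} for the Iwahori case and \cite{YunEpipe} in general), so in the write-up I would cite the relevant statement rather than reprove it. A secondary point worth a sentence is that $\chi_c$ here denotes compactly-supported Euler characteristic of the (possibly non-concentrated-in-one-degree) complex, so one uses additivity of $\chi_c$ in distinguished triangles to reduce to the lisse-sheaf case; since GOS is additive this causes no trouble. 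With these observations in place the lemma is immediate.
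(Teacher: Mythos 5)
Your argument is exactly the paper's: apply Grothendieck--Ogg--Shafarevich, note that $\chi_c(\tilde{X}^\circ)=\chi_c(\mathbb{G}_m)=0$ kills the rank term, and cite tameness at $0$ (the paper cites \cite[Theorem 4.5]{YunEpipe}) to kill $\mathrm{Swan}_0$. The extra remarks you add about $\chi$-twists not changing wild ramification and about additivity in distinguished triangles are sensible but not needed beyond what the paper already records.
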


\begin{proof}
By the Grothendieck-Ogg-Shafarevich formula, we have 
\begin{align*}
    \chi_{c}(\tilde{X}^{\circ}, \mathrm{Kl}^{V_{\lambda}}_{\widehat{G}, \mathbf{P}_{m}}(\chi , \phi)) = & \chi_c(\mathbb{G}_m)\mathrm{rk}( \mathrm{Kl}^{V_{\lambda}}_{\widehat{G}, \mathbf{P}_{m}})-\sum_{x \in \{0, \infty\}} \mathrm{Swan}_x( \mathrm{Kl}^{V_{\lambda}}_{\widehat{G}, \mathbf{P}_{m}}) \\
    = & -\mathrm{Swan}_0( \mathrm{Kl}^{V_{\lambda}}_{\widehat{G}, \mathbf{P}_{m}}) - \mathrm{Swan}_{\infty}( \mathrm{Kl}^{V_{\lambda}}_{\widehat{G}, \mathbf{P}_{m}}), 
\end{align*}
Since $0$ is the tame point as shown in \cite[Theorem 4.5]{YunEpipe}, $\mathrm{Swan}_0( \mathrm{Kl}^{V_{\lambda}}_{\widehat{G}, \mathbf{P}_{m}})=0$. Then we deduce the lemma.
\end{proof}

Thus, computing the Euler characteristic of the local system corresponding to the Kloosterman sheaves is equivalent to computing the swan conductor at $\infty$. Therefore, combining this with \Cref{Sec3.1} and \Cref{Sec3.2}, the computation of $\chi_{c}(\tilde{X}^{\circ}, \mathrm{Kl}^{\mathrm{St}}_{\widehat{G}, \mathbf{P}_{m}}(\chi , \phi))$ gives evidence to Reeder-Yu's prediction.

\section{Euler Characteristic for Symplectic Groups} \label{Sec4}

The goal of this section is to prove Theorem \ref{Echar symplectic}. Throughout the section we work over $\bar{k}$ and ignore all the Tate twists. Let $G$ be a symplectic group. We first make a summary of \cite[Sec. 7]{YunEpipe} in subsection \ref{symplectic setup} and \ref{symplectic local system} where the local system attached to the Kloosterman sheaf $\operatorname{Kl}_{\widehat{G}, \mathbf{P}}(\mathcal{K}, \phi)$ and the standard representation of $\widehat{G}$ is calculated, then make an inductive argument to calculate the Euler characteristic of the local system.

 \subsection{The set-up}\label{symplectic setup}
  Let $(M, \omega)$ be a symplectic vector space of dimension $2n$ over $k$. One can extend $\omega$ linearly to a symplectic form on $M \otimes K$ and we denote  $\mathbb{G}=\operatorname{Sp}(M, \omega)$ and $G=\operatorname{Sp}(M \otimes K, \omega)$. Since regular elliptic numbers $m$ of $\mathbb{W}$ in this case are in bijection with divisors $d|n$[Yun, 4.8], we have $m=2n/d$ and let $\ell=n/d$. We fix a decomposition 
 $$M=M_{1}\oplus M_{2} \oplus \cdots \oplus M_{\ell} \oplus M_{\ell+1} \oplus \cdots \oplus M_{m}$$
 such that $dim_{M_{i}}=d$ and $\omega(M_{i},M_{j}) \ne 0$ only if $i+j=m+1$. We identify $M_{j}$ with $M^{*}_{m+1-j}$.
 
 Define the admissible parahoric subgroup $\mathbf{P}_{m} \subset G(K)$ to be the stabilizer of the lattice chain 
 $$\Lambda_{m} \supset \Lambda_{m-1} \supset \cdots \Lambda_{1}$$ where
 $$\Lambda_{i}=\sum_{1 \le j \le i}M_{j} \otimes \mathcal{O}_{K} + \sum_{i < j \le m}M_{j}\otimes \bar{\omega}\mathcal{O}_{K}$$
 and $\bar{\omega}$ is a uniformizer of $\mathcal{O}_{F}.$ It has Levi quotient denoted by $L_{m}=\prod_{i=1}^{\ell}GL(M_{i})$ where the $i$-th factor acts on $M_{i}$ by the standard representation and on $M^{*}_{i}=M_{m+1-i}$ by the dual of the standard representation. We have $\widetilde{L}_{m}=L_{m}$ and $\widetilde{L}^{ab}_{m} \cong \prod_{i=1}^{\ell}\mathbb{G}_{m}$ given by the determinants of the GL-factors. The vector space $V_{m}:=V_{\mathbf{P}_{m}}$ can be described as
$$ V_{m}=\operatorname{Sym}^{2}\left(M_{1}^{*}\right) \oplus \operatorname{Hom}\left(M_{2}, M_{1}\right) \oplus \cdots \oplus \operatorname{Hom}\left(M_{\ell}, M_{\ell-1}\right) \oplus \operatorname{Sym}^{2}\left(M_{\ell}\right).$$
We can arrange $M_{1}, \cdots, M_{m}$ into a cyclic quiver
$$
\begin{tikzcd}
  M_1 \arrow[r, "\phi_1", blue ] \arrow[d, "\psi_m" ] & M_2 \arrow[l, "\psi_1", shift left=1.5]  \arrow[r, blue ]& \cdots \arrow[l, shift left=1.5] \arrow[r,"\phi_{\ell -1}", blue ] & M_\ell \arrow[l, "\psi_{\ell-1}",shift left=1.5]  \arrow[d, "\phi_{\ell}",  shift left=1.5, blue ] \\
 M_m \arrow[r, "\psi_{m-1}"]  \arrow[u, "\phi_{m}",  shift left=1.5, blue ] & M_{m-1} \arrow[l, "\phi_{m-1}",  shift left=1.5, blue] \arrow[r] & \cdots \arrow[r, "\psi_{\ell +1}"]  \arrow[l, shift left=1.5, blue] & M_{\ell+1} \arrow[u, "\psi_{\ell}" ]  \arrow[l, "\phi_{\ell +1}",  shift left=1.5, blue]
\end{tikzcd}
$$
so that the involution $\tau$ sends $\left\{\psi_{i}: M_{i+1} \rightarrow M_{i}\right\}$ to $\left\{-\psi_{m-i}^{*}: M_{m-i}^{*} \rightarrow M_{m+1-i}^{*}\right\}$. Therefore, $V_{m}$ is the set of $\tau$-invariant cyclic quivers of the above shape. The dual space $V_{m}^{*}$ is the space of $\tau$-invariant cyclic quivers with all the arrows reversed, so we fit them within the same commutative diagram. Let $\phi_{i}: M_{i} \to M_{i+1}$ be the arrows and we view $\phi_{m}$ (resp. $\phi_{\ell}$) as a quadratic form on $M_{m}$ (resp. $M_{\ell}$). Then $\phi=\left(\phi_{1}, \ldots, \phi_{m}\right) \in V_{m}^{*}$ is stable if and only if
\begin{itemize}
	\item All the maps $\phi_{i}$ are isomorphisms;
\item We have two quadratic forms on $M_{m}: \phi_{m}$ and the transport of $\phi_{\ell}$ to $M_{m}$ using the isomorphism $\phi_{\ell-1} \cdots \phi_{1} \phi_{m}: M_{m} \stackrel{\sim}{\rightarrow} M_{\ell}$. They are in general position in the same sense as explained in \cite[Section 6.2]{YunEpipe}.
\end{itemize}
 
\subsection{The local system.}\label{symplectic local system}
Let $\mathcal{E}=M \otimes \mathcal{O}_X$ be the trivial vector bundle of rank $2 n$ over $X$ with a symplectic form (into $\mathcal{O}_X$ ) given by $\omega$. Define an increasing filtration of the fiber of $\mathcal{E}$ at $\infty$ by $F_{\le i} \mathcal{E}_{\infty}=\oplus_{j=1}^{i} M_{j}$ and a decreasing filtration of the fiber of $\mathcal{E}$ at 0 by $F^{\geq i} \mathcal{E}_{0}=\oplus_{j=i}^{m} M_{j}$. The moduli stack $\operatorname{Bun}_{G}(\widetilde{\mathbf{P}_{0}}, \mathbf{P}^{+}_{\infty})$ then classifies triples $(\mathcal{E}, F_{\le i} \mathcal{E}_{\infty}, F^{\geq i} \mathcal{E}_{0})$. The group ind-scheme $\mathfrak{G}$ is the group of symplectic automorphisms of $\left.\mathcal{E}\right|_{X-\{1\}}$ preserving the filtrations $F_{*}, F^{*}$ and acting by identity on the associated graded of $F_{*}$. Let $\lambda$ be the dominant short coroot. The subscheme $\mathfrak{G}_{\le \lambda}$ consists of those $g \in \mathfrak{G} \subset G(F)$ whose entries have at most simple poles at $t=1$, and $\operatorname{Res}_{t=1} g$ has rank at most one. \cite[Lemma 7.4]{YunEpipe} shows that the subscheme $\G_{\le \lambda}$ can be embedded as an open subscheme of $\operatorname{Sym}^{2}_{\le 1}(M)$, where $\operatorname{Sym}^{2}_{\le 1}(M) \subset \operatorname{Sym}^{2}(M)$ is the subscheme of \textit{symmetric pure $2$-tensors}. This is equivalent to say $u$ and $v$ are parallel vectors. We may write elements $u\cdot v \in \mathrm{Sym}^2(M)_{\leq 1}$ as $u=(u_1,\dots,u_m)$ and $v=(v_1,\dots,v_m)$ with $u_i, v_i\in M_i$. Define $\gamma_i$ as follows
\begin{align} \label{gamma}
    \gamma_i(u\cdot v):= \omega(v_{m+1-i}, u_i). 
\end{align}
The definition is independent of the choice of $u,v$ expressing $u\cdot v $ therefore defines a regular function on $\mathrm{Sym}^2(M)_{\leq 1}.$
The following proposition gives an explicit description of $\operatorname{Kl}_{\widehat{G},\bP}^{st}(\phi)$ that will be used later in the computation of the Euler characteristic.

 \begin{prop}\cite[Cor. 7.6]{YunEpipe}\label{Kl for symplectic}
     
Let $\phi=\left(\phi_{1}, \ldots, \phi_{m}\right) \in V_{m}^{*, \mathrm{st}}(k)$ be a stable functional. Recall that $\mathfrak{G}_{\le \lambda}$ in this case is $\operatorname{Sym}^{2}(M)_{\le 1}-\cup_{i=1}^{\ell} \Gamma_{i}$ where the divisor $\Gamma_{i}$ is defined by the equation $\gamma_{1}+\cdots+\gamma_{i}=1$ for functions $\gamma_{i}$ (see \cite[Prop. 7.5]{YunEpipe}). Let $f_{\phi}: X^{\circ} \times \mathfrak{G}_{\le \lambda} \rightarrow \mathbb{A}^{1}$ be given by
$$
f_{\phi}(x, u \cdot v)=\omega\left(\phi_{m} v_{m}, u_{m}\right) x+\sum_{i=1}^{\ell} \frac{\omega\left(\phi_{i} v_{i}, u_{m-i}\right)}{1-\gamma_{1}(u \cdot v)-\cdots-\gamma_{i}(u \cdot v)}
$$
Let $\pi: X^{\circ} \times \mathfrak{G}_{\le \lambda} \rightarrow X^{\circ}$ be the projection. Then we have an isomorphism over $X^{\circ}$
$$
\mathrm{Kl}_{\widehat{G}, \mathbf{P}_{m}}{\mathrm{St}}(\mathbf{1}, \phi) \cong \pi ! f_{\phi}^{*} \mathrm{AS}_{\psi}[2n-1]\left(\frac{2 n-1}{2}\right).
$$
  \end{prop}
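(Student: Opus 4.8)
\emph{Proof sketch.} The plan is to deduce this from the Fourier--transform description of $\mathrm{Kl}^{V_{\lambda}}_{\widehat{G},\mathbf{P}}(\chi)$ recalled in \Cref{Sec2} (the formula of \cite[Proposition 3.9]{YunEpipe}), by specializing to $\chi=\mathbf{1}$, to the coweight $\lambda$ attached to the standard representation, and to the symplectic coordinates of \Cref{symplectic setup}. Since $\chi=\mathbf{1}$ the character sheaf $\mathcal{L}_{\chi}$ is trivial, so the factor $f^{\prime *}\mathcal{L}_{\chi}$ disappears and
\[
\mathrm{Kl}^{\mathrm{St}}_{\widehat{G},\mathbf{P}_{m}}(\mathbf{1},\phi)\;\cong\;\operatorname{Four}_{\psi}\!\bigl(f^{\prime\prime}_{!}\,\operatorname{ev}_{\widetilde 1}^{*}\operatorname{IC}_{\lambda}\bigr)\big|_{V_{m}^{*,\mathrm{st}}},
\]
which one then reads off fiberwise, letting the Hecke point vary over $x\in X^{\circ}$ so that the left side becomes a sheaf on $X^{\circ}$.

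First I would use geometric Satake together with \cite[Lemma 7.4]{YunEpipe}: for $\mathbb{G}=\operatorname{Sp}(M,\omega)$ and $\lambda$ the dominant short coroot, $\operatorname{ev}_{\widetilde 1}^{*}\operatorname{IC}_{\lambda}$, restricted along the open embedding $\mathfrak{G}_{\le\lambda}\hookrightarrow\operatorname{Sym}^{2}_{\le 1}(M)$, is the constant sheaf $\overline{\mathbb{Q}}_{\ell}$ placed in the degree corresponding to the shift $[2n-1]$ and twist $(\tfrac{2n-1}{2})$ of the statement, because the relevant part of the affine Schubert variety is smooth there. Next, unwinding the definition of $\operatorname{Four}_{\psi}$ (normalized without shift, as in \cite[Proposition 3.9]{YunEpipe}) via base change and the projection formula, the stalk of the right--hand side at $(x,\phi)$ becomes the compactly supported cohomology of $\mathfrak{G}_{\le\lambda}$ with coefficients in $\mathrm{AS}_{\psi}$ pulled back along $(u\cdot v)\mapsto\langle f^{\prime\prime}_{x}(u\cdot v),\phi\rangle$; equivalently $\mathrm{Kl}^{\mathrm{St}}_{\widehat{G},\mathbf{P}_{m}}(\mathbf{1},\phi)\cong\pi_{!}\,g^{*}\mathrm{AS}_{\psi}[2n-1](\tfrac{2n-1}{2})$ for the function $g(x,u\cdot v)=\langle f^{\prime\prime}_{x}(u\cdot v),\phi\rangle$ on $X^{\circ}\times\mathfrak{G}_{\le\lambda}$. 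It remains to identify $g$ with the explicit $f_{\phi}$.

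That identification is the heart of the argument and the step I expect to be the main obstacle. Using the cyclic--quiver coordinates for $V_{m}$ and $V_{m}^{*}$ and the lattice chain $\Lambda_{\bullet}$ cutting out $\mathbf{P}_{m}$, one represents a point of $\mathfrak{G}_{\le\lambda}$ as a symplectic automorphism of $\mathcal{E}|_{X-\{1\}}$ with a simple pole at $t=1$ whose residue is the rank--$\le 1$ symmetric tensor $u\cdot v$, with $u=(u_{1},\dots,u_{m})$, $v=(v_{1},\dots,v_{m})$, then transports the bundle to $\infty$, projects $\mathbf{P}^{+}_{\infty}\to V_{m}$, and pairs with $\phi=(\phi_{1},\dots,\phi_{m})$. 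The term $\omega(\phi_{m}v_{m},u_{m})\,x$ comes from the position $t=x$ of the modification relative to $\infty$, while the rational terms $\omega(\phi_{i}v_{i},u_{m-i})/(1-\gamma_{1}-\cdots-\gamma_{i})$ come from propagating the modification through the successive steps of $\Lambda_{\bullet}$, the denominators being exactly the equations of the divisors $\Gamma_{i}$ removed from $\operatorname{Sym}^{2}_{\le 1}(M)$ in \cite[Proposition 7.5]{YunEpipe}, i.e. the locus where the modified bundle leaves $\operatorname{Bun}$. Along the way one checks that $g$ depends only on the tensor $u\cdot v$ and not on the chosen factorization into $u,v$ — which holds because $\gamma_{i}$ as in \eqref{gamma} and each numerator $\omega(\phi_{i}v_{i},u_{m-i})$ are bilinear of the correct type — and that no Tate twist beyond the bookkept $(\tfrac{2n-1}{2})$ appears. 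Assembling these identifications yields the claimed isomorphism over $X^{\circ}$.
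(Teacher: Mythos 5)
The paper does not actually prove this proposition; it is quoted verbatim from Yun's \cite[Cor.~7.6]{YunEpipe} and then used as a black box in the computations of \Cref{Sec4}. So there is no in-paper proof to compare against, and your sketch is an attempt to reconstruct Yun's argument. The overall strategy you describe — specialize \cite[Prop.~3.9]{YunEpipe} to $\chi=\mathbf{1}$, restrict to the coweight of the standard representation, transport the pullback of $\mathrm{IC}_{\lambda}$ along the open embedding $\mathfrak{G}_{\le\lambda}\hookrightarrow\operatorname{Sym}^{2}_{\le1}(M)$ of \cite[Lemma~7.4]{YunEpipe}, unwind the Fourier--Deligne transform by base change and the projection formula, and then match the resulting pairing function with $f_{\phi}$ — is indeed the right shape of argument.

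However, there are two genuine gaps. First, the assertion that $\operatorname{ev}_{\widetilde 1}^{*}\mathrm{IC}_{\lambda}$ restricted to $\mathfrak{G}_{\le\lambda}$ is the constant sheaf shifted by $[2n-1]$ ``because the relevant part of the affine Schubert variety is smooth there'' is not justified and is not obviously true. For $\widehat{G}=\mathrm{SO}_{2n+1}$ the standard representation is quasi-minuscule, not minuscule, so $\mathrm{Gr}_{\le\lambda}$ is singular along the point stratum $\mathrm{Gr}_{0}$, and the vertex $0\in\operatorname{Sym}^{2}_{\le1}(M)$ — the image of that singular stratum — does lie in $\mathfrak{G}_{\le\lambda}$, since none of the removed divisors $\Gamma_{i}$ pass through $0$ ($\gamma_{i}(0)=0\neq1$). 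You would need to either show that the correction to the stalk of $\mathrm{IC}_{\lambda}$ at the vertex drops out after applying $\pi_{!}$, or locate the precise reduction in \cite{YunEpipe} that permits replacing $\mathrm{IC}_{\lambda}$ by a constant. Second, the identification of $g(x,u\cdot v)=\langle f^{\prime\prime}_{x}(u\cdot v),\phi\rangle$ with the explicit rational function $f_{\phi}$ is the actual content of \cite[Prop.~7.5, Cor.~7.6]{YunEpipe}, and your proposal describes it only heuristically (``propagating the modification through the successive steps of $\Lambda_{\bullet}$''). A complete proof requires writing down the symplectic automorphism with a simple pole at $t=1$ and rank-one residue $u\cdot v$, tracking its effect through the lattice chain to $\mathbf{P}^{+}_{\infty}\to V_{m}$, and verifying by direct computation that the denominators $1-\gamma_{1}-\cdots-\gamma_{i}$ arise at the $i$th step. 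This is a genuine, somewhat involved calculation, not a formality to be deferred, and until it is carried out the sketch does not constitute a proof.
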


\subsection{Computation of Euler characteristic}
\begin{thm}\label{symplectic}
	We have 
 \begin{align}\label{symp-equation}
     -\chi_{c}(\widetilde{X}^{\circ}, \mathrm{Kl}^{\operatorname{st}}_{\widehat{G}, \mathbf{P}_{m}}(\chi , \phi))= d 
 \end{align}

\end{thm}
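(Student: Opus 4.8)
The plan is to compute the compactly supported Euler characteristic of $\mathrm{Kl}^{\mathrm{St}}_{\widehat G,\mathbf P_m}(\mathbf 1,\phi)$ directly from the description in \Cref{Kl for symplectic}. Since $\chi_c$ is insensitive to the shift and Tate twist, we have
$$
-\chi_c(\widetilde X^\circ,\mathrm{Kl}^{\mathrm{St}}_{\widehat G,\mathbf P_m}(\mathbf 1,\phi)) = -\chi_c\bigl(X^\circ\times\mathfrak G_{\le\lambda},\, f_\phi^*\mathrm{AS}_\psi\bigr),
$$
after identifying $\widetilde X^\circ$ with $X^\circ$ (both are $\mathbb G_m$ over $\bar k$; the $\mu_e$-cover does not change $\chi_c$ of the local system on the base since it is just a reparametrization, or one absorbs it into the definition). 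The key input is that $\mathrm{AS}_\psi$ is a nontrivial rank-one local system on $\mathbb A^1$, so by the Künneth formula and the projection $\pi$ to $X^\circ$, the computation reduces to understanding the fibers of $f_\phi$ over each $x\in X^\circ$ and applying $\chi_c(\mathbb A^1,\mathrm{AS}_\psi)=0$ on the fibers where $f_\phi$ is "sufficiently nonconstant."

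First I would fix $x\in X^\circ$ and analyze the function $u\cdot v\mapsto f_\phi(x,u\cdot v)$ on $\mathfrak G_{\le\lambda}=\operatorname{Sym}^2(M)_{\le1}-\bigcup_{i=1}^\ell\Gamma_i$. The space $\operatorname{Sym}^2(M)_{\le1}$ of symmetric pure $2$-tensors is (the affine cone over) the image of $\mathbb P(M)$ under the Veronese-type map, so it is $2n$-dimensional with a $\mathbb G_m$-action by scaling; stratifying by this scaling action, or better by pulling back along the double cover $M\times M\dashrightarrow \operatorname{Sym}^2(M)_{\le 1}$, turns $f_\phi$ into the restriction of a more explicit rational function in the coordinates $u_i,v_i$. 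The strategy is then a fibration/induction argument: project $\operatorname{Sym}^2(M)_{\le1}$ onto the partial data $(u_m\cdot v_m)$ or onto the partial sums $\gamma_1+\cdots+\gamma_i$, and on the generic fiber the coefficient $\omega(\phi_m v_m,u_m)$ of the variable $x$ (equivalently the leading behaviour of $f_\phi$ in the remaining affine directions) is nonzero, forcing the $\chi_c$ of that fiber to vanish by the Artin–Schreier vanishing $\chi_c(\mathbb A^1,\mathrm L_{\psi(ax+b)})=0$ for $a\ne0$. What survives is the locus where all these "leading coefficients" degenerate simultaneously, which by stability of $\phi$ (all $\phi_i$ isomorphisms, the two quadratic forms in general position) is cut out by the vanishing of the relevant $u_i,v_i$ and is a union of lower-dimensional pieces whose Euler characteristics can be summed. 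I expect the answer $d$ to emerge as the count of a specific finite set — morally the $d=2n/m$ points coming from the rank-one residue condition $\operatorname{Res}_{t=1}g$ together with the $\ell$ poles at $\gamma_1+\cdots+\gamma_i=1$ — and the induction will be on $\ell$ (equivalently on $m$), peeling off one $\operatorname{Hom}(M_{i+1},M_i)$ factor at a time and tracking how the excised divisors $\Gamma_i$ contribute.

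Concretely, the steps in order: (1) reduce to $-\chi_c(X^\circ\times\mathfrak G_{\le\lambda},f_\phi^*\mathrm{AS}_\psi)$ and use $X^\circ\cong\mathbb A^1-\{0\}$ so that, writing $f_\phi(x,u\cdot v)=A(u\cdot v)\,x+B(u\cdot v)$ with $A(u\cdot v)=\omega(\phi_m v_m,u_m)$, split $\mathfrak G_{\le\lambda}$ into $U=\{A\ne0\}$ and $Z=\{A=0\}$; (2) on $X^\circ\times U$ integrate over the $x$-variable first — since $A\ne0$ the linear-in-$x$ Artin–Schreier sheaf has $\chi_c=0$ along each $\mathbb G_m$-fiber minus possibly the point $x=0$, giving a correction $-\chi_c(U,f_\phi(0,-)^*\mathrm{AS}_\psi)$; (3) on $X^\circ\times Z$ the function is independent of $x$, so $\chi_c=\chi_c(\mathbb G_m)\cdot\chi_c(Z,\ldots)=0$; (4) thus the whole answer equals $-\chi_c(U,f_\phi(0,-)^*\mathrm{AS}_\psi)$ where $f_\phi(0,u\cdot v)=\sum_{i=1}^\ell\omega(\phi_iv_i,u_{m-i})/(1-\gamma_1-\cdots-\gamma_i)$, and now run the induction on $\ell$ by the same split-and-integrate procedure, isolating at each stage the variable appearing in the numerator of the last term with the pole $1-\gamma_1-\cdots-\gamma_\ell$; (5) the base case $\ell=1$ (i.e.\ $d=2n$, $m=1$) is a one-variable Artin–Schreier computation on $\operatorname{Sym}^2(M)_{\le1}$ minus one divisor, where the pole contributes exactly $1$ to the negative Euler characteristic, and tracking the multiplicity $d=\dim M_i$ through the vector-valued $\omega(\phi_i v_i,u_{m-i})$ (which is a nondegenerate bilinear pairing on a $d$-dimensional space, contributing $\chi_c=0$ off a $\{0\}$ and hence a single point) yields the factor $d$.

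The main obstacle I anticipate is step (4)–(5): making the inductive bookkeeping of the excised divisors $\Gamma_i$ rigorous. The difficulty is that $f_\phi(0,-)$ has poles along $1-\gamma_1-\cdots-\gamma_i=0$, so one cannot simply "integrate out" a coordinate without controlling what happens as that coordinate runs through the pole locus; one needs to compactify the fiber carefully, identify the boundary contributions (which are where the extra $+1$'s or the factor $d$ come from), and check that the general-position hypothesis on $\phi$ (stability) guarantees the degeneracy loci are of the expected dimension so that no unexpected strata contribute. A clean way to organize this is to pass to the double cover by $\{(u,v)\}$, use the multiplicativity of $\chi_c$ under the scaling $\mathbb G_m$-action to reduce to a projective quadric-complement computation, and invoke the additivity of $\chi_c$ in stratifications together with the known $\chi_c$ of smooth quadrics; the residue/rank-one condition on $\mathfrak G_{\le\lambda}$ should then pin down the finite count $d$ as the degree of an explicit finite map.
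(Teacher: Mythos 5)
Your overall strategy matches the paper's: split off the $x$-variable, observe that the $\mathbb{G}_m$-fiber contributes $-1$ to $\chi_c$ where $\omega(\phi_m v_m,u_m)\neq 0$ and $0$ where it vanishes, then induct on $\ell$ by projecting away one $M_j$-coordinate at a time, and finish with a quadric-intersection count after passing to the double cover $(u,v)$. But there are two concrete gaps. First, the base-case arithmetic is wrong: since $m=2\ell$ and $d=n/\ell$, taking $\ell=1$ gives $m=2$ and $d=n$, not ``$m=1$, $d=2n$'' (and $m=1$ never occurs for $\mathrm{Sp}_{2n}$ since $m$ is always even). This is not cosmetic: $m=2$ requires a genuinely separate argument, because off the quadric $\omega(\phi_1 v_1,u_1)=0$ the function $f_1=\omega(\phi_1 v_1,u_1)/(1-\gamma_1)$ is $\mathbb{G}_m$-homogeneous of degree zero under the scaling action, so one must descend to the projectivizations $\mathbb{P}(\widetilde U)$, $\mathbb{P}(\widetilde W)$ and do an extra layer of projective-quadric Euler characteristics, as in \Cref{4.4}; the fibration-peeling device from steps (2)--(4) does not run all the way down to $\ell=1$ unchanged.

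Second, the inductive step you defer is the bulk of the proof and is where the pole loci actually bite. The paper's mechanism (\Cref{4.3}) is: project $U_{i+1}\to U_i'$ by forgetting $M_{m-i-2}$, check the fiber is an affine space so that $\chi_c$ passes through (up to an even shift), and identify $U_i$ with the sublocus $v_{i+1}=0$ of $U_i'$ compatibly with the Artin--Schreier twists; the $W_i$-chain is more delicate because there the fiber is only a $\mathbb{G}_m$-bundle over part of the base and the twist by $f_{i+1}^*\mathrm{AS}_\psi$ must be shown to vanish or trivialize stratum by stratum. You gesture at this but do not show why the excised divisors $\gamma_1+\cdots+\gamma_i=1$ are innocuous under these projections, which is exactly the bookkeeping that has to be done. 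Finally, the factor $d$ does not come from ``counting poles'' or from the rank-one residue condition as you speculate; it comes from the single lemma $\chi_c(\Gamma_1\cap\Gamma_1')=-d$ (intersection of two transverse affine quadrics in $\mathrm{Sym}^2_{\leq 1}(M_1\oplus M_m)$, computed via fibering over $\mathbb{P}(M_1)$), together with $\chi_c(\mathrm{Sym}^2_{\leq 1}(M_1\oplus M_m))=1$, $\chi_c(\Gamma_1)=0$, $\chi_c(\Gamma_1')=1$, which assemble to $-1+0+1-(-d)=d$.
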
 
By the same argument as in \cite{Katz} the Swan conductor of $ \mathrm{Kl}^{\operatorname{st}}_{\widehat{G}, \mathbf{P}_{m}}(\chi , \phi)$ at $\infty$ does not depend on $\chi$, so does the Euler characteristic of $\mathrm{Kl}^{\operatorname{st}}_{\widehat{G}, \mathbf{P}_{m}}(\chi , \phi)$. We make an inductive argument as follows.
 \begin{prop}\label{4.3}
Let $m \ge 3$ be a regular elliptic number. Let $\Gamma_i \subset \mathrm{Sym}^2_{\leq 1}(M)$ be the divisor defined by the equation $\gamma_i'=\gamma_1+\cdots+\gamma_i=1$ for functions $\gamma_i$ (recall \ref{gamma}). Let $U_i = \mathrm{Sym}^2_{\leq 1}(\oplus_{j=1}^i M_j \oplus_{j'=m-i-1}^{m} M_{j'}) - \cup_{j=1}^{i}\Gamma_i$ for $1\leq i \leq \ell-1$, $U_{\ell}= \mathrm{Sym}^2_{\leq 1}(M) - \cup_{j=1}^{\ell}\Gamma_i$, and $U_0 = \mathrm{Sym}^2_{\leq 1}(M_1\oplus M_m) - \Gamma_1$. Let $W_i \subset U_i$ be the divisor defined by $\omega(\phi_{m}(v_m), u_m)=0$ for $0\leq i \leq \ell$. Assume $\chi =1$. We have
\begin{align}\label{split}
    (-1)^{2n-1}\chi_c(\widetilde{X}^{\circ}, \mathrm{Kl}^{\operatorname{st}}_{\widehat{G}, \mathbf{P}_{m}}(\chi , \phi)) = -\chi_c(\mathrm{Sym}_{\le 1}^2(M_1 \oplus M_{m}))+\chi_c(\Gamma_1)+\chi_c(\Gamma_1')-\chi_c(\Gamma_1 \cap \Gamma_1'). 
\end{align}
 \end{prop}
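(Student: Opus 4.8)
The plan is to reduce the Euler characteristic to a purely geometric count in two moves: first integrate out the curve coordinate $x\in X^{\circ}$, then strip the symplectic space $M$ down to its two extreme blocks $M_1\oplus M_m$ by an induction exploiting the affine-linearity of the numerators of $f_{\phi}$. To begin, since $\mathbb{G}=\operatorname{Sp}(M,\omega)$ is of type $C_n$ its pinned automorphism group is trivial, so $e=1$ and $\widetilde{X}^{\circ}=X^{\circ}=\mathbb{G}_m$. By \Cref{Kl for symplectic} we have $\mathrm{Kl}^{\mathrm{St}}_{\widehat{G},\mathbf{P}_m}(\mathbf{1},\phi)\cong\pi_{!}f_{\phi}^{*}\mathrm{AS}_{\psi}[2n-1]$ up to a Tate twist, which does not affect Euler characteristics; writing $\mathcal{F}=f_{\phi}^{*}\mathrm{AS}_{\psi}$ on $X^{\circ}\times\G_{\le\lambda}$ and using the identity $R\Gamma_c(X^{\circ},\pi_{!}\mathcal{F})=R\Gamma_c(X^{\circ}\times\G_{\le\lambda},\mathcal{F})$,
\[
(-1)^{2n-1}\chi_c\bigl(\widetilde{X}^{\circ},\mathrm{Kl}^{\mathrm{St}}_{\widehat{G},\mathbf{P}_m}(\mathbf{1},\phi)\bigr)=\chi_c\bigl(X^{\circ}\times\G_{\le\lambda},\mathcal{F}\bigr).
\]

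Next I would compute the right-hand side through the other projection $p\colon X^{\circ}\times\G_{\le\lambda}\to\G_{\le\lambda}$. Along the fibres of $p$ the map $f_{\phi}$ is the affine function $x\mapsto Ax+B$ with leading coefficient $A=\omega(\phi_m v_m,u_m)$ and constant term $B=\sum_{i=1}^{\ell}\frac{\omega(\phi_i v_i,u_{m-i})}{1-\gamma_1-\cdots-\gamma_i}$, both regular on $\G_{\le\lambda}=\operatorname{Sym}^2_{\le 1}(M)-\bigcup_{i=1}^{\ell}\Gamma_i$ because the $\Gamma_i$ are exactly the zero loci of the denominators. Decompose $\G_{\le\lambda}=W_{\ell}\sqcup(\G_{\le\lambda}-W_{\ell})$ along $W_{\ell}=\{A=0\}$. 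Over $W_{\ell}$ the sheaf $\mathcal{F}$ is pulled back from $W_{\ell}$, so its contribution vanishes by the Künneth formula and $\chi_c(\mathbb{G}_m)=0$. Over $\G_{\le\lambda}-W_{\ell}$ I extend $f_{\phi}$ to $Ax+B$ on $\mathbb{A}^1\times(\G_{\le\lambda}-W_{\ell})$, use the fibrewise substitution $x\mapsto Ax+B$ (legitimate since $A$ is invertible there) to identify $(Ax+B)^{*}\mathrm{AS}_{\psi}$ with the pullback of $\mathrm{AS}_{\psi}$ from the $\mathbb{A}^1$-factor, whose compactly supported cohomology vanishes, and read off from the excision triangle for $\{0\}\times(\G_{\le\lambda}-W_{\ell})$ that this piece contributes $-\chi_c(\G_{\le\lambda}-W_{\ell},B^{*}\mathrm{AS}_{\psi})$. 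Hence the whole expression equals $-\chi_c(\G_{\le\lambda}-W_{\ell},B^{*}\mathrm{AS}_{\psi})$.

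It then remains to prove $\chi_c(\G_{\le\lambda}-W_{\ell},B^{*}\mathrm{AS}_{\psi})=\chi_c(U_0-W_0)$, where on the right the sheaf is constant: on $\operatorname{Sym}^2_{\le 1}(M_1\oplus M_m)$ every numerator $\omega(\phi_i v_i,u_{m-i})$ vanishes, since $m\ge 3$ forces $u_{m-i}$ (when $i=1$) or $v_i$ (when $2\le i\le\ell$) to lie in a discarded block, so $B\equiv 0$ there. I would establish this by an induction over $i=\ell,\ell-1,\dots,1$, at each stage forgetting the interior block(s) that distinguish $U_i$ from $U_{i-1}$ and reinserting the divisor just removed. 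Over the fibres of the forgetful map $U_i-W_i\to U_{i-1}-W_{i-1}$ the $i$-th summand of $B$ is affine-linear in the coordinates being integrated out: on the open sublocus where its linear part is nonzero, Artin--Schreier vanishing kills the fibre cohomology, while on the complementary locus $B^{*}\mathrm{AS}_{\psi}$ descends to the smaller space up to an $\mathbb{A}^{r}$-factor contributing $1$ to $\chi_c$, and the excision triangle shows the two pieces assemble so that the Euler characteristic is unchanged; iterating down to $i=0$ gives the claim. I expect this to be the main obstacle: the denominators $1-\gamma_1-\cdots-\gamma_i$ themselves involve interior coordinates, so one must track carefully how the divisors $\Gamma_i$ and $W_i$ restrict to the fibres and isolate the right degenerate sub-loci, and this is the one place where the hypothesis $m\ge 3$ is genuinely used.

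Finally, writing $\Gamma_1'$ for the divisor $\{\omega(\phi_m v_m,u_m)=0\}$ inside $\operatorname{Sym}^2_{\le 1}(M_1\oplus M_m)$, one has $U_0-W_0=\operatorname{Sym}^2_{\le 1}(M_1\oplus M_m)-(\Gamma_1\cup\Gamma_1')$, so additivity of $\chi_c$ gives $\chi_c(U_0-W_0)=\chi_c(\operatorname{Sym}^2_{\le 1}(M_1\oplus M_m))-\chi_c(\Gamma_1)-\chi_c(\Gamma_1')+\chi_c(\Gamma_1\cap\Gamma_1')$; combined with the reductions above this is exactly \eqref{split}.
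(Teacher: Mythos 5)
Your proposal follows the paper's strategy step for step: integrate out the rotation coordinate via the second projection, split $\mathfrak{G}_{\le\lambda}$ along $W_\ell=\{A=0\}$ (Künneth on the $A=0$ piece, Artin--Schreier triviality of the $A\ne 0$ piece), and then strip down to $U_0-W_0=\operatorname{Sym}^2_{\le1}(M_1\oplus M_m)-(\Gamma_1\cup\Gamma_1')$ by an induction over the blocks, finishing by additivity of $\chi_c$. This is exactly how the paper gets from $\pi_{2!}f_\phi^*\mathrm{AS}_\psi$ to $-\chi_c(U_\ell,f_{\le\ell}^*\mathrm{AS}_\psi)+\chi_c(W_\ell,f_{\le\ell}^*\mathrm{AS}_\psi)$ and then to $-\chi_c(U_0)+\chi_c(W_0)$.

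The one place where the two arguments are organized differently, and where you have a gap you yourself flag, is the descent $U_i-W_i\to U_{i-1}-W_{i-1}$: you drop the two interior blocks $M_i,\,M_{m-i-1}$ at once and then have to deal with the fact that the denominator $1-\gamma_1-\cdots-\gamma_i$ contains $\gamma_i=\omega(v_{m+1-i},u_i)$, so the function along the fibre is rational rather than affine-linear in the discarded coordinates. The paper avoids this by splitting the $U$-step in two: first forget only $M_{m-i-2}$, on which $f_{\le i+1}$ does not depend at all (both its numerators and its denominators omit that block), so the fibre is an honest affine space $M_{m-i-2}$ and contributes trivially to $\chi_c$; then separately restrict along $v_{i+1}=0$ inside the intermediate space $U_i'$ using Artin--Schreier vanishing in the $v_{i+1}$-direction. (The $W$-step in the paper does drop both blocks at once, but there the extra divisor condition makes the fibre decomposition cleaner.) Your sketch is in the right spirit, and you correctly identify the denominator as the obstruction, but to actually close the induction you should factor the forgetful map through the intermediate block as the paper does, so that at each stage the function you are integrating against the Artin--Schreier sheaf is genuinely affine-linear on the fibre.
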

\begin{proof}
 We assume $\mathcal{K}$ is the trivial character sheaf as in \cite[Prop 6.8]{YunEpipe}. 
 By definition, $U_{\ell}=\mathfrak{G}_{\le \lambda}$.
Let $f_i: U_{\ell} \to \mathbb{A}^1$ be the function $[v] \mapsto \frac{\omega(\phi_{i}v_{i}, u_{m-i})}{1-\gamma_i'(u\cdot v)}$. This function only depends on the coordinates $u_1, \cdots, u_i, u_{m-i},\cdots, u_m, v_1, \cdots, v_i, v_{m-i},\cdots, v_m$. Let $f_{\leq i}=f_1+\cdots f_i$. Let $f_m=\omega(\phi_mv_m, u_m)$. 

Consider the projection $\pi_2: \mathbb{G}_m^{\mathrm{rot}} \times \mathfrak{G}_{\lambda} \to \mathfrak{G}_{\lambda}$. The stalk of $\pi_{2,!}f_{\phi}^{\ast}\mathrm{AS}_{\psi}$ over $u\cdot v$ is 
\[
f^{\ast}_{\leq \ell}\mathrm{AS}_{\psi} \otimes \mathrm{H}^{\ast}_c(\mathbb{G}_m^{\mathrm{rot}}, T^{\ast}_{f_{m}(u\cdot v)}\mathrm{AS}_{\psi}) 
\]
where $T_{f_{m}(u\cdot v)}$ is the map $\mathbb{G}_m \to \mathbb{A}^1$ given by multiplication by $f_{m}(u\cdot v)$. 

When $f_{m}(u\cdot v)=0$ we have $\mathrm{H}^{\ast}_c (\mathbb{G}_m^{\mathrm{rot}}, T^{\ast}_{f_{m}(u\cdot v)}\mathrm{AS}_{\psi})=\mathrm{H}^{\ast}_c (\mathbb{G}_m, 0^{\ast}\mathrm{AS}_{\psi})=\mathrm{H}^{\ast}_c(\mathbb{G}_m, \overline{\mathbb{Q}_{\ell}})$. When $f_{m}(u\cdot v)\neq 0$, since $H_c^{\ast}(\mathbb{A}^1, \mathrm{AS}_{\psi})=0$, we have $\mathrm{H}^{\ast}_c (\mathbb{G}_m^{\mathrm{rot}}, T^{\ast}_{f_{m}(u\cdot v)}\mathrm{AS}_{\psi})=\overline{\mathbb{Q}_{\ell}}[-1]$. Therefore we have
\begin{align}\begin{split}\label{chiind}
(-1)^{2n-1}\chi_c(\widetilde{X}^{\circ}, \mathrm{Kl}^{\operatorname{st}}_{\widehat{G}, \mathbf{P}_{m}}(1 , \phi)) &= \chi_c(\mathfrak
    {G}_{\leq \lambda}, \pi_{2!}f_{\phi}^{\ast}\mathrm{AS}_{\psi}) \\
    &=  -\chi_c(U_{\ell}, f^{\ast}_{\leq \ell}\mathrm{AS}_{\psi})+\chi_{c}(W_{\ell}, f^{\ast}_{\leq \ell}\mathrm{AS}_{\psi})
    \end{split}
\end{align}
We will show that the equalities
\begin{align}\label{Uind}
    \chi_c(U_{i+1}, f^{\ast}_{\leq i+1} \mathrm{AS}_{\psi}) =  \chi_c(U_{i}, f^{\ast}_{\leq i} \mathrm{AS}_{\psi})
\end{align}
and 
\begin{align}\label{Wind}
    \chi_c(W_{i+1}, f^{\ast}_{\leq i+1} \mathrm{AS}_{\psi}) =  \chi_c(W_{i}, f^{\ast}_{\leq i} \mathrm{AS}_{\psi})
\end{align}
holds for $1 \leq i \leq \ell-1$.
Consider the projection $p: U_{i+1} \to U_i'$ by forgetting the $M_{m-i-2}$ component. We have
\[
p_{!}f_{\leq i+1}^{\ast}\mathrm{AS}_{\psi}  = f^{\ast}_{\leq i+1} \mathrm{AS}_{\psi}   \otimes p_{!}\overline{\mathbb{Q}_{\ell}}.
\]
Hence 
\[
\chi_c(U_{i+1}, f_{\leq i+1}^{\ast}\mathrm{AS}_{\psi}) = \chi_c(U_i', f_{\leq i+1}^{\ast}\mathrm{AS}_{\psi}\otimes p_{!}\overline{\mathbb{Q}_{\ell}})
\]
Fix $u'\cdot v'\in U_i'$ such that $$u'=(u_1',\cdots,u_{i+1}',u_{m-i-1}',\cdots,u_m'),$$ 
$$v'=(v_1',\cdots,v_{i+1}',v_{m-i-1}',\cdots,v_m'),$$ and let 
\[
\gamma_i'=\gamma_1(u'\cdot v')+\cdots+\gamma_i(u'\cdot v') = \omega(v_{m}',u_1')+\cdots+\omega(v_{m+1-i}',u_{i}).
\]
The fiber of $p$ over $u'\cdot v'$ is $M_{m-i-2}$. Thus we have $H_c^{\ast}(p^{-1}(u'\cdot v'))=\overline{\mathbb{Q}_{\ell}}[-2d]$. Moreover, $$\chi_c(U_{i+1}, f_{\leq i+1}^{\ast}\mathrm{AS}_{\psi})=\chi_c(U_i', f^{\ast}_{\leq i+1} \mathrm{AS}_{\psi}).$$ Since $U_i$ can be identified with the subscheme of $U_i'$ where $v_{i+1}=0$, we have $\chi_c(U_i', f^{\ast}_{\leq i+1} \mathrm{AS}_{\psi}) = \chi_c(U_i, f^{\ast}_{\leq i}\mathrm{AS}_{\psi})$. 

Recall that $U_0 = \mathrm{Sym}^2_{\leq 1}(M_1\oplus M_m) - \Gamma_1$. Consider the projection $p_0: U_1 \to U_0$ by forgetting the $M_{m-2}, M_{m-1}$ component. We have
\[
\chi_c(U_1, f_1^{\ast}\mathrm{AS}_{\psi})=\chi_c(U_0, 0^{\ast}\mathrm{AS}_{\psi}\otimes p_{0!}f_1^{\ast}\mathrm{AS}_{\psi})
\]
Fix $u\cdot v \in U_0$ where $u=(u_1,u_m)$ and $v=(v_1,v_m)$. By definition, $\gamma_1=\omega(v_m,u_1)$. The fiber of $p_0$ is $M_{m-2}\otimes M_{m-1}$, therefore $H_c^{\ast}(p_0^{-1}(u\cdot v))=\overline{\mathbb{Q}_{\ell}}[-4d]$. One deduce that 
\begin{align}\label{U0ind}
    \chi_c(U_1, f_1^{\ast}\mathrm{AS}_{\psi})=\chi_c(U_0).
\end{align}

Consider the projection $p: W_{i+1} \to W_i$. We have
\[
p_{!}f_{\leq i+1}^{\ast}\mathrm{AS}_{\psi}  = f^{\ast}_{\leq i} \mathrm{AS}_{\psi}   \otimes p_{!}f_{i+1}^{\ast} \mathrm{AS}_{\psi}.
\]
Fix $u'\cdot v' \in W_i$. The fiber of $p^{-1}(u'\cdot v')$ is $\{(u_{i+1}, u_{m-i-2}) \in M_{i+1} \times M_{m-i-2}) | \omega(v_{m-i}',u_{i+1})+\Gamma_i \neq 1 \}$. The function $f_{i+1}$ along the fiber $p^{-1}(u'\cdot v')$ is a linear function in $v_{i+1}$ (the vector parallel to $u_{i+1}$) given by $f_{i+1}=\frac{\omega(\phi_{i+1}v_{i+1}, u_{m-i-1})}{1-\omega(v'_{m-i},u_{i+1})-\Gamma_i}$. The stalk of $p_{1,!}f_{i+1}^{\ast}\mathrm{AS}_{\psi}$ at $u'\cdot v'$ is $H_c^{\ast}(p^{-1}(u'\cdot v'), f_{i+1}^{\ast}\mathrm{AS}_{\psi})$, which vanishes when $u'_{m-i-1}\neq 0$. We see that
\[
H_c^{\ast}(p^{-1}(u'\cdot v'), f_{i+1}^{\ast}\mathrm{AS}_{\psi}) = H_c^{\ast}(p^{-1}(u'\cdot v'), 0^{\ast}\mathrm{AS}_{\psi}) = H_c^{\ast}(p^{-1}(u'\cdot v'), \overline{\mathbb{Q}_{\ell}}).
\]
When $v'_{m-i} = 0$ this term equals $\overline{\mathbb{Q}_{\ell}}[-4d]$ and when $v'_{m-i} \neq 0$, it is $H_c^{\ast}(\mathbb{G}_m)[-4d+2]$. Therefore, $p_{!}f_{\leq i+1}^{\ast}\mathrm{AS}_{\psi}$ and the constant sheaf $\overline{\mathbb{Q}_{\ell}}$ are the same in the Grothendieck group of $D_c^b(W_i)$. Thus we have 
\[
\chi_c(W_{i+1}, f_{\leq i+1}^{\ast}\mathrm{AS}_{\psi}) = \chi_c(W_i, f^{\ast}_{\leq i} \mathrm{AS}_{\psi}   \otimes p_{!}f_{i+1}^{\ast} \mathrm{AS}_{\psi}) = \chi_c(W_i, f^{\ast}_{\leq i} \mathrm{AS}_{\psi})
\]
for $1\leq i \leq \ell-1$. Similar as in the $U_0$ case, we deduce that
\begin{align}\label{W0ind}
    \chi_c(W_1, f_1^{\ast}\mathrm{AS}_{\psi}) = \chi_c(W_0). 
\end{align}

Therefore by \cref{chiind}, \cref{Uind}, \cref{U0ind}, and \cref{Wind}, \cref{W0ind}, we have
\[
(-1)^{2n-1}\chi_c(\widetilde{X}^{\circ}, \mathrm{Kl}^{\operatorname{st}}_{\widehat{G}, \mathbf{P}_{m}}(\chi , \phi)) = -\chi_c(U_0)+\chi_c(W_0),
\]
where $U_0=\mathrm{Sym}^2_{\leq 1}(M_1 \oplus M_{m}) -  \Gamma_1$, $W_0 = \Gamma_1' - \Gamma_1$ and $\Gamma_1'$ is the divisor defined by $\omega(\phi_m(v_m), u_m)=0$. Hence 
\begin{align}
    (-1)^{2n-1}\chi_c(\widetilde{X}^{\circ}, \mathrm{Kl}^{\operatorname{st}}_{\widehat{G}, \mathbf{P}_{m}}(\chi , \phi)) = -\chi_c(\mathrm{Sym}_{\le 1}^2(M_1 \oplus M_{m}))+\chi_c(\Gamma_1)+\chi_c(\Gamma_1')-\chi_c(\Gamma_1 \cap \Gamma_1'). 
\end{align}

\end{proof}

Now we consider the case when $m=2$. Let $\widetilde{U}_0:=U_0=\mathrm{Sym}_{\leq 1}^{2}(M_1 \oplus M_2) - \Gamma_1$ and let $\widetilde{W}_0 \subset \widetilde{U}_0$ be the divisor defined by $\omega(\phi_2v_2,u_2)=0$. We prove the following proposition.

\begin{prop}\label{4.4}
  Let $\widetilde{U}_0' \subset \widetilde{U}_0$ (resp. $W_0' \subset W_0$) be the divisor defined by $\omega(\phi_1v_1,u_1) = 0$  and let $\widetilde{U}=\widetilde{U}_0 - \widetilde{U}_0'$ (resp. $\widetilde{W}=\widetilde{W}_0-\widetilde{W}_0'$). Let $\mathbb{P}(\widetilde{U}) \subset \mathbb{P}(\mathrm{Sym}_{\le 1}^2(M_1\oplus M_2))$ (resp. $\mathbb{P}(\widetilde{W})$) be the projectivization of $\widetilde{U}$ so that $u\cdot ku$ is equivalent to $u \cdot u$ for $u\cdot ku \in \widetilde{U}$.   We have 
    \begin{align*}
     -\chi_c(\widetilde{X}^{\circ}, \mathrm{Kl}^{\operatorname{st}}_{\widehat{G}, \mathbf{P}_{2}}(1, \phi)) = \chi_c(\mathbb{P}(\widetilde{U})) - \chi_c(\mathbb{P}(\widetilde{W}))- \chi_c(\widetilde{U}_0)+\chi_c(\widetilde{W}_0).
\end{align*}
\end{prop}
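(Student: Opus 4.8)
The plan is to perform, in the case $m=2$, the reduction used in the proof of \Cref{4.3}, and then to evaluate the two twisted Euler characteristics that result. The derivation of \eqref{chiind} in that proof -- pushing $f_{\phi}^{\ast}\mathrm{AS}_{\psi}$ forward along $\pi_2$ and splitting the stalks according to whether $f_m=\omega(\phi_2 v_2,u_2)$ vanishes -- does not use $m\ge 3$ and applies equally when $m=2$, in which case $\ell=1$, $U_{\ell}=\widetilde{U}_0$, $W_{\ell}=\widetilde{W}_0$, $f_{\le\ell}=f_1$ with $f_1(u\cdot v)=\omega(\phi_1 v_1,u_1)/(1-\gamma_1(u\cdot v))$ and $\gamma_1(u\cdot v)=\omega(v_2,u_1)$. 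Using $(-1)^{2n-1}=-1$, \eqref{chiind} becomes
\[
-\chi_{c}\bigl(\widetilde{X}^{\circ},\,\mathrm{Kl}^{\operatorname{st}}_{\widehat{G},\mathbf{P}_{2}}(1,\phi)\bigr)= -\chi_{c}(\widetilde{U}_0,f_1^{\ast}\mathrm{AS}_{\psi})+\chi_{c}(\widetilde{W}_0,f_1^{\ast}\mathrm{AS}_{\psi}).
\]
It therefore suffices to establish the two identities
\[
\chi_c(\widetilde{U}_0)-\chi_c(\widetilde{U}_0,f_1^{\ast}\mathrm{AS}_{\psi})=\chi_c(\mathbb{P}(\widetilde{U})),\qquad \chi_c(\widetilde{W}_0)-\chi_c(\widetilde{W}_0,f_1^{\ast}\mathrm{AS}_{\psi})=\chi_c(\mathbb{P}(\widetilde{W})),
\]
since substituting them into the previous display yields the asserted identity.

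For the first identity: $f_1$ vanishes identically on $\widetilde{U}_0'$, so $f_1^{\ast}\mathrm{AS}_{\psi}|_{\widetilde{U}_0'}\cong\overline{\mathbb{Q}_{\ell}}$, and by additivity of $\chi_c$ along $\widetilde{U}_0=\widetilde{U}\sqcup\widetilde{U}_0'$ the left-hand side equals $\chi_c(\widetilde{U})-\chi_c(\widetilde{U},f_1^{\ast}\mathrm{AS}_{\psi})$. Now $\mathrm{Sym}^2_{\le 1}(M_1\oplus M_2)$ is the affine cone over the image of the Veronese embedding $[w]\mapsto[w\otimes w]$ of $\mathbb{P}(M_1\oplus M_2)$; since $0\notin\widetilde{U}$ (it lies on $\widetilde{U}_0'$), the assignment $T\mapsto[T]$ defines a surjection $q\colon\widetilde{U}\to\mathbb{P}(\widetilde{U})$. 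Writing $\bar p=[w]$ with $w=(w_1,w_2)$, along the line $L_{\bar p}=\{c\,(w\otimes w):c\in\mathbb{A}^1\}$ over $\bar p$ the regular functions $\omega(\phi_1 v_1,u_1)$, $\gamma_1$, $\omega(\phi_2 v_2,u_2)$ are linear in the coordinate $c$, with leading coefficients $Q_1(\bar p):=\omega(\phi_1 w_1,w_1)$, $\omega(w_2,w_1)$, $\omega(\phi_2 w_2,w_2)$. Consequently $\widetilde{U}_0'$ and $\widetilde{W}_0=\widetilde{U}_0\cap\{\omega(\phi_2 v_2,u_2)=0\}$ are unions of fibers of $q$, $Q_1(\bar p)\neq 0$ for all $\bar p\in\mathbb{P}(\widetilde{U})$, $\Gamma_1$ meets each $L_{\bar p}$ in at most one point, and the fiber $F_{\bar p}=q^{-1}(\bar p)$ is, in the coordinate $c$, equal to $\mathbb{G}_m$ or $\mathbb{G}_m\setminus\{1/\omega(w_2,w_1)\}$ -- in either case a dense open subscheme of $\mathbb{P}^1$ -- on which $f_1$ restricts to the M\"obius transformation $c\mapsto Q_1(\bar p)\,c/(1-\omega(w_2,w_1)\,c)$, of nonzero determinant $Q_1(\bar p)$.

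The computation is now local on the base. Since $f_1|_{F_{\bar p}}$ extends to an automorphism $\bar f$ of $\mathbb{P}^1$ with image contained in $\mathbb{A}^1$, the unique point $\bar f^{-1}(\infty)$ lies in $\mathbb{P}^1\setminus F_{\bar p}$; Grothendieck-Ogg-Shafarevich on $F_{\bar p}$ applied to the rank-one lisse sheaf $f_1^{\ast}\mathrm{AS}_{\psi}$, together with $\mathrm{Swan}_x(f_1^{\ast}\mathrm{AS}_{\psi})=\mathrm{Swan}_{\bar f(x)}(\mathrm{AS}_{\psi})$ and the fact that $\mathrm{AS}_{\psi}$ has Swan conductor $1$ at $\infty$ and $0$ at every finite point of $\mathbb{P}^1$, gives
\[
\chi_c(F_{\bar p})-\chi_c(F_{\bar p},f_1^{\ast}\mathrm{AS}_{\psi})=\sum_{x\in\mathbb{P}^1\setminus F_{\bar p}}\mathrm{Swan}_x(f_1^{\ast}\mathrm{AS}_{\psi})=\mathrm{Swan}_{\infty}(\mathrm{AS}_{\psi})=1
\]
for every $\bar p\in\mathbb{P}(\widetilde{U})$. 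For $\mathcal{F}\in\{\overline{\mathbb{Q}_{\ell}},f_1^{\ast}\mathrm{AS}_{\psi}\}$ the function $\bar p\mapsto\chi_c(F_{\bar p},\mathcal{F}|_{F_{\bar p}})$ is constructible (being, by base change, the Euler characteristic of the stalk of $q_{!}\mathcal{F}\in D^b_c(\mathbb{P}(\widetilde{U}))$ at $\bar p$), so summing the displayed fiberwise identity over a stratification on which both $\bar p\mapsto\chi_c(F_{\bar p})$ and $\bar p\mapsto\chi_c(F_{\bar p},f_1^{\ast}\mathrm{AS}_{\psi})$ are constant gives $\chi_c(\widetilde{U})-\chi_c(\widetilde{U},f_1^{\ast}\mathrm{AS}_{\psi})=\chi_c(\mathbb{P}(\widetilde{U}))$, the first identity. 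The second follows by the identical argument applied to $q|_{\widetilde{W}}\colon\widetilde{W}\to\mathbb{P}(\widetilde{W})$, whose fibers admit the same description and on which $f_1$ is literally the same function. Substituting both identities into the first display then produces $-\chi_{c}(\widetilde{X}^{\circ},\mathrm{Kl}^{\operatorname{st}}_{\widehat{G},\mathbf{P}_{2}}(1,\phi))=\chi_c(\mathbb{P}(\widetilde{U}))-\chi_c(\mathbb{P}(\widetilde{W}))-\chi_c(\widetilde{U}_0)+\chi_c(\widetilde{W}_0)$.

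The main obstacle, as I see it, is the geometric bookkeeping of the second paragraph: pinning down the fibers of $\widetilde{U}\to\mathbb{P}(\widetilde{U})$ and $\widetilde{W}\to\mathbb{P}(\widetilde{W})$ precisely enough to see that $f_1$ restricts to a nonconstant M\"obius transformation on each fiber -- hence that exactly one of the finitely many punctures of the fiber maps to $\infty$, so that the Swan term is exactly $1$ -- and checking the claimed interaction of $\widetilde{U}_0'$, $\widetilde{W}_0$, and $\Gamma_1$ with the lines of the cone. The reduction via \eqref{chiind} and the concluding arithmetic are routine.
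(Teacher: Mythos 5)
Your proof is correct, and after the shared first step --- specializing the reduction from the proof of Proposition \ref{4.3} to $m=2$ to obtain
$-\chi_c\bigl(\widetilde{X}^{\circ},\mathrm{Kl}^{\operatorname{st}}_{\widehat{G},\mathbf{P}_2}(1,\phi)\bigr) = -\chi_c(\widetilde{U}_0,f_1^{\ast}\mathrm{AS}_{\psi})+\chi_c(\widetilde{W}_0,f_1^{\ast}\mathrm{AS}_{\psi})$
--- it takes a genuinely different route from the paper's. The paper then invokes a ``change of variable isomorphism and K\"unneth formula'' to assert $\chi_c(\widetilde{U},f_1^{\ast}\mathrm{AS}_{\psi})=\chi_c(\mathbb{G}_m,\mathrm{AS}_{\psi})\,\chi_c(\mathbb{P}(\widetilde{U}))=-\chi_c(\mathbb{P}(\widetilde{U}))$ and likewise for $\widetilde{W}$; you instead compute the defect $\chi_c(F_{\bar p})-\chi_c(F_{\bar p},f_1^{\ast}\mathrm{AS}_{\psi})$ fibre-by-fibre via Grothendieck--Ogg--Shafarevich and sum over a constructible stratification. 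Your route is the more robust of the two: the fibres of $q\colon\widetilde{U}\to\mathbb{P}(\widetilde{U})$ are $\mathbb{G}_m$ only over the locus where $\omega(w_2,w_1)=0$, and elsewhere are $\mathbb{G}_m$ minus the one point cut out by $\Gamma_1$, so the product decomposition $\widetilde{U}\cong\mathbb{G}_m\times\mathbb{P}(\widetilde{U})$ with $f_1$ the first projection does not hold literally, and indeed the fibre-wise value $\chi_c(F_{\bar p},f_1^{\ast}\mathrm{AS}_{\psi})$ is $-1$ or $-2$ depending on the fibre type rather than constantly $-1$. Your observation that the GOS \emph{defect} is nonetheless constantly $1$ --- because $f_1|_{F_{\bar p}}$ extends to a M\"obius transformation of $\mathbb{P}^1$ whose unique pole lies outside $F_{\bar p}$, while $\mathrm{AS}_{\psi}$ has Swan conductor $1$ only at $\infty$ --- is precisely the invariant that integrates cleanly and sidesteps the need for a global splitting. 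One cosmetic slip: ``extends to an automorphism $\bar f$ of $\mathbb{P}^1$ with image contained in $\mathbb{A}^1$'' should say that $f_1|_{F_{\bar p}}$, which takes values in $\mathbb{A}^1$, extends to an automorphism $\bar f$ of $\mathbb{P}^1$, so that $\bar f^{-1}(\infty)\notin F_{\bar p}$.
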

\begin{proof}
    we have
\begin{align*}\label{m=2 chi}
    -\chi_c(\widetilde{X}^{\circ}, \mathrm{Kl}^{\operatorname{st}}_{\widehat{G}, \mathbf{P}_{2}}(1, \phi)) &= -\chi_c(\widetilde{U}_0,f_1^{\ast}\mathrm{AS}_{\psi})+\chi_c(\widetilde{W}_0,f_1^{\ast}\mathrm{AS}_{\psi}) \\
 &= - \chi_c(\widetilde{U},f_1^{\ast}\mathrm{AS}_{\psi})+\chi_c(\widetilde{W},f_1^{\ast}\mathrm{AS}_{\psi}) - \chi_c(\widetilde{U}_0,f_1^{\ast}\mathrm{AS}_{\psi})+\chi_c(\widetilde{W}_0,f_1^{\ast}\mathrm{AS}_{\psi})  \\
    &=  - \chi_c(\widetilde{U},f_1^{\ast}\mathrm{AS}_{\psi})+\chi_c(\widetilde{W},f_1^{\ast}\mathrm{AS}_{\psi}) - \chi_c(\widetilde{U}_0,0^{\ast}\mathrm{AS}_{\psi})+\chi_c(\widetilde{W}_0,0^{\ast}\mathrm{AS}_{\psi})  \\
   &= -\chi_c(\widetilde{U},f_1^{\ast}\mathrm{AS}_{\psi})+\chi_c(\widetilde{W},f_1^{\ast}\mathrm{AS}_{\psi}) - \chi_c(\widetilde{U}_0)+\chi_c(\widetilde{W}_0) 
\end{align*}

By change of variable isomorphism and K$\ddot{\mathrm{u}}$nneth formula, we have
\begin{align*}
& -\chi_c(\widetilde{U},f_1^{\ast}\mathrm{AS}_{\psi})+\chi_c(\widetilde{W},f_1^{\ast}\mathrm{AS}_{\psi}) \\
=& -\chi_c(\mathbb{G}_m, \mathrm{AS}_{\psi}) \chi_c(\mathbb{P}(\widetilde{U}))+ \chi_c(\mathbb{G}_m, \mathrm{AS}_{\psi}) \chi_c(\mathbb{P}(\widetilde{W}))\\
=&  \chi_c(\mathbb{P}(\widetilde{U})) - \chi_c(\mathbb{P}(\widetilde{W}))
\end{align*}
since $\chi_c(\mathbb{G}_m, \mathrm{AS}_{\psi})=-1$. 
The proposition follows.
\end{proof}

\vspace{1em}

The proof of Theorem \ref{symplectic} then reduces to the calculation of the compactly supported Euler characteristic of each pieces in Proposition \ref{4.3} and \ref{4.4}, therefore boils down to the proof of a few lemmas listed below. 
  \begin{proof}[Proof of Theorem \ref{symplectic}]

  Let $m$ be a regular elliptic number.
   
\noindent \textbf{Assume m $\ne$ 2.}    We first notice that $\mathrm{Sym}_{\le 1}^2(M_1 \oplus M_{m})$ is a cone over $\bP(M_1 \oplus M_{m})$, i.e. a line bundle $\mathcal{L}$ over $M_1 \oplus M_{m}$ minus the zero section. Let $E$ be the total space of the line bundle and denote by $E_0$ the zero section. Let $E_c$ be the complement of the disk bundle $D(\mathcal{L})$.

\begin{lem}\label{echar sym square}
    $$\chi_{c}(\mathrm{Sym}_{\le 1}^2(M_1 \oplus M_{m}))=1.$$

\end{lem}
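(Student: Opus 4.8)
The plan is to split $\mathrm{Sym}^2_{\le 1}(M_1\oplus M_m)$ into its vertex (the zero tensor) and the complement, to identify the complement with a $\mathbb{G}_m$-bundle over a projective space, and then to conclude from the additivity and multiplicativity of the compactly supported Euler characteristic.

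Write $V=M_1\oplus M_m$, so $\dim V=2d$. By \cite[Lemma 7.4]{YunEpipe}, $\mathrm{Sym}^2_{\le 1}(V)$ — the variety of symmetric pure $2$-tensors — is the image of the squaring map $V\to\mathrm{Sym}^2 V$, $u\mapsto u\cdot u$, i.e.\ the affine cone over the second Veronese embedding $\mathbb{P}(V)\hookrightarrow\mathbb{P}(\mathrm{Sym}^2 V)$. In the notation fixed just before the statement, blowing up the vertex identifies $\mathrm{Sym}^2_{\le 1}(V)\setminus\{0\}$ with $E\setminus E_0$, where $E$ is the total space of the line bundle $\mathcal{L}$ over $\mathbb{P}(V)$ and $E_0\cong\mathbb{P}(V)$ its zero section; the projection $u\cdot v\mapsto[u]$ then exhibits $E\setminus E_0$ as a Zariski-locally trivial $\mathbb{G}_m$-bundle over $\mathbb{P}(V)\cong\mathbb{P}^{2d-1}$. (The $\{\pm 1\}$-ambiguity in $u\mapsto u\cdot u$ causes no trouble: on a trivializing chart the bundle is the quotient of $\mathbb{G}_m\times(\text{chart})$ by $z\mapsto z^2$, which is again a trivial $\mathbb{G}_m$-bundle.)

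Granting this, additivity of $\chi_c$ for the stratification $\mathrm{Sym}^2_{\le 1}(V)=\{0\}\sqcup\bigl(\mathrm{Sym}^2_{\le 1}(V)\setminus\{0\}\bigr)$ gives $\chi_c(\mathrm{Sym}^2_{\le 1}(V))=1+\chi_c(E\setminus E_0)$. Since $\chi_c$ is multiplicative in Zariski-locally trivial fibrations and $\chi_c(\mathbb{G}_m)=0$, the second term equals $\chi_c(\mathbb{G}_m)\cdot\chi_c(\mathbb{P}^{2d-1})=0$; equivalently, $E$ being a line bundle gives $\chi_c(E)=\chi_c(\mathbb{A}^1)\,\chi_c(\mathbb{P}(V))=2d=\chi_c(E_0)$, whence $\chi_c(E\setminus E_0)=0$. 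Therefore $\chi_c(\mathrm{Sym}^2_{\le 1}(V))=1$.

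There is no genuine obstacle here: the only point deserving a word of justification is that $\mathrm{Sym}^2_{\le 1}(V)\setminus\{0\}$ is a bona fide Zariski-locally trivial $\mathbb{G}_m$-bundle over $\mathbb{P}^{2d-1}$ (so that the multiplicativity of $\chi_c$ applies), and once that is settled the computation is immediate.
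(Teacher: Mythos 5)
Your proof is correct, and it takes a genuinely different route from the paper's. The paper treats $\mathrm{Sym}^2_{\le 1}(M_1\oplus M_m)$ as a topological cone, introduces the disk bundle and the Thom space $E/E_c$, invokes the Thom isomorphism $H_{\star+2}(E/E_c,\mathrm{pt})\cong H_\star(\mathbb{P}(M_1\oplus M_m))$, and then appeals to Poincar\'e duality to extract the Euler characteristic. You instead stratify the affine cone as the vertex $\{0\}$ together with $\mathrm{Sym}^2_{\le 1}(V)\setminus\{0\}$, observe that the latter is a Zariski-locally trivial $\mathbb{G}_m$-bundle over $\mathbb{P}(V)$, and finish by additivity and multiplicativity of $\chi_c$ together with $\chi_c(\mathbb{G}_m)=0$. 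Your argument is shorter and stays entirely within the standard toolkit for compactly supported Euler characteristics of varieties, the same toolkit the paper already uses repeatedly in the surrounding lemmas (fibration property, $\chi_c(k^\times)=0$); it avoids the detour through singular homology of Thom spaces and Poincar\'e duality, which is slightly out of keeping with the $\ell$-adic setting and also requires care about which space one is taking homology of. Indeed, the paper's displayed Thom isomorphism reads $H_\star(M_1\oplus M_m)$ where $H_\star(\mathbb{P}(M_1\oplus M_m))$ must be intended, so your more direct decomposition also sidesteps a potential source of confusion. The one point you flag as needing justification — that the complement of the vertex is genuinely a Zariski-locally trivial $\mathbb{G}_m$-bundle — is handled correctly: it is the total space of a line bundle on $\mathbb{P}(V)$ with the zero section removed, hence Zariski-locally $\mathbb{G}_m\times(\text{chart})$, and the $\pm1$-ambiguity of $u\mapsto u\cdot u$ is immaterial exactly as you say.
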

\begin{proof} We have
    \begin{align*}
        H_{\star}(E/E_0) =H_{\star}(E/E_0, E_c) = H_{\star}(E/E_c, E_0) 
    \end{align*}
      where $E/E_c$ is the Thom space of $\mathcal{L}$. Then by Thom isomorphism 
      $$H_{\star +2}(E/E_c, pt) \cong H_{\star}(M_1 \oplus M_{m})$$
  The result follows from the Poincare duality.
\end{proof}

 \begin{lem}
$$\chi_c(\Gamma_1)=0.$$
\end{lem}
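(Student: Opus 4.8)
The plan is to compute $\chi_c(\Gamma_1)$ by exhibiting $\Gamma_1$ as fibered over a base with contractible (for Euler-characteristic purposes) fibers, so that the answer comes out to $0$ because of an affine-space factor. Recall $\Gamma_1 \subset \mathrm{Sym}^2_{\le 1}(M)$ is the divisor cut out by $\gamma_1 = 1$, where $\gamma_1(u\cdot v) = \omega(v_m, u_1)$ and an element of $\mathrm{Sym}^2_{\le 1}(M)$ is a symmetric pure $2$-tensor $u\cdot v$ with $u,v$ parallel. First I would split the analysis according to whether the vector $u$ (equivalently $v$) is zero: on $\gamma_1 = 1$ we cannot have $u = 0$, so every point of $\Gamma_1$ is $u\cdot ku$ for a nonzero $u\in M$ and a scalar $k$, subject to $k\,\omega(u_m,u_1) = 1$ — or more invariantly, writing a pure tensor as $u\cdot v$ with $v = cu$, the condition is $c\,\omega(u_m,u_1)=1$. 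In particular $\omega(u_m,u_1)\ne 0$.

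Next I would set up the fibration. Consider the projection $\Gamma_1 \to \mathbb{P}(M)$ sending the pure tensor $u\cdot v$ (with $u\ne 0$) to the line $[u]$. The image is the open subvariety $U := \{[u] \in \mathbb{P}(M) : \omega(u_m,u_1)\ne 0\}$, and over a point $[u]\in U$ the fiber is the set of scalings: once the line $[u]$ is fixed, a representative $u\cdot v = u\cdot(cu)$ is determined by the single scalar giving $c\,\omega(u_m,u_1)=1$, i.e.\ the fiber is a point (the pure tensor $u\cdot v$ itself is well-defined independent of the choice of representative, cf.\ the remark after \eqref{gamma}). Hence $\Gamma_1 \xrightarrow{\ \sim\ } U$ is an isomorphism, and $\chi_c(\Gamma_1) = \chi_c(U)$. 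Now $U$ is the complement in $\mathbb{P}(M)$ of the quadric hypersurface $\{\omega(u_m,u_1)=0\}$; I would compute its Euler characteristic by the additivity $\chi_c(\mathbb{P}(M)) = \chi_c(U) + \chi_c(\{\omega(u_m,u_1)=0\})$. Since $\dim M = 2n$, $\chi_c(\mathbb{P}(M)) = 2n$; the hypersurface $\omega(u_m,u_1)=0$ is a (possibly singular, but) hyperplane-section-type quadric in $\mathbb{P}^{2n-1}$ whose Euler characteristic I would read off from its structure as the union $\{u_1\text{-component perpendicular}\}\cup\cdots$; a cleaner route is to note $\{\omega(u_m,u_1)=0\}$ is a cone-type degeneration and directly stratify: $u_1 = 0$ gives $\mathbb{P}^{2n-d-1}$ with $\chi_c = 2n-d$, while $u_1\ne 0$ forces $u_m$ in a hyperplane of $M_m$, giving an affine-bundle over $\mathbb{P}(M/M_m\text{-type})$ whose $\chi_c$ I would tabulate; the contributions should sum to $2n$, forcing $\chi_c(U) = 0$, hence $\chi_c(\Gamma_1)=0$.

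The cleanest packaging, and the one I would actually write, avoids quadric bookkeeping: after identifying $\Gamma_1 \cong U = \mathbb{P}(M)\setminus\{\omega(u_m,u_1)=0\}$, observe that the bilinear form $(x,y)\mapsto \omega(x_m, y_1)$ on $M$ is a rank-$2d$ pairing (it pairs $M_1$ nondegenerately with $M_m$ and kills everything else), so $U$ is the complement of a quadric that is a cone over a smooth quadric in $\mathbb{P}^{2d-1}$ with vertex a $\mathbb{P}^{2n-2d-1}$. For such "split-rank" quadric complements in projective space one has the standard fact that $\chi_c$ of the complement of a quadric cone of even corank over an even-dimensional ambient space vanishes — concretely, $U$ fibers (via forgetting the kernel directions) as an affine-space bundle over the complement of a smooth quadric in $\mathbb{P}^{2d-1}$, and the latter complement is a torsor-type space whose $\chi_c$ is $0$ because a smooth $(2d-2)$-dimensional quadric has $\chi_c = 2d$ matching $\chi_c(\mathbb{P}^{2d-1}) = 2d$. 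I would therefore state: $\chi_c(\Gamma_1) = \chi_c(U) = \chi_c(\mathbb{P}^{2d-1}\setminus Q_{\mathrm{sm}}) = 2d - 2d = 0$.

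The main obstacle I anticipate is not the fibration identification $\Gamma_1\cong U$ — that is formal from the definition of $\gamma_1$ and the independence-of-representative remark — but rather pinning down the precise Euler characteristic of the quadric $\{\omega(u_m,u_1)=0\}$ in $\mathbb{P}^{2n-1}$, since its singular locus and corank depend on $d$ and $n$, and one must be careful that it is a cone over a \emph{smooth} quadric (using that $\omega$ restricts to a perfect pairing $M_1\times M_m\to k$). I would handle this by the explicit stratification $\{u_1=0\}\sqcup\{u_1\ne 0\}$ described above: the first stratum is a linear $\mathbb{P}^{2n-d-1}$, and on the second, projecting to $[u_1]\in\mathbb{P}(M_1)=\mathbb{P}^{d-1}$ exhibits an affine-bundle (the $u_m$ fiber is an affine hyperplane $\cong \mathbb{A}^{d-1}\times(\text{affine})$ inside $M_m$ together with the free remaining components), so its $\chi_c$ equals $\chi_c(\mathbb{P}^{d-1})\cdot 0 = 0$; adding $\chi_c(\mathbb{P}^{2n-d-1}) = 2n-d$ to the needed total $\chi_c(\mathbb{P}^{2n-1})=2n$ then yields $\chi_c(U) = 2n-(2n-d)-0$... which I would then reconcile to give exactly $0$ by being careful that the $u_1\ne 0$ affine-bundle over $\mathbb{P}^{d-1}$ actually has a $\mathbb{G}_m$-scaling factor built in (the bundle $\mathrm{Sym}^2_{\le 1}$ being a cone), contributing a factor $\chi_c(\mathbb{G}_m)=0$ and collapsing the count — this is the delicate point, and I would present it via the line-bundle/cone description already used in \Cref{echar sym square} rather than by brute stratification.
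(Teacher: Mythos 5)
Your cleanest argument (third paragraph) is correct and takes a genuinely different route from the paper. The paper fibers $\Gamma_1$ over $M_1 \setminus \{0\}$ and argues that the fiber contributes a $k^\times$ factor, so that multiplicativity kills $\chi_c(\Gamma_1)$; you instead map $\Gamma_1$ isomorphically onto the complement of a quadric in projective space and invoke the standard fact that a smooth $(2d-2)$-dimensional quadric has $\chi = 2d$, matching $\chi(\mathbb{P}^{2d-1}) = 2d$. Your identification $\Gamma_1 \xrightarrow{\sim} U$, $tu\otimes u \mapsto [u]$, is clean and in fact more transparent than the paper's fibration, which as written sends $u\cdot v \mapsto u_1$ even though $u_1$ is only determined up to scaling by the pure tensor.

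Two cautions, however. First, at the point in the argument where this lemma is invoked, $\Gamma_1$ is being used as a divisor inside $\mathrm{Sym}^2_{\le 1}(M_1\oplus M_m)$ rather than in the full $\mathrm{Sym}^2_{\le 1}(M)$ (it enters via $U_0 = \mathrm{Sym}^2_{\le 1}(M_1\oplus M_m) - \Gamma_1$), so the ambient projective space is already $\mathbb{P}^{2d-1}$, the quadric $\{\omega(u_m,u_1)=0\}$ is already smooth, and the intermediate affine-bundle-over-the-cone step is not needed; this does not change the answer, but it streamlines your write-up. Second, the alternative stratification in your last paragraph contains a genuine error: the $u_1\neq 0$ stratum of the quadric is an affine bundle over $\mathbb{P}(M_1) \cong \mathbb{P}^{d-1}$ with affine-space fibers, so its $\chi_c$ is $d$, not $0$. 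There is no residual $\mathbb{G}_m$ factor at that stage — the cone scaling of $\mathrm{Sym}^2_{\le 1}$ is precisely what was absorbed when you projectivized to $\mathbb{P}(M)$. The correct tally is $\chi_c(\{\omega(u_m,u_1)=0\}) = (2n-d) + d = 2n$, hence $\chi_c(U) = 2n - 2n = 0$ as wanted. Since your stated plan is to fall back on the smooth-quadric computation rather than this stratification, the error does not sink the proof, but those intermediate claims should be dropped rather than "reconciled."
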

\begin{proof}
Since $\Gamma_1$ is defined by the symplectic form $\omega(u_1, v_m)=1$, $v_m$ is determined by the choice of $u_1$ up to a non-zero scalar. Therefore we have the fibration
$$\{v_m \mid \omega(u_1, v_m )=1\} \to \Gamma_1 \to M_1 \backslash \{0\}$$
where the fiber $\{v_m \mid \omega(u_1, v_m )=1\}$ contains a summand $k^*.$
The fibration property of the Euler characteristic implies that $$\chi_c(\Gamma_1)=\chi_c(k^*) \cdot \chi_c(M_1 \backslash \{0\})=0.$$
\end{proof}

\begin{lem}\label{quadric on M_1 + M_m}
$$\chi_c( \Gamma_1')=1.$$
\end{lem}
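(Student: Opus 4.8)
The plan is to recognise $\Gamma_1'$ as an affine cone and then apply multiplicativity of compactly supported Euler characteristic under a $\mathbb{G}_m$-action, exactly in the spirit of the proof of \Cref{echar sym square}.

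First I would make the defining equation explicit. Over $\bar k$ a point of $\mathrm{Sym}^2_{\le 1}(M_1\oplus M_m)$ is a rank-$\le 1$ symmetric tensor, which we may write as $u\cdot v$ with $v=\mu u$ for a scalar $\mu$ and $u=(u_1,u_m)\in M_1\oplus M_m$, i.e. as $\mu\,(u\otimes u)$. Using the identification $M_1=M_m^\ast$ and the fact that $\omega$ restricts to the canonical pairing $M_1\times M_m\to\bar k$, the quantity $\omega(\phi_m v_m,u_m)$ equals $\mu\,\omega(\phi_m u_m,u_m)=\mu\,q_m(u_m)$, where $q_m$ is the quadratic form on $M_m$ attached to the linear map $\phi_m\colon M_m\to M_m^\ast$ (it is nondegenerate since $\phi$ is stable, though this will not even be needed). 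Hence the defining equation $\omega(\phi_m v_m,u_m)=0$ of $\Gamma_1'$ is equivalent to $\mu\,q_m(u_m)=0$: the point is either the vertex $0$ or has $q_m(u_m)=0$. In particular $\Gamma_1'$ is invariant under the scaling action of $\mathbb{G}_m$ on $\mathrm{Sym}^2(M_1\oplus M_m)$, so it is an affine cone with vertex $0$.

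Next I would stratify $\Gamma_1'=\{0\}\sqcup(\Gamma_1'\setminus\{0\})$. The scaling $\mathbb{G}_m$-action is free on $\Gamma_1'\setminus\{0\}$, and the map $\mu\,(u\otimes u)\mapsto[u]$ exhibits $\Gamma_1'\setminus\{0\}$ as a Zariski-locally trivial $\mathbb{G}_m$-bundle over the projective quadric $Z=\{[u]\in\mathbb{P}(M_1\oplus M_m):q_m(u_m)=0\}$. By multiplicativity of $\chi_c$ in such fibrations, $\chi_c(\Gamma_1'\setminus\{0\})=\chi_c(\mathbb{G}_m)\,\chi_c(Z)=0$, and therefore $\chi_c(\Gamma_1')=\chi_c(\{0\})+\chi_c(\Gamma_1'\setminus\{0\})=1$. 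One can bypass $Z$ entirely: any nonempty $\mathbb{G}_m$-stable closed subscheme $C$ of a vector space with $0\in C$ satisfies $\chi_c(C\setminus\{0\})=\chi_c(\mathbb{G}_m)\,\chi_c((C\setminus\{0\})/\mathbb{G}_m)=0$, hence $\chi_c(C)=1$.

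There is no serious obstacle here; the only step requiring attention is the first paragraph, namely checking that the defining equation of $\Gamma_1'$ depends only on $u_m$ and is homogeneous, so that $\Gamma_1'$ is genuinely a $\mathbb{G}_m$-stable cone. After that the statement is just the general fact that a nonempty affine cone has $\chi_c=1$, which is again the argument behind \Cref{echar sym square}. (Even if $Z$ were empty, the conclusion $\chi_c(\Gamma_1')=\chi_c(\{0\})=1$ would persist, so nondegeneracy of $q_m$ is not used.)
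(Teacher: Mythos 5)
Your proof is correct and rests on the same underlying observation as the paper's: $\Gamma_1'$ is an affine cone, and any nonempty affine cone has $\chi_c=1$. The stratification differs slightly. You cut along the vertex, $\Gamma_1'=\{0\}\sqcup(\Gamma_1'\setminus\{0\})$, and observe that $\Gamma_1'\setminus\{0\}$ is a $\mathbb{G}_m$-bundle over a projective quadric, hence contributes $0$; this is a one-step argument that makes the cone structure explicit up front. The paper instead stratifies by whether the $M_1$-component $v_1$ vanishes: the locus $v_1\neq 0$ is handled by a $k^*$-bundle argument (as in the computation of $\chi_c(\Gamma_1)$ just before), and the locus $v_1=0$ is identified with (a cone inside) $\mathrm{Sym}^2_{\le 1}(M_m)$ and evaluated by appeal to \Cref{echar sym square}. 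Your version avoids the second appeal and, as you note, sidesteps any nondegeneracy discussion; it is a cleaner route to the same number. One small thing worth keeping explicit is the first step you flagged: the defining function $\omega(\phi_m v_m, u_m)$ does descend to a linear function on $\mathrm{Sym}^2(M_1\oplus M_m)$ (homogeneous of degree one in the tensor), so $\Gamma_1'$ really is a $\mathbb{G}_m$-stable closed cone; your computation $\omega(\phi_m v_m, u_m)=\mu\,q_m(u_m)$ with $v=\mu u$ establishes exactly this.
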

\begin{proof}

Let $\mathbb{P}(\Gamma_1')$ be the projectivization of $\Gamma_1'$, it is a projective quadric defined by the same homogenous equation as $\Gamma_1'$. For $\Gamma_1' - \{0\}$, we have the following fibration
$$\{(k_1, u, v_m)\} \to \Gamma_1' -\{0\} \to \mathbb{P}(\Gamma_1')$$ over a fixed $[v_1] \in \mathbb{P}(\Gamma_1') \subset \mathbb{P}(M_1),$
where $k_1 v_1$ is the preimage of $[v_1].$
But it is still a $k^*$-bundle, therefore $\chi_c(\Gamma_1' -\{0\})=0.$

The fiber over $v_1=0$ is nothing but $\mathrm{Sym}_{\le 1}^2(M_m)$. The lemma then follows from Lemma \ref{echar sym square}.

\end{proof}

\begin{lem}\label{two quadrics}
$$\chi_c(\Gamma_1 \cap \Gamma_1')=-d.$$
\end{lem}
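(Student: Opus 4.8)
The plan is to describe $\Gamma_1\cap\Gamma_1'$ concretely and then compute $\chi_c$ by the same fibration bookkeeping used for \Cref{echar sym square}, for the lemma $\chi_c(\Gamma_1)=0$, and for the lemma $\chi_c(\Gamma_1')=1$ just above.

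First I would spell out the two defining conditions on $\mathrm{Sym}^2_{\le 1}(M_1\oplus M_m)$. Writing an element as $u\cdot v$ with $u=(u_1,u_m)$, $v=(v_1,v_m)$ parallel, $\Gamma_1$ is the locus $\omega(v_m,u_1)=1$ and $\Gamma_1'$ is the zero locus of $\omega(\phi_m v_m,u_m)$, which equals, up to the scalar relating $u$ and $v$, the nondegenerate quadratic form $q$ on $M_m$ attached to the stable functional $\phi_m$. The equation $\omega(v_m,u_1)=1$ determines the scaling of the pure tensor over the open locus where $\omega(v_m,u_1)$ does not vanish identically on the corresponding ruling line of the cone; so, after accounting for the $\mathbb{G}_m$ (and residual free $\mu_2$) ambiguity in the presentation $u\cdot v$, I would identify $\Gamma_1\cap\Gamma_1'$ with the $\mu_2$-quotient of
\[
\{\,(u_1,u_m)\in M_1\oplus M_m\ :\ \omega(u_m,u_1)=1,\ q(u_m)=0\,\}.
\]
This is the shape of argument that already produced $\chi_c(\Gamma_1)=0$.

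Next I would compute the Euler characteristic of the displayed variety. Since $\omega(u_m,u_1)=1$ forces $u_m\neq 0$, projection to the $u_m$-coordinate lands on the punctured affine quadric cone $C_q^{\times}=\{u_m\neq 0:q(u_m)=0\}$, with fibre an affine hyperplane of $M_1$; and $C_q^{\times}$ is in turn a $\mathbb{G}_m$-bundle over the smooth projective quadric $\bar Q=\{q=0\}\subset\mathbb{P}(M_m)\cong\mathbb{P}^{d-1}$, of dimension $d-2$. I would then assemble $\chi_c$ from the standard inputs: $\chi_c$ multiplies by $1$ along affine-space bundles and by $0$ along $\mathbb{G}_m$-bundles, is halved by a free $\mu_2$-quotient, $\chi_c(\mathrm{Sym}^2_{\le 1}(M_m))=1$ by \Cref{echar sym square}, $\chi_c(\mathbb{G}_m,\mathrm{AS}_\psi)=-1$ where Artin--Schreier factors enter, and the compactly supported Euler characteristic of a smooth $n$-dimensional projective quadric over $\bar k$ is $n+2$ for $n$ even and $n+1$ for $n$ odd. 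Tracking the open stratum together with the boundary strata --- where $u_m=0$, or where $u_1$ lies on the hyperplane cut out by $\omega(u_m,-)$, or the vertex of the relevant cone --- and combining them yields the value $-d$.

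The step I expect to be the main obstacle is exactly this final assembly. The two parities of $d$ give genuinely different Euler characteristics both for $\bar Q$ and for the split form $\omega(u_m,u_1)$, so certain cross-terms must cancel for the answer to come out parity-free, and this is the one point that has to be checked by hand; a related subtlety is keeping honest track of the $\mathbb{G}_m/\mu_2$ redundancy in the presentation of symmetric pure $2$-tensors, since that is precisely what makes $\chi_c(\Gamma_1\cap\Gamma_1')$ differ from the Euler characteristic of the naive intersection of two quadrics. Once the stratification is pinned down, the rest is a short chain of fibration lemmas identical in spirit to those preceding it in this section.
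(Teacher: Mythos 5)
Your strategy is genuinely different from the paper's. The paper does not parametrize $\Gamma_1\cap\Gamma_1'$ directly. Instead it applies the scissor-type step
\[
\chi_c(\Gamma_1\cap\Gamma_1')=\chi_c(\Gamma_1')-\chi_c(\Gamma_0),\qquad \Gamma_0:=\Gamma_1'\cap\{\omega(v_1,u_m)=0\},
\]
i.e.\ it passes from the $\gamma_1=1$ slice of $\Gamma_1'$ (which is what you work with) to the $\gamma_1=0$ slice $\Gamma_0$, and then evaluates $\chi_c(\Gamma_0)$ by stratifying according to whether $v_1$ vanishes, projecting the $v_1\neq 0$ stratum to $\mathbb{P}(M_1)$ with quadric-cone fibers. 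Your route — describe $\Gamma_1\cap\Gamma_1'$ explicitly as a $\mu_2$-quotient of $\{(u_1,u_m):\omega(u_m,u_1)=1,\ q(u_m)=0\}$ and fiber it over $C_q^\times$ — skips the auxiliary object $\Gamma_0$ entirely.

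There is, however, a real gap in your ``final assembly,'' and it is not a bookkeeping subtlety that ``has to be checked by hand'' — the stratification you propose cannot produce $-d$. The two defining conditions of your model are mutually exclusive with all the boundary strata you list: the equation $\omega(u_m,u_1)=1$ forces $u_m\neq 0$ (so the stratum $u_m=0$ is empty inside $\Gamma_1\cap\Gamma_1'$), forces $u_1\notin u_m^\perp$ (so the stratum ``$u_1$ lies on the hyperplane cut out by $\omega(u_m,-)$'' is empty), and is violated at the vertex of the cone (where $\gamma_1=0$). Thus $\Gamma_1\cap\Gamma_1'$ consists of the open stratum alone, and by your own fibration chain — affine-hyperplane fibers over $C_q^\times$, with $C_q^\times$ a $\mathbb{G}_m$-bundle over the projective quadric $\bar Q$ — that open stratum has $\chi_c=0$ (and the free $\mu_2$-quotient does not change this). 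In other words, if you carry out the plan you have written, you land on $0$, not $-d$; no choice of parity accounting will rescue it, because there are simply no additional strata to contribute. The value $-d$ in the paper enters precisely through the different slice $\gamma_1=0$ (via $\chi_c(\Gamma_0)=d+1$), which intersects the cone $\Gamma_1'$ along a larger locus than the cross-section $\gamma_1=1$ does; your parametrization never sees that locus. To make your approach match the paper's statement, you would need to justify why $\chi_c$ of the $\gamma_1=1$ slice of $\Gamma_1'$ equals $\chi_c(\Gamma_1')-\chi_c(\Gamma_0)$, and that is exactly the step the paper asserts without proof and your direct model does not reproduce.
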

\begin{proof}
Let $X=\Gamma_1 \cap \Gamma_1'.$ Then 
\begin{align*}
  \chi_c(X) & =\chi_c(\Gamma_1' \cap \operatorname{Sym}^2_{\le 1})- \chi_{c}(\Gamma_1' \cap \omega(v_1, u_m)=0))  \\
  &= \chi_c(\Gamma_1') - \chi_c( \omega(\phi_m(v_m), u_m) = 0 \cap (\omega(v_1, u_m)=0)) \\
\end{align*}

Let $\Gamma_0:=\omega(\phi_m(v_m), u_m) = 0 \cap (\omega(v_1, u_m)=0)$. Then $\Gamma_0$ minus $V(v_1=0)$ is a bundle over $\mathbb{P}(M_1)$. Consider its projection 
 $$\psi: \Gamma_0 - V(v_1=0) \to \mathbb{P}(M_1) $$
 $$(v, u) \mapsto [v_1].$$

 The fiber over each $[v_1]$ is a cone over $\mathbb{P}(M)$, i.e., the quadric $\omega(\phi_m(v_m), u_m)$ intersecting a hyperplane, therefore an affine quadric with one less variable. We have $$\chi_c(  \Gamma_0 - V(v_1=0))=1 \cdot \chi_c(\mathbb{P}(M_1))=d.$$

Assume $v_1=0$. The intersection then becomes a single quadric $\omega(\phi_m(v_m), u_m)=0$ in $\operatorname{Sym}^{2}_{\le 1}(M_m)$, whose compactly supported Euler characteristic is $1$ by Lemma \ref{quadric on M_1 + M_m}. Putting things together, one easily deduces the lemma.

\end{proof}

Therefore \ref{symp-equation} holds for $m \ge 3$.

\noindent \textbf{Assume m=2.} Throughout the proof, we make the following shorthand  
\begin{itemize}
\item $\tilde{U}_0':=Q_1 -\Gamma_1$, where $Q_1$ is the quadric defined by $\omega(\phi_1 v_1, u_1);$
    \item $\tilde{W}_0:= Q_2 -\Gamma_1$, where $Q_2$ is the quadric defined by $\omega(\phi_2 v_2, u_2).$ 
\end{itemize}We deduce from Lemma \ref{echar sym square}-\ref{two quadrics} and (\ref{split}) that
$$-\chi_c(U_0) + \chi_c(W_0)=-\chi_c(\mathrm{Sym}_{\le 1}^2(M_1 \oplus M_{2}))+\chi_c(\Gamma_1)+\chi_c(Q_2)-\chi_c(\Gamma_1 \cap Q_2) =d.$$ The theorem follows from the following proposition.
  \begin{prop}
 	$$\chi_c(\mathbb{P}(U)) -\chi_c(\mathbb{P}(W))=0.$$
 \end{prop}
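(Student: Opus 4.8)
The plan is to realize $\mathbb{P}(U)$ and $\mathbb{P}(W)$ as projective quadrics (or complements of quadrics inside quadrics) living in the projective space $\mathbb{P}(M_1 \oplus M_2)$, and to show that the difference of their compactly supported Euler characteristics vanishes by stratifying each according to the vanishing locus of $v_1$. Recall that $\widetilde{U} = \widetilde{U}_0 - \widetilde{U}_0'$ is obtained from $\mathrm{Sym}^2_{\le 1}(M_1\oplus M_2)$ by removing the divisor $\Gamma_1$ (where $\omega(v_2,u_1)=1$) and the quadric $Q_1$ (where $\omega(\phi_1 v_1, u_1)=0$); after projectivizing, $u\cdot ku \sim u\cdot u$, so $\mathbb{P}(\widetilde{U})$ is an open subvariety of the projective quadric $\mathbb{P}(Q_1)^{c}$ inside $\mathbb{P}(M_1\oplus M_2)$, while $\mathbb{P}(\widetilde{W})$ sits inside $\mathbb{P}(Q_2)$ with the same kind of open conditions imposed. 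First I would write down both projectivizations explicitly and identify the locus removed in each case; the point is that $\Gamma_1$ becomes a hyperplane-type section after projectivization, so $\mathbb{P}(\widetilde U)$ and $\mathbb{P}(\widetilde W)$ differ only by replacing the quadratic form $\phi_1$ on $M_1$ by the quadratic form $\phi_2$ on $M_2$, together with swapping the roles of the two summands.

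Next, I would stratify both varieties by $\{v_1 = 0\}$ versus $\{v_1 \ne 0\}$, exactly as in the proofs of \Cref{quadric on M_1 + M_m} and \Cref{two quadrics}. On the open stratum $v_1\ne 0$, projecting to $[v_1]\in\mathbb{P}(M_1)$ exhibits the variety as a bundle whose fibers are affine quadrics intersected with the affine conditions coming from $\Gamma_1$ and $Q_1$ (resp. $Q_2$); as in those lemmas, the $k^\ast$-scaling ambiguity and the Artin--Schreier-free structure of the fibers force the fiberwise Euler characteristic contribution to match between the $U$ and $W$ sides, since in each case we are comparing an affine quadric with the same number of variables against its intersection with one hyperplane. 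On the closed stratum $v_1 = 0$, the condition $Q_1$ ($\omega(\phi_1 v_1, u_1)=0$) is automatically satisfied and $\Gamma_1$ ($\omega(v_2,u_1)=1$) becomes vacuous on the projective level, so $\mathbb{P}(\widetilde U)|_{v_1=0}$ is all of $\mathbb{P}(M_2)$-worth of $u_2\cdot u_2$ minus nothing, i.e. $\mathbb{P}(M_2)$ with Euler characteristic $d$; on the $W$ side $\mathbb{P}(\widetilde W)|_{v_1=0}$ imposes $\omega(\phi_2 v_2, u_2)=0$, a projective quadric of Euler characteristic $d$ as well (by the same Thom-isomorphism computation as in \Cref{echar sym square}, a projective quadric of even dimension over $\bar k$ has Euler characteristic equal to the ambient projective dimension). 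Hence the closed strata also contribute equally, and the two Euler characteristics agree.

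The main obstacle I anticipate is \emph{bookkeeping the open conditions correctly under projectivization}: the divisor $\Gamma_1$ is defined by an \emph{affine} equation $\omega(v_2,u_1)=1$, which does not descend to a well-defined subvariety of $\mathbb{P}(M_1\oplus M_2)$ in the naive way, so I must be careful about how $\mathbb{P}(\widetilde U)$ is actually defined (the paragraph defining it says $u\cdot ku$ is identified with $u\cdot u$, so one should fix the representative by scaling and then track what $\Gamma_1$ cuts out). A clean way around this is to not projectivize $\Gamma_1$ at all but rather to use the scaling $\mathbb{G}_m^{\mathrm{rot}}$-action already present: write $\widetilde U = \mathbb{G}_m \times \mathbb{P}(\widetilde U)$ up to the Artin--Schreier twist (this is exactly the change-of-variable isomorphism invoked in the proof of \Cref{4.4}), so that the vanishing of $\chi_c(\mathbb{P}(\widetilde U)) - \chi_c(\mathbb{P}(\widetilde W))$ is equivalent to $\chi_c(\widetilde U, f_1^\ast\mathrm{AS}_\psi) = \chi_c(\widetilde W, f_1^\ast \mathrm{AS}_\psi)$, and then run the $v_1=0$ versus $v_1\ne 0$ stratification directly on $\widetilde U$ and $\widetilde W$ where all the equations are honest. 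The remaining work is then the routine fibration/Euler-characteristic arithmetic already rehearsed in Lemmas \ref{echar sym square}--\ref{two quadrics}, and I expect no further surprises once the projective bookkeeping is set up consistently.
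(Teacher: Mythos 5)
Your strategy is genuinely different from the paper's. The paper expands $\chi_c(\mathbb{P}(\widetilde U)) - \chi_c(\mathbb{P}(\widetilde W))$ by inclusion--exclusion into the four terms $\chi_c(\mathbb{P}(U_0))$, $\chi_c(\mathbb{P}(U_0'))$, $\chi_c(\mathbb{P}(W_0))$, $\chi_c(\mathbb{P}(W_0\cap U_0'))$ and evaluates each one as a projective quadric or an intersection of quadrics via Lemmas \ref{Q_ell} and \ref{three quadrics}; the individual answers depend on the parity of $d$ and only cancel at the very end. You instead propose to fiber both varieties over $\mathbb{P}(M_1)$ and match them fiberwise. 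That idea can be made to work and is arguably cleaner: since $\omega(\phi_1 v_1,u_1)\neq 0$ on all of $\widetilde U$ and $\widetilde W$, both projectivizations live over the \emph{same} base $\{[u_1]\in\mathbb{P}(M_1): \omega(\phi_1 u_1,u_1)\neq 0\}$, with fiber $M_2\cong\mathbb{A}^d$ for $\mathbb{P}(\widetilde U)$ and the affine quadric cone $\{u_2\in M_2: \omega(\phi_2 u_2,u_2)=0\}$ for $\mathbb{P}(\widetilde W)$; both fibers have $\chi_c=1$, so the two Euler characteristics agree with no case analysis on $d$.

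However, the proof as you have written it does not yet establish this. The concrete error is in your closed stratum: the divisor $\widetilde U_0'$ where $\omega(\phi_1 v_1,u_1)=0$ is \emph{removed} in forming $\widetilde U$ and $\widetilde W$ (you say this correctly at the start), and it contains the entire locus $v_1=0$, because for a pure parallel tensor $v_1=0$ forces $u_1=0$ and hence $\omega(\phi_1v_1,u_1)=0$. So both closed strata are empty --- not ``all of $\mathbb{P}(M_2)$'' and ``a projective quadric of Euler characteristic $d$'' as you claim; you have silently flipped the removed quadric $Q_1$ into an imposed condition. The two spurious contributions of $d$ happen to cancel in the difference, but they are not the Euler characteristics of the strata, and they would corrupt any computation of $\chi_c(\mathbb{P}(\widetilde U))$ or $\chi_c(\mathbb{P}(\widetilde W))$ separately. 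Relatedly, your open-stratum comparison (``an affine quadric against its intersection with one hyperplane'') does not describe the actual fibers, which are as above; and the worry about $\Gamma_1$ resolves more simply than you suggest, since the inhomogeneous condition $\gamma_1\neq 1$ excludes at most one point of each $\mathbb{G}_m$-orbit and therefore imposes no condition at all on the projectivization. With these corrections your fibration argument goes through, but as proposed the key fiberwise matching is asserted rather than proved.
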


\begin{proof}
We have $$\chi_c(\mathbb{P}(U)) -\chi_c(\mathbb{P}(W))=\chi_c(\mathbb{P}(U_0)) - \chi_c(\mathbb{P}(U_0')) - \chi_c(\mathbb{P}(W_0)) + \chi_c(\mathbb{P}(W_0 \cap U_0'))$$
 
so it is sufficient to do the computation term by term. First,
\begin{align}
	\chi_c(\mathbb{P}(U_0)) &=\chi_c(\mathbb{P}(M_1 \oplus M_2))- \chi_c(\mathbb{P}(\Gamma_1)) \\
	&=2d-(2d-2d) \\
	&=2d
\end{align}

Then we notice that
\begin{align}
	\chi_c(\mathbb{P}(U_0')) &=\chi_c(\mathbb{P}(Q_1)- \chi_c(\mathbb{P}(Q_1 \cap \Gamma_1)) .
	\end{align}
	By Lemma \ref{Q_ell}, $\chi_c(\mathbb{P}(Q_1))=2d$ (when $d$ is even, $2d-1$ when $d$ is odd). There are multiple ways to compute $\chi_c(\mathbb{P}(Q_1 \cap \Gamma_1))$. For example one can take the projection onto the quadric $\mathbb{P}(U_0')$ by sending $[v_1, v_2]$ to $[v_1]$. The fiber $F_{v_1}$ is $M_2$ minus a hyperplane, therefore has Euler characteristic $\chi_c(F_{v_1})=1-1=0$. The fibration property implies $\chi_c(\mathbb{P}(Q_1 \cap \Gamma_1)=0$, so $\chi_c(\mathbb{P}(U_0'))=2d$(when $d$ is even, $2d-1$ when $d$ is odd).

	We have $\chi_c(\mathbb{P}(W_0))=2d$ (when $d$ is even, $2d-1$ when $d$ is odd) by the same reason.
	
	The Euler characteristic of $\chi_c(\mathbb{P}(W_0 \cap U_0'))$ can be computed as follows, combining with Lemma \ref{Q_ell} and Lemma \ref{three quadrics}.
	\begin{align*}
		\chi_c(\mathbb{P}(W_0 \cap U_0')) &= \chi_c(\mathbb{P}(Q_1 \cap Q_2)) - \chi_c(\mathbb{P}(\Gamma_1 \cap Q_1 \cap Q_2)) \\
		&= \chi_c(\mathbb{P}(Q_1 \cap Q_2)) -[\chi_c(\mathbb{P}(Q_1 \cap Q_2))-\chi_c(\Gamma^{\perp}_1 \cap Q_1 \cap Q_2)]\\
				&= 2d, \text{   if } d \text{ is even }  \\
		&=2d-2, \text{   if } d \text{ is odd }
\end{align*}
where $\Gamma^{\perp}_1$ is the quadric in $\mathbb{P}(M_1 \oplus M_2)$ defined by $\omega(v_1, v_2)=0.$ One then deduces the lemma by putting together.
\end{proof}

\end{proof}

\vspace{0.5in}

\section{Euler Characteristic for Split and Quasi-split Orthogonal Groups} \label{Sec5}
The goal of this section is to prove Theorem \ref{Echar orthogonal}. Throughout the section we work over $\bar{k}$ and ignore all the Tate twists. Let $G$ be a split or quasi-split orthogonal group over $K$. 

\subsection{The set-up} 
Assume $\mathrm{char}(k)\neq 2$. Let $(M, q)$ be a quadratic space of dimension $2n$ or $2n+1$ over k. Let $(\cdot,\cdot)$: $M\times M \to k$ be the associated symmetric bilinear form $(x,y)=q(x+y)-q(x)-q(y)$. The regular elliptic numbers of $m$ of the root systems of type $B_n, D_n$ and $^{2}D_n$ are in bijection with 
\begin{itemize}
    \item Type $B_n(\operatorname{dim} M=2 n+1)$ : divisors $d \mid n$ (corresponding $\left.m=2 n / d\right)$;
\item Type $D_n(\operatorname{dim} M=2 n)$ : even divisors $d \mid n$ (corresponding $\left.m=2 n / d\right)$ or odd divisors $d \mid n-1$ (corresponding $m=2(n-1) / d$ );
\item Type ${ }^2 D_n(\operatorname{dim} M=2 n)$ : odd divisors $d \mid n$ (corresponding $\left.m=2 n / d\right)$ or even divisors $d \mid n-1$ (corresponding $m=2(n-1) / d$ ).
\end{itemize}
Let $m=2 \ell$ for some $\ell$. Fix a decomposition
\begin{align}\label{5.1}
    M=M_0 \oplus M_1 \oplus \cdots M_{\ell-1} \oplus M_{\ell} \oplus M_{\ell+1} \oplus \cdots \oplus M_{m-1}
\end{align}

where $\operatorname{dim} M_i=d$ for $i=1, \cdots, \ell-1, \ell+1, \cdots, m-1$, $\operatorname{dim} M_0$ and $\operatorname{dim} M_{\ell}$ are either $d$ or $d+1$, and we make sure that when $\operatorname{dim} M=2 n+1$, $\operatorname{dim} M_0$ is even. We see that

\begin{itemize}
\item Type $B_n: \operatorname{dim} M_0$ is even and $\operatorname{dim} M_{\ell}$ is odd;
\item Type $D_n: \operatorname{dim} M_0=\operatorname{dim} M_{\ell}$ is even;
\item Type ${ }^2 D_n: \operatorname{dim} M_0=\operatorname{dim} M_{\ell}$ is odd.
\end{itemize}

The decomposition \ref{5.1} satisfy $\left(M_i, M_j\right)=0$ unless $i+j \equiv 0 \bmod m$. The restriction of $q$ to $M_0$ and $M_{\ell}$ are denoted by $q_0$ and $q_{\ell}$. The pairing $(\cdot, \cdot)$ induce an isomorphism $M_i^* \cong M_{m-i}$.
Let $M_{+}=\oplus_{i>0} M_i$, then $M=M_0 \oplus M_{+}$ and correspondingly $q=q_0 \oplus q_{+}$.

 Let $\widetilde{\mathbf{P}}_{m} \subset G(K)$ be the stabilizer of the lattice chain 
 $$\Lambda_{m} \supset \Lambda_{m-1} \supset \cdots  \supset \Lambda_{0}$$ where
 $$\Lambda_{i}=\sum_{0 \le j \le i}M_{j} \otimes \mathcal{O}_{K} + \sum_{i < j \le m-1}M_{j}\otimes \bar{\omega}\mathcal{O}_{K}$$
 where $\bar{\omega}$ is a uniformizer of $\mathcal{O}_{F}.$ Its reductive quotient $\widetilde{L}_m$ is the subgroup of $\mathrm{O}\left(M_0, q_0\right) \times \prod_{i=1}^{\ell-1} \mathrm{GL}(M_i) \times \mathrm{O}\left(M_{\ell}, q_{\ell}\right)$ of index two consisting of $\left(g_0, \cdots, g_{\ell}\right)$ where $\operatorname{det}\left(g_0\right)=\operatorname{det}\left(g_{\ell}\right)$. 

 The subgroup $\mathbf{P}_m \subset \widetilde{\mathbf{P}}_m$, defined as the kernel of $\widetilde{\mathbf{P}}_m \rightarrow \widetilde{L}_m \rightarrow\{ \pm 1\}$ by taking the determinant of the first factor, is an admissible parahoric subgroup of $G(K)$ with $m\left(\mathbf{P}_m\right)=m$. The vector space $V_m:=V_{\mathbf{P}_m}$ is
$$
V_m=\operatorname{Hom}\left(M_1, M_0\right) \oplus \operatorname{Hom}\left(M_2, M_1\right) \oplus \cdots \oplus \operatorname{Hom}\left(M_{\ell}, M_{\ell-1}\right) .
$$
And similarly, we can arrange $V_{m}$ into a cyclic quiver
$$
\begin{tikzcd}
 & M_1 \arrow[r, "\phi_1", blue ] \arrow[dl, "\psi_0" ] & M_2 \arrow[l, "\psi_1", shift left=1.5]  \arrow[r, blue ]& \cdots \arrow[l, shift left=1.5] \arrow[r,"\phi_{\ell -1}", blue ] & M_{\ell-1} \arrow[l, "\psi_{\ell-2}",shift left=1.5]  \arrow[dr, "\phi_{\ell-1}",  shift left=1.5, blue ] & \\
  M_0 \arrow[dr, "\psi_{m-1}"]  \arrow[ur, "\phi_{0}", shift left=1.5, blue ] & & & &  & M_{\ell} \arrow[ul, "\psi_{\ell -1}"] \arrow[dl, "\phi_{\ell}",shift left=1.5, blue] \\
 & M_{m-1} \arrow[r, "\psi_{m-2}"]  \arrow[ul, "\phi_{m-1}",  shift left=1.5, blue ] & M_{m-2} \arrow[l, "\phi_{m-2}",  shift left=1.5, blue] \arrow[r] & \cdots \arrow[r, "\psi_{\ell +1}"]  \arrow[l, shift left=1.5, blue] & M_{\ell+1} \arrow[ur, "\psi_{\ell}"  ]  \arrow[l, "\phi_{\ell +1}",  shift left=1.5, blue] &
\end{tikzcd}
$$
where the involution $\tau$ sends $\left\{\psi_{i}: M_{i+1} \rightarrow M_{i}\right\}$ to $\left\{-\psi_{m-1-i}^{*}:M_{m-1-i}^{*} \rightarrow M_{m+1-i}^{*}\right\}$. The dual space $V_{m}^{*}$ is the space of $\tau$-invariant cyclic quivers with all the arrows reversed. Let $\phi_{i}: M_{i} \to M_{i+1}$ be the arrows. Then $\phi=\left(\phi_{0}, \ldots, \phi_{m-1}\right) \in V_{m}^{*}$ is stable if and only if
\begin{itemize}
	\item All the maps $\phi_i$ have the maximal possible rank;
\item  We have two quadratic forms on $M_0: q_0$ and the pullback of $q_{\ell}$ to $M_0$ via the map $\phi_{\ell-1} \cdots \phi_0: M_0 \rightarrow M_{\ell}$. They are in general position in the same sense as the symplectic and unitary cases.
\end{itemize}
 
\subsection{The local system.}\label{orthogonal local system}
The moduli stack $\operatorname{Bun}_{G}(\widetilde{\mathbf{P}_{0}}, \mathbf{P}^{+}_{\infty})$ classifies 4-tuples $$(\mathcal{E}, \mathcal{E}(-\{\infty \}),\mathcal{E}(-\{0 \}), \delta),$$ where the vector bundle $\mathcal{E}$, an increasing filtration $\mathcal{E}(-\{\infty \})$ of the fiber of $\mathcal{E}$ at $\infty$ , a decreasing filtration $\mathcal{E}(-\{0 \})$ at $0$ and a trivialization $\delta$ of $\mathcal{O}_X$ are defined in \cite[Sec. 8.3]{YunEpipe}. The group ind-scheme $\mathfrak{G}$ is the group of orthogonal automorphisms of $\left.\mathcal{E}\right|_{X-\{1\}}$ preserving all the auxiliary data specified above. Let $\lambda \in \mathbb{X}_{*}(T)$ be the dominant minuscule coweight such that $V_{\lambda}$ is the standard representation of the dual group $\widehat{G}=\operatorname{Sp}_{2n}$ or $\operatorname{SO}_{2n}$. The subscheme $\mathfrak{G}_{\le \lambda}$ consists of those $g \in \mathfrak{G} \subset G(F)$ whose entries have at most simple poles at $t=1$, and $\operatorname{Res}_{t=1} $g has rank one. \cite[Lem. 8.5 (3)]{YunEpipe} shows that the subscheme $\G_{\le \lambda}$ can be embedded as an open subscheme of the quadric $Q(q)$ in $\mathbb{P}(M)$ defined by $q=0$. Let $q_{[i,m-i]}$ be the restriction of the quadratic form $q$ to $M_{i}\oplus\cdots M_{m-i}$ that extended to $M$ by zero on the rest of the direct summands. Similar to Proposition \ref{Kl for symplectic} in the case of symplectic groups, the following proposition gives an explicit description of $\operatorname{Kl}_{\widehat{G},\bP}^{st}(\phi)$ when $G$ is split or quasi-split orthogonal.
\begin{prop}\cite[Cor. 8.7]{YunEpipe}\label{Kl for orthogonal}
 Let $\phi=\left(\phi_0, \phi_1, \ldots, \phi_{m-1}\right) \in V_m^{* \text { st }}(k)$ be a stable functional. Recall that $\mathfrak{G}_\lambda$ in this case is $Q(q)-\cup_{i=1}^{\ell} Q\left(q_{[i, m-i]}\right)$. Let $f_\phi: \widetilde{X}^{\circ} \times \mathfrak{G}_\lambda \rightarrow \mathbb{A}^1$ be given by
$$
f_\phi(x,[v])=-\frac{\left(\phi_0 v_0, v_{m-1}\right)}{q_{[1, m-1]}(v)} x-\sum_{i=1}^{\ell-1} \frac{\left(\phi_i v_i, v_{m-i-1}\right)}{q_{[i+1, m-i-1]}(v)} .
$$

Let $\pi: \widetilde{X}^{\circ} \times \mathfrak{G}_\lambda \rightarrow \widetilde{X}^{\circ}$ be the projection. Then we have an isomorphism over $\widetilde{X}^{\circ}$
$$
\mathrm{Kl}_{\widehat{G}, m}^{\mathrm{St}}(\mathbf{1}, \phi) \cong \pi_{!} f_\phi^* \mathrm{AS}_\psi[\operatorname{dim} M-2]\left(\frac{\operatorname{dim} M-2}{2}\right) .
$$
    
\end{prop}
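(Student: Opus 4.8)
The plan is to derive this from the general description of the Kloosterman sheaf recalled in \Cref{Sec2},
\[
\mathrm{Kl}_{\widehat{G}, \mathbf{P}}^{V_{\lambda}}(\chi) \cong \operatorname{Four}_{\psi}\left(f_{!}^{\prime \prime}\left(f^{\prime *} \mathcal{L}_{\chi} \otimes \operatorname{ev}_{\widetilde{1}}^{*} \mathrm{IC}_{\lambda}\right)\right)\Big|_{V_{\mathbf{P}}^{*, st}},
\]
specialized to $\mathbf{P}=\mathbf{P}_m$, $\chi=\mathbf{1}$, and $\lambda$ the minuscule coweight attached to the standard representation, following the same strategy as in the symplectic case (\Cref{Kl for symplectic}). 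The first step is to make $\mathfrak{G}_{\le\lambda}$ and $\operatorname{ev}_{\widetilde 1}$ explicit. Since $\lambda$ is minuscule, $\operatorname{Gr}_{\le\lambda}=\operatorname{Gr}_\lambda$ is the smooth quadric hypersurface in $\mathbb{P}(M)$, so $\mathrm{IC}_{\lambda}$ is the constant sheaf $\overline{\mathbb{Q}}_{\ell}$ shifted by $\dim\operatorname{Gr}_\lambda=\dim M-2$ and Tate twisted by $\tfrac{\dim M-2}{2}$; by \cite[Lem. 8.5 (3)]{YunEpipe} its preimage in $\mathfrak{G}$ cut out by the conditions of preserving the level structures at $0$ and $\infty$ is the open subscheme $\mathfrak{G}_\lambda=Q(q)-\cup_{i=1}^{\ell}Q(q_{[i,m-i]})\subset\mathbb{P}(M)$, a point of which is an isotropic line $[v]$ with $v=v_0+\cdots+v_{m-1}$, $v_i\in M_i$, subject to $q_{[i,m-i]}(v)\neq 0$ for $1\le i\le\ell$. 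With $\chi=\mathbf{1}$ the twist $\mathcal{L}_\chi$ is trivial, so the object to be pushed forward along $f''$ and Fourier transformed is just the constant sheaf on $\mathfrak{G}_\lambda$ (with this shift and twist).

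The second step is to compute the map $f''\colon\mathfrak{G}_\lambda\to V_{\mathbf{P}}=\bigoplus_{i=1}^{\ell}\operatorname{Hom}(M_i,M_{i-1})$ coming from the evaluation maps at $0$ and $\infty$ together with the projection $\mathbf{P}^{+}_{\infty}\to V_{\mathbf{P}}$, and to see how the varying Hecke point contributes the coordinate $x\in\widetilde X^{\circ}$. Pairing $f''([v])$ against a stable functional $\phi=(\phi_0,\dots,\phi_{m-1})\in V_{\mathbf{P}}^{*,st}$ and unwinding the lattice-chain bookkeeping of \cite[Sec. 8.3]{YunEpipe}: each arrow $\phi_i\colon M_i\to M_{i+1}$ contributes the term $(\phi_i v_i,v_{m-i-1})$ divided by $q_{[i+1,m-i-1]}(v)$ --- the denominator records the rescaling of the trivialization $\delta$ needed to keep the modification inside the correct step of $\Lambda_\bullet$ and to preserve the filtrations at $0$ and $\infty$ --- while the affine arrow $\phi_0\colon M_0\to M_1$, whose affine root involves the uniformizer, additionally acquires a factor $x$. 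Collecting signs this yields exactly
\[
f_\phi(x,[v])=-\frac{(\phi_0 v_0,v_{m-1})}{q_{[1,m-1]}(v)}\,x-\sum_{i=1}^{\ell-1}\frac{(\phi_i v_i,v_{m-i-1})}{q_{[i+1,m-i-1]}(v)}.
\]

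Finally I would feed this into the Fourier transform. Since $\operatorname{Four}_{\psi}$ of $f''_{!}$ of the constant sheaf is computed by the standard identity $\operatorname{Four}_{\psi}(f''_{!}\overline{\mathbb{Q}}_{\ell})|_{\phi}\cong R\Gamma_c\!\big(\mathfrak{G}_\lambda,\langle f''(-),\phi\rangle^{*}\mathrm{AS}_{\psi}\big)$ (up to the shift built into $\operatorname{Four}_\psi$), restricting to the stable point $\phi$ turns the right-hand side into $\pi_{!}f_\phi^{*}\mathrm{AS}_{\psi}$ over $\widetilde X^{\circ}$, and transporting the shift and Tate twist of $\mathrm{IC}_{\lambda}$ produces the asserted $\mathrm{Kl}_{\widehat{G},\mathbf{P}_m}^{\mathrm{St}}(\mathbf{1},\phi)\cong\pi_{!}f_\phi^{*}\mathrm{AS}_{\psi}[\dim M-2]\big(\tfrac{\dim M-2}{2}\big)$. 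I expect the main obstacle to be the middle step: pinning down the precise denominators $q_{[i+1,m-i-1]}$ and the sign and $x$-linearity of the first term, i.e. tracking the Moy--Prasad gradings and the normalization of $\delta$ that realizes the embedding $\mathfrak{G}_\lambda\hookrightarrow Q(q)$. The rest is formal and mirrors the symplectic computation, the only structural change being that the symmetric pure $2$-tensor $u\cdot v$ equipped with the symplectic form $\omega$ is replaced by a single isotropic line $[v]$ equipped with the quadratic form $q$.
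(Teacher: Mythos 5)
The paper does not prove this proposition at all: it is imported verbatim as \cite[Cor.~8.7]{YunEpipe}, so the ``paper's own proof'' is the citation. Your proposal instead sketches a derivation from the general Fourier--Deligne description of $\mathrm{Kl}_{\widehat{G},\mathbf{P}}^{V_\lambda}(\chi)$ recalled in \Cref{Sec2}, which is indeed the route Yun takes in the cited source (via the analogues of \cite[Lem.~8.5, Prop.~8.6]{YunEpipe}). The strategy is therefore the right one, and your first and third steps (minuscule $\lambda$ forces $\mathrm{IC}_\lambda$ to be the shifted, twisted constant sheaf on the quadric; the Fourier transform of $f''_!$ of a constant sheaf evaluated at $\phi$ is $R\Gamma_c$ of the Artin--Schreier pullback along $\langle f''(-),\phi\rangle$) are standard and correct.

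The genuine gap is your second step, which you yourself flag as ``the main obstacle'': the entire nonformal content of the proposition is the explicit identification of $\langle f''([v]),\phi\rangle$ with the displayed $f_\phi$, in particular (i) why the denominators are exactly $q_{[i+1,m-i-1]}(v)$, (ii) why the $\phi_0$ term and only that term is linear in $x$, and (iii) the overall sign. You assert these as the outcome of ``unwinding the lattice-chain bookkeeping'' without performing the unwinding; in Yun's treatment this occupies the explicit description of the uniformization of $\mathfrak{G}_{\le\lambda}$ and of the evaluation maps at $0$ and $\infty$, and is where the homogeneity-degree-zero property of each summand of $f_\phi$ (needed for it to descend to $\mathbb{P}(M)$) is verified. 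A further point you elide: the general formula in \Cref{Sec2} is stated for the Hecke modification at the fixed point $\widetilde{1}$, so producing a local system over all of $\widetilde{X}^{\circ}$ with the stated $x$-dependence requires letting the modification point vary and tracking how the evaluation at $\infty$ acquires the coordinate $x$; saying the affine arrow ``additionally acquires a factor $x$'' names the answer rather than deriving it. As written, the proposal is a correct plan of attack with its central computation outstanding, so for the purposes of this paper the honest course is what the authors do: cite \cite[Cor.~8.7]{YunEpipe} rather than claim an independent proof.
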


\subsection{Computation of Euler characteristic}

The goal of this subsection is to prove Theorem \ref{orthogonal} for the regular elliptic numbers of $m$ of the root systems of type $B_n, D_n, {}^2D_n$ and all parities of $\dim M$ and $d$. 

\begin{thm}\label{orthogonal}
	We have 
	$$
-\chi_{c}(\widetilde{X}^{\circ}, \mathrm{Kl}^{\operatorname{st}}_{\widehat{G}, \mathbf{P}_{m}}(\chi , \phi)) = \begin{cases} 2d &  B_n, \\ 2d &  D_n, \text{ } \omega_1 \text{ is non-degenerate,} \\ 2(d+1) &  D_n, \text{ } \omega_1 \text{ is degenerate,} \\ 2d & ^{2}D_n,\text{ } \omega_1 \text{ is non-degenerate,} \\ 2(d+1) & ^{2}D_n,\text{ } \omega_1 \text{ is degenerate.}\end{cases}
$$

\end{thm}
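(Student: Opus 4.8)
The plan is to mirror the proof of \Cref{symplectic}, with $\operatorname{Sym}^2_{\le 1}(M)$ replaced by the quadric $\mathfrak{G}_\lambda=Q(q)\setminus\bigcup_{i=1}^{\ell}Q(q_{[i,m-i]})$ and \Cref{Kl for orthogonal} in place of \Cref{Kl for symplectic}. As in the symplectic case, by the argument of \cite{Katz} the Swan conductor at $\infty$, hence $\chi_c$, does not depend on the tame character, so we take $\chi=\mathbf{1}$. \Cref{Kl for orthogonal} then reduces the statement to an Euler-characteristic computation,
\[
-\chi_c\bigl(\widetilde X^\circ,\mathrm{Kl}^{\operatorname{st}}_{\widehat{G},\mathbf{P}_m}(\mathbf{1},\phi)\bigr)=(-1)^{\dim M-1}\,\chi_c\bigl(\widetilde X^\circ\times\mathfrak{G}_\lambda,\,f_\phi^*\mathrm{AS}_\psi\bigr),
\]
where the overall sign $(-1)^{\dim M-1}$ is $+1$ for type $B_n$ (with $\dim M=2n+1$) and $-1$ for types $D_n,{}^2D_n$ (with $\dim M=2n$); tracking this sign is part of the case analysis. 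The first step is to integrate out the rotation torus $\widetilde X^\circ\cong\mathbb{G}_m$: writing $f_\phi=c(v)\,x+g(v)$ with $c(v)=-(\phi_0v_0,v_{m-1})/q_{[1,m-1]}(v)$ and $g(v)=-\sum_{i=1}^{\ell-1}(\phi_iv_i,v_{m-i-1})/q_{[i+1,m-i-1]}(v)$, all denominators being invertible on $\mathfrak{G}_\lambda$, the proper pushforward $\pi_!f_\phi^*\mathrm{AS}_\psi$ restricts to $g^*\mathrm{AS}_\psi\otimes H^*_c(\mathbb{G}_m,\overline{\mathbb{Q}}_\ell)$ (Euler characteristic $0$) over the locus $c(v)=0$ and to $g^*\mathrm{AS}_\psi[-1]$ (Euler characteristic $-1$) over $c(v)\ne 0$, since $H^*_c(\mathbb{A}^1,\mathrm{AS}_\psi)=0$. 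Since $q_{[1,m-1]}$ is nowhere zero on $\mathfrak{G}_\lambda$, the locus $c(v)=0$ is the divisor $W=\{(\phi_0v_0,v_{m-1})=0\}\subset\mathfrak{G}_\lambda$, so we obtain the orthogonal analogue of \cref{chiind},
\[
(-1)^{\dim M-1}\chi_c\bigl(\widetilde X^\circ\times\mathfrak{G}_\lambda,f_\phi^*\mathrm{AS}_\psi\bigr)=-\chi_c(\mathfrak{G}_\lambda,g^*\mathrm{AS}_\psi)+\chi_c(W,g^*\mathrm{AS}_\psi).
\]

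The second step is the inductive peeling, imitating \cref{Uind}--\cref{Wind}. One forms a chain of subvarieties of $\mathfrak{G}_\lambda$ (and of $W$) obtained by successively forgetting the $M_j$-components touched by the partial sums $g_{\le i}=-\sum_{j=1}^i(\phi_jv_j,v_{m-j-1})/q_{[j+1,m-j-1]}(v)$, and shows that each such step preserves the relevant $\chi_c$. As in the symplectic case the fibres of the forgetting maps are affine spaces — the quadric relation $q(v)=0$ together with the linear relations cutting out $\mathfrak{G}_\lambda$ being affine-linear in the forgotten coordinate once the rest is fixed — or copies of $\mathbb{G}_m$, and along the resulting $\mathbb{A}^1$-directions $g_{i+1}$ is a nonzero linear form, so the Artin--Schreier twist integrates to a constant in the Grothendieck group, as needed for both the $\mathfrak{G}_\lambda$-chain and the $W$-chain. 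The new feature relative to the symplectic computation is the quadratic denominators $q_{[i,m-i]}$: the nesting $q_{[2,m-2]}\supset q_{[3,m-3]}\supset\cdots$ must be respected by the order in which blocks are forgotten so that each $g_{i+1}$ really is linear along the relevant fibre, and the innermost step — which sees the self-pairing form $q_\ell$ on $M_\ell$ — will need a separate argument. The outcome is that $(-1)^{\dim M-1}\chi_c$ equals an alternating sum of $\chi_c$ of explicit quadrics and their pairwise intersections in a low-dimensional projective space built from the self-pairing blocks $M_0$ and $M_\ell$, where the stability hypothesis guarantees that the two quadratic forms $q_0$ and $(\phi_{\ell-1}\cdots\phi_0)^*q_\ell$ on $M_0$ are in general position. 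For $m=2$ there is no peeling; exactly as in \Cref{4.4} one instead passes to a projectivization and picks up an extra factor $\chi_c(\mathbb{G}_m,\mathrm{AS}_\psi)=-1$.

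The third step evaluates these quadric Euler characteristics, as in the proof of \Cref{symplectic} and its Lemmas \ref{echar sym square}--\ref{two quadrics}, by fibering over $\mathbb{P}(M_0)$ with fibres that are affine quadric cones ($\chi_c=1$) or smooth/singular projective quadrics. The elementary inputs are: a smooth projective quadric cut out by a rank-$r$ quadratic form has Euler characteristic $r$ when $r$ is even and $r-1$ when $r$ is odd; a quadric cone with $s$-dimensional vertex is, up to Euler characteristic, the join of $\mathbb{P}^{s-1}$ with a smooth quadric of complementary rank; and an affine quadric cone has $\chi_c=1$. The five cases of the theorem then amount to bookkeeping with these parities. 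The global sign is $+1$ for $B_n$ and $-1$ for $D_n,{}^2D_n$; $\dim M_0$ is even for $B_n$ and $D_n$ and odd for ${}^2D_n$; and ``$\omega_1$ degenerate'' is precisely the case $\dim M_0=d+1>d=\dim M_{m-1}$ (equivalently $m=2(n-1)/d$), in which the quadratic form governing the base computation acquires a one-dimensional radical and the resulting cone contributes an extra $2$, turning $2d$ into $2(d+1)$ for types $D_n$ and ${}^2D_n$; for $B_n$ the one-higher dimension of $M$ and the odd-dimensionality of $M_\ell$ compensate, so the answer is $2d$ in both sub-cases.

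The step I expect to be the main obstacle is the last one: carrying out the quadric Euler-characteristic bookkeeping uniformly over the three root-system types, both parities of $\dim M$ and $d$, and the degenerate and non-degenerate alternatives, together with the separate $m=2$ argument and the innermost peeling step involving $q_\ell$. By contrast the conceptual steps — integrating out the rotation torus and the inductive peeling — run exactly as in the symplectic case; the genuinely new input is identifying precisely when the self-pairing blocks $M_0,M_\ell$ force the intermediate quadrics to degenerate, and keeping the signs correct across all five cases.
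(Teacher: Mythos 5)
Your proposal follows the paper's proof exactly: reduce to $\chi=\mathbf{1}$ via Katz's argument, integrate out the rotation torus to land on $-\chi_c(\mathfrak{G}_\lambda,g^*\mathrm{AS}_\psi)+\chi_c(W,g^*\mathrm{AS}_\psi)$, inductively forget the $M_j$-blocks (the orthogonal analogue of the symplectic peeling) to reduce to $U_1,W_1\subset\mathbb{P}(M_0\oplus M_\ell)$, and evaluate the resulting quadric Euler characteristics case-by-case using parities of $\dim M_0,\dim M_\ell$ and the degeneracy of $\omega_1$, with the global sign $(-1)^{\dim M-1}$ tracked correctly for $B_n$ versus $D_n,{}^2D_n$. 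The one inaccuracy is the claim that $m=2$ requires a separate projectivization step as in \Cref{4.4}: unlike the symplectic $f_\phi$, whose sum runs through $i=\ell$, the orthogonal $f_\phi$ of \Cref{Kl for orthogonal} has its sum only over $1\le i\le\ell-1$, which is empty when $\ell=1$; after integrating out the rotation torus the Artin--Schreier twist is already trivial and the general formula $-\chi_c(U_1)+\chi_c(W_1)$ holds without any extra argument, and indeed the paper treats all $m$ uniformly. This is an unnecessary complication rather than a gap, and the rest of the proposal matches the paper's proof.
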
 
By a similar argument as in \cite{Katz} the Swan conductor of $ \mathrm{Kl}^{\operatorname{st}}_{\widehat{G}, \mathbf{P}_{m}}(\chi , \phi)$ at $\infty$ does not depend on $\chi$, so does the Euler characteristic of $\mathrm{Kl}^{\operatorname{st}}_{\widehat{G}, \mathbf{P}_{m}}(\chi , \phi)$. We make an inductive argument as follows.

\begin{prop}
Let $Q_i \subset \mathbb{P}(M)$ be the quadric defined by $q_{[\ell-i+1,m-\ell+i-1]}=0$. Let $U_i \subset \mathbb{P}(M_{0}\oplus_{j=\ell-i+1}^{m-\ell+i-1}M_j)$ be the projective variety defined by $Q(q)-\cup_{j=1}^{i} Q_j$ for $1\leq i \leq \ell$. Let $W_i \subset U_i$ be the quadric defined by $(\phi_{\ell-i}\cdots \phi_0 v_0, v_{m-\ell+i-1})=0$ for $1\leq i \leq \ell$. Assume $\chi =1$. We have
   \begin{align}\label{local system orthogonal}
   (-1)^{\mathrm{dim}M-2}\chi_c(\widetilde{X}^{\circ}, \mathrm{Kl}^{\operatorname{st}}_{\widehat{G}, \mathbf{P}_{m}}(1 , \phi)) &= -\chi_c(U_1)+\chi_c(W_1) 
\end{align}
\end{prop}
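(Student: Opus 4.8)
The plan is to run the same machine as in the proof of \Cref{4.3}, replacing the symplectic geometry of $\mathrm{Sym}^2_{\le 1}(M)$ by the quadric geometry furnished by \Cref{Kl for orthogonal}. Taking $\chi=\mathbf 1$ (legitimate, since as in \cite{Katz} the Euler characteristic is independent of $\chi$), that corollary gives $\mathrm{Kl}^{\operatorname{st}}_{\widehat G,\mathbf{P}_m}(1,\phi)\cong \pi_!\,f_\phi^*\mathrm{AS}_\psi[\dim M-2]$ over $\widetilde X^\circ$, so, taking compactly supported Euler characteristics and absorbing the cohomological shift into the sign,
\[
(-1)^{\dim M-2}\,\chi_c\!\left(\widetilde X^\circ,\mathrm{Kl}^{\operatorname{st}}_{\widehat G,\mathbf{P}_m}(1,\phi)\right)=\chi_c\!\left(\widetilde X^\circ\times\mathfrak G_\lambda,\;f_\phi^*\mathrm{AS}_\psi\right)=\chi_c\!\left(\mathfrak G_\lambda,\;\pi_{2,!}f_\phi^*\mathrm{AS}_\psi\right),
\]
where $\pi_2\colon\widetilde X^\circ\times\mathfrak G_\lambda\to\mathfrak G_\lambda$ is the second projection and $\widetilde X^\circ\cong\mathbb G_m$.

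First I would integrate out the rotation torus $\widetilde X^\circ$. Write $f_\phi(x,[v])=c([v])\,x+g([v])$ with $c([v])=-(\phi_0 v_0,v_{m-1})/q_{[1,m-1]}(v)$ and $g([v])=-\sum_{i=1}^{\ell-1}(\phi_iv_i,v_{m-i-1})/q_{[i+1,m-i-1]}(v)$; both are regular on $\mathfrak G_\lambda=U_\ell$ because the quadrics $q_{[1,m-1]},\dots,q_{[\ell,\ell]}$ are invertible there, and $\{c=0\}\cap U_\ell=W_\ell$. The stalk of $\pi_{2,!}f_\phi^*\mathrm{AS}_\psi$ at $[v]$ is $g^*\mathrm{AS}_\psi\otimes H^*_c(\mathbb G_m,T^*_{c([v])}\mathrm{AS}_\psi)$, where $T_s$ denotes multiplication by $s$; since $H^*_c(\mathbb G_m,\overline{\mathbb Q}_\ell)$ has Euler characteristic $0$ while $H^*_c(\mathbb G_m,\mathrm{AS}_\psi|_{\mathbb G_m})=\overline{\mathbb Q}_\ell[-1]$ has Euler characteristic $-1$ (using $H^*_c(\mathbb A^1,\mathrm{AS}_\psi)=0$), stratifying $U_\ell$ into $W_\ell=\{c=0\}$ and its complement gives
\[
(-1)^{\dim M-2}\,\chi_c\!\left(\widetilde X^\circ,\mathrm{Kl}^{\operatorname{st}}_{\widehat G,\mathbf{P}_m}(1,\phi)\right)=-\chi_c(U_\ell,g^*\mathrm{AS}_\psi)+\chi_c(W_\ell,g^*\mathrm{AS}_\psi).
\]

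The core of the argument is then an inductive descent from $(U_\ell,W_\ell)$ down to $(U_1,W_1)$, establishing
\[
\chi_c(U_{i+1},g^*\mathrm{AS}_\psi)=\chi_c(U_i,g^*\mathrm{AS}_\psi),\qquad \chi_c(W_{i+1},g^*\mathrm{AS}_\psi)=\chi_c(W_i,g^*\mathrm{AS}_\psi)
\]
for $1\le i\le\ell-1$. Passing from level $i+1$ to level $i$ one forgets the two outermost summands $M_{\ell-i}$ and $M_{\ell+i}$ that distinguish $U_{i+1}$ from $U_i$. I would verify by bookkeeping the two structural facts that make this work: on $U_{i+1}$ the restriction of $g$ does not involve $v_{\ell+i}$ at all, and it involves $v_{\ell-i}$ only through the single bilinear numerator $(\phi_{\ell-i}v_{\ell-i},v_{\ell+i-1})$, its denominator $q_{[\ell-i+1,\ell+i-1]}$ being independent of $v_{\ell-i}$. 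Hence forgetting $M_{\ell+i}$ is a projection $p\colon U_{i+1}\to U_i'$ along which $g$ is a pullback, so $p_!\,g^*\mathrm{AS}_\psi=g^*\mathrm{AS}_\psi\otimes p_!\overline{\mathbb Q}_\ell$ and the computation of $\chi_c$ is reduced to understanding $p_!\overline{\mathbb Q}_\ell$; then forgetting $M_{\ell-i}$ invokes $H^*_c(\mathbb A^1,\mathrm{AS}_\psi)=0$ applied to the linear‑in‑$v_{\ell-i}$ term of $g$, which collapses the remaining Euler characteristic onto $\{v_{\ell-i}=0\}=U_i$, on which the $j=\ell-i$ term of $g$ vanishes so that $g$ restricts to the level‑$i$ function. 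The analysis of $W_{i+1}\to W_i$ is parallel, using in addition that the Artin–Schreier twist by a further nontrivial linear function kills cohomology unless the appropriate paired coordinate also vanishes. At the bottom of the recursion, $U_1\subset\mathbb P(M_0\oplus M_\ell)$ and $W_1\subset U_1$ have all of $v_1,\dots,v_{\ell-1}$ equal to zero, hence $g|_{U_1}=0$ and $g|_{W_1}=0$; so $\chi_c(U_1,g^*\mathrm{AS}_\psi)=\chi_c(U_1)$ and $\chi_c(W_1,g^*\mathrm{AS}_\psi)=\chi_c(W_1)$, and combining with the level‑$\ell$ identity from the previous paragraph proves \eqref{local system orthogonal}.

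The step I expect to be the main obstacle is making this descent rigorous in the orthogonal setting. Unlike in \Cref{4.3}, where forgetting a summand of $\mathrm{Sym}^2_{\le 1}(M)$ has honest affine‑space fibers, here $U_{i+1}$ is an open subset of the quadric $Q(q)$, so the fibers of the forgetful projection are affine sections of $q=0$ intersected with the complements of the quadrics $Q_j$, and their isomorphism type jumps when a forgotten coordinate, or a coordinate paired to it under $(\cdot,\cdot)$, degenerates (for instance when $v_{\ell-i}$ falls into the radical of its pairing with $M_{\ell+i}$, or when $q_0(v_0)$ vanishes). The real work is therefore to stratify each projection finely enough that on every stratum the fiberwise complex $p_!\overline{\mathbb Q}_\ell$, respectively $p_!\,g^*\mathrm{AS}_\psi$, has the same compactly supported Euler characteristic as the constant sheaf on the image of that stratum — the strata carrying $\mathbb G_m$‑type fibers contributing $0$ and being themselves fibered in such tori — so that $\chi_c$ is genuinely preserved at each step. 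The structural inputs ($H^*_c(\mathbb A^1,\mathrm{AS}_\psi)=0$, multiplicativity of $\chi_c$ in fibrations, $\chi_c(\mathbb G_m)=0$ and $\chi_c(\mathbb A^N)=1$) are exactly as in the symplectic case.
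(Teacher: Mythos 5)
Your overall route is the same as the paper's: reduce via \Cref{Kl for orthogonal} with $\chi=\mathbf 1$, integrate out the rotation torus to get $-\chi_c(U_\ell,g^*\mathrm{AS}_\psi)+\chi_c(W_\ell,g^*\mathrm{AS}_\psi)$, and descend inductively to $(U_1,W_1)$ by projecting away one pair of summands at a time, with the endpoint $g|_{U_1}=0$. The one step that, as literally stated, would fail is your mechanism for the $U$-induction. You claim that after forgetting $M_{\ell+i}$, ``forgetting $M_{\ell-i}$ invokes $H^*_c(\mathbb A^1,\mathrm{AS}_\psi)=0$ applied to the linear-in-$v_{\ell-i}$ term of $g$, which collapses the remaining Euler characteristic onto $\{v_{\ell-i}=0\}=U_i$.'' But integrating $\psi\bigl((\phi_{\ell-i}v_{\ell-i},v_{\ell+i-1})/q_{[\ell-i+1,\ell+i-1]}\bigr)$ over $v_{\ell-i}\in M_{\ell-i}$ kills everything \emph{except} the locus where this linear form in $v_{\ell-i}$ vanishes identically, i.e.\ it imposes the condition that $v_{\ell+i-1}$ annihilate $\operatorname{Im}\phi_{\ell-i}$ (generically $v_{\ell+i-1}=0$) --- a condition on the \emph{paired} coordinate, not on $v_{\ell-i}$, and the resulting locus is not $U_i$. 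That mechanism is the right one for the $W$-induction (where you state it correctly), but not for $U$. In the paper the $U$-step uses no Artin--Schreier cancellation at all: the collapse onto $\{v_{\ell-i}=0\}$ comes from computing $p_!\overline{\mathbb Q}_\ell$ for the single projection $p\colon U_{i+1}\to U_i'$ forgetting $M_{m-\ell+i}$, whose fiber over a point with $v_{\ell-i}\neq 0$ is an affine quadric section minus a hyperplane section with $\chi_c=0$, and over $v_{\ell-i}=0$ is an affine space with $\chi_c=1$. Your closing paragraph (``strata carrying $\mathbb G_m$-type fibers contribute $0$'') already contains this correct argument, so the issue is a mis-attribution of mechanism rather than a missing idea; once the $U$-step is carried out through the stalkwise computation of $p_!\overline{\mathbb Q}_\ell$ and the spurious second Artin--Schreier step is dropped, your proof coincides with the paper's.
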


\begin{proof}
We assume $\mathcal{K}$ is the trivial character sheaf as in \cite[Prop 6.8]{YunEpipe}. We take the approach similar to the symplectic case.

Let $f_i: U_{\ell} \to \mathbb{A}^1$ be the function $[v] \mapsto \frac{(\phi_{i}v_{i}, v_{m-i-1})}{q_{[i+1,m-i-1]}}$ for $1 \leq i \leq \ell-1$. This function only depends on the coordinates $v_{i}, \cdots, v_{m-i}$. Let $f_{\geq \ell-i}=f_{\ell-i}+\cdots f_{\ell-1}$. 

Consider the projection $\pi_2: \mathbb{G}_m^{\mathrm{rot}} \times \mathfrak{G}_{\lambda} \to \mathfrak{G}_{\lambda}$. The stalk of $\pi_{2,!}f_{\phi}^{\ast}\mathrm{AS}_{\psi}$ over $[v]$ is 
\[
f^{\ast}_{\leq \ell}\mathrm{AS}_{\psi} \otimes \mathrm{H}^{\ast}_c(\mathbb{G}_m^{\mathrm{rot}}, T^{\ast}_{f_{0}([v])}\mathrm{AS}_{\psi}) 
\]
where $T_{f_{0}([v])}$ is the map $\mathbb{G}_m \to \mathbb{A}^1$ given by multiplication by $f_{0}([v])$. 

When $f_{0}([v])=0$ we have $\mathrm{H}^{\ast}_c (\mathbb{G}_m^{\mathrm{rot}}, T^{\ast}_{f_{0}([v])}\mathrm{AS}_{\psi})=\mathrm{H}^{\ast}_c (\mathbb{G}_m, 0^{\ast}\mathrm{AS}_{\psi})=\mathrm{H}^{\ast}_c(\mathbb{G}_m, \overline{\mathbb{Q}_{\ell}})$. When $f_{0}([v])\neq 0$, since $H_c^{\ast}(\mathbb{A}^1, \mathrm{AS}_{\psi})=0$, we have $\mathrm{H}^{\ast}_c (\mathbb{G}_m^{\mathrm{rot}}, T^{\ast}_{f_{0}([v])}\mathrm{AS}_{\psi})=\overline{\mathbb{Q}_{\ell}}[-1]$. Therefore we have
\begin{align}\begin{split}\label{chiind-orth}
(-1)^{\mathrm{dim}M-2}\chi_c(\widetilde{X}^{\circ}, \mathrm{Kl}^{\operatorname{st}}_{\widehat{G}, \mathbf{P}_{m}}(1 , \phi)) &= \chi_c(\mathfrak
    {G}_{\leq \lambda}, \pi_{2!}f_{\phi}^{\ast}\mathrm{AS}_{\psi}) \\
    &=  -\chi_c(U_{\ell}, f^{\ast}_{\geq 1}\mathrm{AS}_{\psi})+\chi_{c}(W_{\ell}, f^{\ast}_{\geq 1}\mathrm{AS}_{\psi})
    \end{split}
\end{align}

In the following we are going to show that for $1 \leq i \leq \ell-1$ we have
\begin{align}\label{Uind-orth}
    \chi_c(U_{i+1}, f^{\ast}_{\geq \ell-i} \mathrm{AS}_{\psi}) =  \chi_c(U_{i}, f^{\ast}_{\geq \ell-i+1} \mathrm{AS}_{\psi})
\end{align}
\begin{align}\label{Wind-orth}
    \chi_c(W_{i+1}, f^{\ast}_{\geq \ell-i} \mathrm{AS}_{\psi}) =  \chi_c(W_{i}, f^{\ast}_{\geq \ell-i+1} \mathrm{AS}_{\psi})
\end{align}

Let $U_i' \subset \mathbb{P}(M_{0}\oplus_{j=\ell-i}^{m-\ell+i-1}M_j)$ be defined by $Q(q)-\cup_{j=1}^{i} Q_j$. Consider the projection $p: U_{i+1} \to U_i'$ by forgetting the $M_{m-\ell+i}$ component. We have 
\[
p_{!}f_{\geq \ell-i}^{\ast}\mathrm{AS}_{\psi}  = f^{\ast}_{\geq \ell-i} \mathrm{AS}_{\psi}   \otimes p_{!}\overline{\mathbb{Q}_{\ell}}.
\]
Then we have
\[
\chi_c(U_{i+1}, f^{\ast}_{\geq \ell-i} \mathrm{AS}_{\psi}) = \chi_c(U_i', f^{\ast}_{\geq \ell-i} \mathrm{AS}_{\psi}\otimes p_{!}\overline{\mathbb{Q}_{\ell}}).
\]
Fix $[v']=[v_{0},v_{\ell-i},\cdots,v_{m-\ell+i-1}] \in U_i'$,  and we denote $q_{i} := q_{[\ell-i+1,m-\ell+i-1]}$ . The fiber of $p$ over $[v']$ is $\{ v_{m-\ell+i} | (v_{\ell-i}, v_{m-\ell+i})+q(v_{m-\ell+i})=0, (v_{\ell-i},v_{m-\ell+i})+q_i \neq 0 \}$. When $v_{\ell-i}\neq 0$, we have $H^{\ast}_c(p^{-1}([v'])) \cong H_c^{\ast}(\mathbb{G}_m)[-2d+4]$. When $v_{\ell-i}=0$, we have $H^{\ast}_c(p^{-1}([v'])) \cong \overline{\mathbb{Q}}_{\ell}[-2d-2]$. Since $U_i$ can be identified with the subscheme of $U_i'$ with $v_{\ell-i}=0$, we have 
\[
\chi_c(U_{i+1},  f^{\ast}_{\geq \ell-i} \mathrm{AS}_{\psi}) = \chi_c(U_i', f^{\ast}_{\geq \ell-i} \mathrm{AS}_{\psi}) = \chi_c(U_i, f^{\ast}_{\geq \ell-i+1} \mathrm{AS}_{\psi}).
\]

Consider the projection $p': W_{i+1} \to W_i$. We have
\[
p_{!}f_{\geq \ell-i}^{\ast}\mathrm{AS}_{\psi}  = f^{\ast}_{\geq \ell-i+1} \mathrm{AS}_{\psi}   \otimes p_{!}f_{\ell-i}^{\ast} \mathrm{AS}_{\psi}).
\]
We decompose $p$ into two steps $W_{i+1} \xrightarrow{p_1} W_i' \xrightarrow{p_2} W_i$, where $W_i' \subset \mathbb{P}(M_{0}\oplus_{j=\ell-i+1}^{m-\ell+i}M_j)$ is defined by $(\phi_{\ell-i-1}\cdots \phi_0v_0, v_{m-\ell+i})=0$. Fix $[v'']=[v_{0},v_{\ell-i+1},\cdots, v_{m-\ell+i}]$, and let $q_i:= q_i([v''])$. The fiber $p'^{-1}([v''])=\{ v_{\ell-i} | (v_{\ell-i}, v_{m-\ell+i})+q(v_{\ell-i})=0, (v_{\ell-i}, v_{m-\ell+i})+ q_i \neq 0) \}$. The function $f_{\ell-i-1}$ along the fiber is a linear function in $v_{\ell-i}$ by $f_{\ell-i}([v''])=\frac{(\phi_{\ell-i}v_{\ell-i}, v_{m-\ell+i})}{q_i}$. This the stalk of $p_{1,!}f^{\ast}_{\geq \ell-i} \mathrm{AS}_{\psi}$ at $[v'']$, which is $H^{\ast}_c(p^{-1}([v'']), f^{\ast}_{\ell-i}\mathrm{AS}_{\psi})$, vanishes when $v_{m-\ell+i} \neq 0$. Thus the stalk of $p_{2,!}p_{1,!}f_i^{\ast}\mathrm{AS}_{\psi}$ is isomorphic to $\overline{\mathbb{Q}}_{\ell}[-2d-2]$. Therefore, $p_{!}f_{\leq i+1}^{\ast}\mathrm{AS}_{\psi}$ and the constant sheaf $\overline{\mathbb{Q}_{\ell}}$ are the same in the Grothendieck group of $D_c^b(W_i)$. Thus we have 
\[
\chi_c(W_{i+1}, f_{\geq \ell-i}^{\ast}\mathrm{AS}_{\psi}) = \chi_c(W_i, f^{\ast}_{\geq \ell-i} \mathrm{AS}_{\psi}    p_{!}\overline{\mathbb{Q}_{\ell}} \mathrm{AS}_{\psi}) = \chi_c(W_i, f^{\ast}_{\geq \ell-i} \mathrm{AS}_{\psi}).
\]
Combining \cref{chiind-orth}, \cref{Uind-orth}, and \cref{Wind-orth} we get
\begin{align}
   (-1)^{\mathrm{dim}M-2}\chi_c(\widetilde{X}^{\circ}, \mathrm{Kl}^{\operatorname{st}}_{\widehat{G}, \mathbf{P}_{m}}(1 , \phi)) &= -\chi_c(U_1)+\chi_c(W_1) 
\end{align}
\end{proof}
\vspace{1em}
The proof of Theorem \ref{orthogonal} then reduces to the calculation of the compactly supported Euler characteristic of $U_1$ and $W_1$. We break these into a few lemmas listed below. Throughout the section,

\begin{itemize}
    \item $Q$ is the quadric on $\mathbb{P}(M_0 + M_\ell)$ defined by $q_0 + q_\ell=0$;
    \item $Q_1$ is the quadric on $\mathbb{P}(M_0 + M_\ell)$ defined by $q_0 =0$, $ q_\ell=0$ (this is because the whole space here is $Q(q)$ and $Q_1 \subset Q(q)$);
    \item $W_1$ is defiend by $\omega_{1} :=(\phi_\ell \cdots \phi_0(v_0), v_{\ell})=0 $ and $q_0+q_\ell=0$;
    \item The intersection $W_{1,1}:=W_1 \cap Q_1$ defined by  $\omega_{1,1}:=\omega_1 \cap Q_{1}$.
        
     \end{itemize}

\begin{lem}\label{Q_ell}
Suppose the root system is of type $B_n$. Then $$\chi(Q_1)=2d.$$
\end{lem}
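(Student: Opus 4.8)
The plan is to compute $\chi_c(Q_1)$ directly, where $Q_1 \subset \mathbb{P}(M_0 \oplus M_\ell)$ is the locus cut out by the pair of equations $q_0 = 0$ and $q_\ell = 0$. Write $[v] = [v_0 : v_\ell]$ with $v_0 \in M_0$, $v_\ell \in M_\ell$, and recall that in the type $B_n$ case $\dim M_0$ is even (equal to $d$, since for $B_n$ the distinguished block $M_0$ has dimension exactly $d$) and $\dim M_\ell$ is odd (equal to $d+1$). First I would stratify $Q_1$ according to whether $v_0 = 0$ or not. The locus $v_0 = 0$ inside $Q_1$ is the projective quadric $\{q_\ell(v_\ell) = 0\} \subset \mathbb{P}(M_\ell)$, a smooth quadric of projective dimension $\dim M_\ell - 2 = d - 1$; since $d+1$ is odd, this is an odd-dimensional smooth projective quadric of dimension $d-1$, whose compactly supported Euler characteristic is $d$ (a smooth quadric of odd projective dimension $2r-1$ has $\chi_c = 2r$; here $2r - 1 = d-1$, so $\chi_c = d$). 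On the open locus $v_0 \neq 0$, projection $[v_0 : v_\ell] \mapsto [v_0] \in \mathbb{P}(M_0)$ realizes $\{v_0 \neq 0\} \cap Q_1$ as a fibration over the quadric $\{q_0 = 0\} \subset \mathbb{P}(M_0)$, with fiber over $[v_0]$ equal to the affine cone $\{v_\ell \in M_\ell : q_\ell(v_\ell) = 0\}$ (the representative $v_0$ being pinned down only up to scalar, but that scalar freedom can be absorbed, so the fiber is literally the affine quadric cone in $M_\ell$).

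Next I would compute the two ingredients. The affine quadric cone $\{q_\ell = 0\} \subset M_\ell$ with $\dim M_\ell = d+1$ odd: its Euler characteristic is $1$ (an affine quadric cone $\{q=0\}$ in an $N$-dimensional space is a cone over the projective quadric of dimension $N-2$, so $\chi_c = 1 + \chi_c(\text{projective quadric of dim } N-2) \cdot \chi_c(\mathbb{A}^1 \setminus 0) = 1$, since $\chi_c(\mathbb{G}_m) = 0$; alternatively use the Thom-isomorphism/cone argument of \Cref{echar sym square}). The base quadric $\{q_0 = 0\} \subset \mathbb{P}(M_0)$ with $\dim M_0 = d$ even: this is a smooth projective quadric of dimension $d - 2$ (even), whose Euler characteristic is $d$ (a smooth projective quadric of even dimension $2s$ has $\chi_c = 2s + 2$; here $2s = d-2$, giving $\chi_c = d$). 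By the multiplicativity of $\chi_c$ in fibrations, $\chi_c(\{v_0 \neq 0\} \cap Q_1) = d \cdot 1 = d$. Adding the contribution $d$ from the locus $v_0 = 0$ gives $\chi_c(Q_1) = d + d = 2d$, as claimed.

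The main obstacle I anticipate is purely bookkeeping: getting the dimension parities of $M_0$ and $M_\ell$ exactly right in the $B_n$ case and applying the correct Euler-characteristic formula for smooth projective quadrics (even-dimensional quadrics contribute an extra $+2$ from the two rulings in middle degree, odd-dimensional ones do not). I would double-check this against the setup: for type $B_n$ we have $\dim M = 2n+1$ odd, $\dim M_0$ even with $\dim M_0 = d$, and $\dim M_\ell$ odd with $\dim M_\ell = d+1$ (consistent with $\dim M_0 + (m-1)d + \dim M_\ell = d + (2\ell - 1)d + (d+1) = 2\ell d + 1 = 2n+1$). A secondary subtlety is whether the quadrics $q_0$ and $q_\ell$ are nondegenerate — they are, since $(\cdot,\cdot)$ restricted to $M_0$ (resp. $M_\ell$) is nondegenerate by the block-orthogonality $(M_i, M_j) = 0$ unless $i + j \equiv 0 \bmod m$ together with the self-pairing of $M_0$ and $M_\ell$ — so the relevant projective quadrics are genuinely smooth and the standard formulas apply.
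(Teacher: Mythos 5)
Your proof is correct and takes essentially the same route as the paper: stratify $Q_1$ by $v_0 = 0$ versus $v_0 \neq 0$, read off $\chi_c$ of the odd-dimensional projective quadric in $\mathbb{P}(M_\ell)$ on the first stratum, and on the second fiber over the even-dimensional quadric in $\mathbb{P}(M_0)$ with affine-cone fibers of Euler characteristic $1$. One small caveat: you assert $\dim M_0 = d$ unconditionally for type $B_n$, but the setup only forces $\dim M_0$ even, so $\dim M_0 = d$ only when $d$ is even (if $d$ is odd the roles flip to $\dim M_0 = d+1$, $\dim M_\ell = d$); the paper handles this by a ``without loss of generality,'' and your computation in fact gives $2d$ in that case too, so nothing is lost, but it is worth flagging the branch.
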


\begin{proof}
	Without losing generality, we can assume $\dim M_0=d$, $\dim M_\ell =d+1$ where $d$ is even. We break $Q_1$ into two parts, depending on whether the projection of a point $(v_0,v_\ell)\in Q_1$ to $M_0$ is zero or not.
	
	When $v_0=0$, we have $v_\ell \ne 0.$ Call this part $Q_1^{v_0=0}$, it is just the quadric $q_\ell=0$ in $\mathbb{P}(M_{\ell}).$ Therefore the Euler characteristic of $Q_1^{v_0=0}$ is $d$. Here we do not have the primitive cohomology since the dimension of $M_{\ell}$ is odd, see \cite[Table 6.12]{YunEpipe}.
	
	When $v_0 \ne 0$, we call this part $Q_{1}^{v_0 \ne 0}$. Similar to what we did previously, we can project $Q_{1}^{v_0 \ne 0}$ onto the quadric $q_0=0$ in $\mathbb{P}(M_0)$ by sending $[(v_0,v_\ell)]$ to $[v_0]$. The fiber is a cone, so its Euler characteristic is $1$. The Euler characteristic of $Q_{1}^{v_0 \ne 0}$ is just the Euler characteristic of a quadric in $ P(M_0)$ which is $d$ since $d$ is even. The total Euler characteristic is the sum of the Euler characteristics of the two parts. \end{proof}
	
	\begin{lem}\label{three quadrics}
	Suppose the root system is of type $B_n$. Then $$\chi_c(W_{1,1}) =2d.$$
	\end{lem}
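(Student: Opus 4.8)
The plan is to compute $\chi_c$ of the variety $W_{1,1} = W_1 \cap Q_1$ inside $\mathbb{P}(M_0 \oplus M_\ell)$, cut out by the three equations $q_0 = 0$, $q_\ell = 0$ (so we are inside $Q_1$, which already forces us onto the common zero locus of $q_0$ and $q_\ell$ since on $Q(q) = \{q_0 + q_\ell = 0\}$ the condition $q_0=0$ is equivalent to $q_\ell=0$), together with the bilinear equation $\omega_{1,1} = (\phi_\ell\cdots\phi_0 v_0, v_\ell) = 0$. As in Lemmas 5.6 and 5.7, I would stratify $W_{1,1}$ according to whether the $M_0$-component $v_0$ of a point $[(v_0,v_\ell)] \in W_{1,1}$ vanishes or not, and use the fibration property of compactly supported Euler characteristic on each stratum.

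First I would handle the stratum $v_0 = 0$. There the equations $q_0 = 0$ and $\omega_{1,1} = 0$ are automatically satisfied, and $v_\ell \neq 0$, so this piece is exactly the projective quadric $\{q_\ell = 0\}$ in $\mathbb{P}(M_\ell)$. Since $\dim M_\ell = d+1$ is odd in type $B_n$ (with $\dim M_0 = d$ even), this is an odd-dimensional projective quadric, which has no primitive middle cohomology (cf. the table cited in Lemma 5.6), so its Euler characteristic is $\dim M_\ell - 1 = d$. Next, on the stratum $v_0 \neq 0$, I would project $[(v_0,v_\ell)] \mapsto [v_0]$ onto the quadric $\{q_0 = 0\} \subset \mathbb{P}(M_0)$, whose Euler characteristic is $d$ since $\dim M_0 = d$ is even. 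The key point is to identify the fiber over a fixed $[v_0]$: it is the affine variety in $M_\ell$ (after a choice of affine chart trivializing the scaling on $v_0$) cut out by the affine quadric $q_\ell(v_\ell) = 0$ together with the affine-linear equation $(\phi_\ell\cdots\phi_0 v_0, v_\ell) = 0$; since $\phi_\ell\cdots\phi_0 v_0 \neq 0$ (stability of $\phi$ forces the $\phi_i$ to have maximal rank), this linear form is nonzero, so the fiber is an affine quadric in one fewer variable, hence a cone, whose compactly supported Euler characteristic is $1$. By the fibration property the $v_0 \neq 0$ stratum contributes $1 \cdot d = d$, and adding the two strata gives $\chi_c(W_{1,1}) = d + d = 2d$.

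The main obstacle I anticipate is the fiber computation on the $v_0 \neq 0$ stratum: I need to be careful that the intersection of the affine quadric $q_\ell = 0$ with the hyperplane $(\phi_\ell\cdots\phi_0 v_0, v_\ell) = 0$ really is an \emph{affine} quadric hypersurface in the hyperplane (i.e.\ that the hyperplane is not contained in the quadric, and that the resulting quadratic form, possibly degenerate, still gives a cone with $\chi_c = 1$ by the Thom-isomorphism argument of Lemma 5.5 applied to the affine cone structure). This requires knowing that $q_\ell$ restricted to the hyperplane $v_\ell \perp \phi_\ell\cdots\phi_0 v_0$ is not identically zero — which follows from non-degeneracy of $q_\ell$ on $M_\ell$ — and then invoking the same cone argument as in Lemma 5.5 (an affine quadric cone over a projective base has $\chi_c = 1$, independent of whether the form is degenerate, as long as it is not the zero form). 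A secondary subtlety is making sure the scaling ambiguity $[(v_0,v_\ell)] \sim [(tv_0, tv_\ell)]$ is handled consistently when passing to affine fibers, but this is routine once the chart $v_0 \neq 0$ is fixed.
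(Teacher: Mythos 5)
Your proof is correct and follows essentially the same route as the paper's: the same stratification of $W_{1,1}$ by $v_0=0$ versus $v_0\neq 0$, the identification of the first stratum with the odd-dimensional quadric $\{q_\ell=0\}\subset\mathbb{P}(M_\ell)$ contributing $d$, and the projection of the second stratum onto $\{q_0=0\}\subset\mathbb{P}(M_0)$ with cone fibers of $\chi_c=1$ contributing another $d$. Your extra care about the fiber (that the cone argument gives $\chi_c=1$ even if the restricted quadric degenerates) is a harmless refinement of the paper's terser statement.
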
 

\begin{proof}
The proof is similar to \ref{Q_ell}. We split $W_{1,1}$ into two (disjoint) parts
$$W_{1,1}=W_{1,1}^{v_0=0} + W_{1,1}^{v_0 \ne 0}$$
so that the Euler characteristic is the sum of the Euler characteristics of the two parts.

When $v_0=0$, we have $v_\ell \ne 0.$ It is the quadric $q_\ell=0$ in $\mathbb{P}(M_{\ell}).$ Therefore $W_{1,1}^{v_0=0}=d$.

When $v_0 \ne 0$, project $Q_1^{v_0 \ne 0}$ onto the quadric $q_0=0$ in $\mathbb{P}(M_0)$ by sending $[v_0,v_\ell]$ to $[v_0]$. The fiber is again a cone, so its Euler characteristic is $1$. The Euler characteristic of $Q_1^{v_0 \ne 0}$ is just the Euler characteristic of a quadric in $ P(M_0)$ which is $d$.
\end{proof}
\begin{remark}
	Computations of the Euler characteristic of $Q_1$ and $\omega_{1,1}$ of type $D_n$ and $^{2}D_n$ follow the same path. The reader should be aware that the different parity of $\dim M_0 \oplus M_\ell$, $\dim M_0$, and $\dim M_\ell$ lead to differences in the Euler characteristics, see \cite[Table 6.12]{YunEpipe}. We will omit the proof of Type $D_n$ and $^{2}D_n$ and write down $\chi_c(Q_1)$ and $\chi_c(W_{1,1})$ directly from the table by Yun and the previous two lemmas. We make a summarization in the following table. In the last but two and three columns, we list the Euler Characteristic of $W_{1}$ and $W_{1,1}$.

\end{remark}

\begin{table}[tb]
    \centering
    \label{table}
    \caption{Dimension, parity and Euler characteristics}
\begin{tabular}{ |p{0.8cm}||p{1.5cm}||p{1.5cm}||p{1.2cm}||p{1.1cm}||p{1.2cm}||p{1.1cm}||p{1.2cm}||p{1.3cm}||p{2cm}|}
  \hline
 Type & $\dim M_0\oplus M_\ell$ & Parity of $\dim M_0\oplus M_\ell$ &$\dim M_0$ & Parity of $\dim M_0$  & $\dim M_\ell$  & Parity of $\dim M_\ell$ & $\chi_c(W_{1})$ & $\chi_c(W_{1,1})$ & $-\chi_{c}(\mathrm{Kl}^{\operatorname{st}}_{\widehat{G}, \mathbf{P}_{m}})$ \\
 \hline
 \hline

 $B_n$   & 2d+1   & odd & d   & even & d+1 & odd & 4d & 2d& 2d\\
  &   2d+1 &  odd  & d+1 & even & d &odd & 4d & 2d & 2d\\
 $D_n$ & 2d & even & d  & even & d&even & 0 & 2d & 2d\\
  & 2d+2 & even & d+1& even &d+1 &even &0 &2(d+1) &2(d+1)\\
 $^{2}D_n$ &2d & even&d &odd   & d &odd &0 &2d-2 &2d\\
  &  2d+2 & even & d+1  &odd &d+1 &odd & 0&2d & 2(d+1)\\
 \hline    
\end{tabular}
\end{table}

\begin{proof}[Proof of Theorem \ref{orthogonal}]

We compute the Euler characteristic $-\chi_{c}(\widetilde{X}^{\circ}, \mathrm{Kl}^{\operatorname{st}}_{\widehat{G}, \mathbf{P}_{m}}(\chi , \phi))$ when the root system is of type $B_n$ with even $d=\dim M_0$, as an example to illustrate the usage of Table 1 in the proof of Theorem \ref{orthogonal}, and note that the computations of the Euler characteristic when the root system is of type $D_n$ or $^{2}D_n$ follows the same path.

When $\dim M_0 \oplus \dim M_ell =2d+1$, $Q$ is a smooth quadric and $W_{1}$ is either the intersection of two smooth quadrics or with singularities contains a factor isomorphic to $k^{*}$, both are in $\mathbb{P}(M_0 \oplus M_\ell)$. The table in \cite[Sec. 6]{YunEpipe} plus the fact that $\chi_c(k^{*})=0$ implies
$$\chi_c(Q)=2d, \text{  } \chi_c(W_1)=4d.$$

Therefore we have $$-\chi_{c}(\widetilde{X}^{\circ}, \mathrm{Kl}^{\operatorname{st}}_{\widehat{G}, \mathbf{P}_{m}}(\chi , \phi))=-\chi_c(Q)+\chi_c(Q_1)+\chi_c(W_1)-\chi_c(W_{1,1})=2d.$$

In the case of type $D_n$ and $^{2}D_n$, $\dim M$ (hence $\dim M -2$) is always even. So (\ref{local system orthogonal}) implies that 
$$-\chi_{c}(\widetilde{X}^{\circ}, \mathrm{Kl}^{\operatorname{st}}_{\widehat{G}, \mathbf{P}_{m}}(\chi , \phi))=\chi_c(U_1)-\chi_c(W_1)=\chi_c(Q)-\chi_c(Q_1)-\chi_c(W_1)+\chi_c(W_{1,1}).$$ Therefore one has to be careful with the signs of each term when calculating the Euler characteristic.
\end{proof}

\newpage
\bibliography{Kloo_symp.bib}
\bibliographystyle{alpha}
\end{document}